\documentclass[letterpaper,11pt]{amsart}
\usepackage[utf8]{inputenc}
\usepackage[english]{babel}

\usepackage{mathtools,amsthm,amssymb,bbm,enumitem,hyperref}
\usepackage{tikz-cd}
\usepackage{comment}

\usepackage[dvipsnames]{xcolor}

\numberwithin{equation}{section}

\newcommand{\set}[1]{\left\{#1\right\}}
\newcommand{\supp}{\operatorname{supp}}
\newcommand{\ran}{\operatorname{ran}}
\newcommand{\abs}[1]{\left|#1\right|}
\newcommand{\norm}[1]{\left\lVert#1\right\rVert}
\newcommand{\Aut}{\operatorname{Aut}}
\newcommand{\Ad}{\operatorname{Ad}}
\newcommand{\Span}{\operatorname{Span}}

\newtheorem{thm}{Theorem}[section]
\newtheorem{lemma}[thm]{Lemma}
\newtheorem{prop}[thm]{Proposition}
\newtheorem{cor}[thm]{Corollary}

\newtheorem{claim}[thm]{Claim}

\newtheorem{introthm}{Theorem}
\newtheorem{introqst}[introthm]{Question}

\theoremstyle{definition}
\newtheorem{rmk}[thm]{Remark}
\newtheorem{defn}[thm]{Definition}

\newtheorem{notation}[thm]{Notation}

\title{The generator problem for $C^*$-algebras with comparison}
\author{Ben Bouwen}
\address{\hskip-\parindent Ben Bouwen, Institute of Mathematics, Czech Academy of Sciences, Prague, Czech Republic}
        \email{bouwen@math.cas.cz}
\author{Jorge Castillejos}
\address{\hskip-\parindent Jorge Castillejos, Instituto de Matem\'aticas, Unidad Cuernavaca, Universidad Nacional Aut\'onoma de M\'exico, Av.\ Universidad s/n, 62210 Cuernavaca, Morelos, México.}
\email{jorge.castillejos@im.unam.mx}
\author{Jakub Curda}
\address{\hskip-\parindent Jakub Curda, Mathematical Institute, Andrew Wiles building, Woodstock Rd, OX2 6GG, Oxford, United Kingdom} 
\email{jakub.curda@maths.ox.ac.uk} 

\thanks{The author B. Bouwen was supported by DFF grant 1054-00094B, the Czech Academy of Sciences CAS (RVO 67985840) and the Czech Science Foundation grant GF25-15403K. The author J. Castillejos was supported by UNAM-PAPIIT IN108126. The author J. Curda was supported by EPSRC grant EP/X026647/1.\\ For the purpose of Open Access, the authors have applied a CC BY public copyright license to any Author Accepted Manuscript (AAM) version arising from this submission.}
\subjclass[2020]{46L05, 46L09}

\begin{document}

\maketitle

\begin{abstract}
    We show that certain classes of unital, simple, separable $C^*$-algebras with comparison are singly generated, including tracially AF algebras and monotracial crossed products of actions on the Cantor space. In particular, we show that simple, separable, tracially AF algebras have generator rank one.
\end{abstract}

\section{Introduction}

In 1967, Kadison presented his list of problems on von Neumann algebras (see \cite{Ge03}), which had a monumental impact on the field's research directions for decades to come.
One of these problems, the so-called generator problem, asks whether every separably acting von Neumann algebra is generated by a single element.
At present, there is a large body of work devoted to this problem for broad classes of von Neumann algebras. 
We will give a more detailed account of the progress further below, but let us note that the problem has been solved in the positive in its entirety after the very recent announcement of a complete solution for the $\mathrm{II}_{1}$ factor case by the use of LLMs. All other cases had already been solved in the past years and decades, and even in the $\mathrm{II}_1$ factor setting, many classes of von Neumann algebras were already known to be singly generated.

In this paper, we study the analogue of this problem for \ensuremath{C^*}-algebras, which has garnered significant attention in recent months in particular, as will be discussed later. 
Many different formulations exist, but we will be mainly concerned with the following:
\begin{introqst}[{The generator problem for $C^*$-algebras, \cite[Question~1.2]{thielwinter} and \cite[Problem~XCVI]{questions}}] \label{introqst:generatorproblem}
    Is every unital, simple, separable \ensuremath{C^*}-algebra singly generated?
\end{introqst}
Note that separability is a necessary condition: any \ensuremath{C^{*}}-algebra generated by a single element is separable. The simplicity assumption was added to exclude elementary counterexamples in the commutative setting, as the minimum number of generators for such a $C^*$-algebra is equal to the smallest $n$ for which its spectrum embeds into $\mathbb{C}^n$.
The unitality requirement is purely for convenience -- a \ensuremath{C^{*}}-algebra is singly generated if and only if its unitization is.
Just like in the von Neumann algebraic setting, there are many results showing that various classes of \ensuremath{C^*}-algebras are singly generated. However, the question was very recently (\cite{elliottlinniu}) shown to have a negative solution in general. As the counterexample constructed in \cite{elliottlinniu} fails certain regularity properties the analogues of which are automatic in the von Neumann algebraic setting, it would be of interest to investigate the generator problem further under one or more of these regularity assumptions.

In this paper, we study the generator problem for $C^*$-algebras with strict or dynamical comparison (see Definitions \ref{def:strictcomp} \& \ref{def:dynamicalcomp}), solving it in the positive for several classes of $C^*$-algebras, including: 
\begin{enumerate}
    \item 
    simple tracially AF algebras (Theorem~\ref{main.thm});
    \item 
    simple, monotracial \ensuremath{C^*}-algebras with strict or dynamical comparison containing a regular abelian subalgebra with spectrum homeomorphic to the Cantor space (Theorem~\ref{thm:singlegeneration});
    \item 
    certain free products of \ensuremath{C^*}-algebras amalgamated over a regular copy of the continuous functions on the Cantor space (Corollary~\ref{cor:amalgams}).
    
\end{enumerate}

New examples of singly generated $C^*$-algebras coming from the results in this paper are the non-$\mathcal {Z}$-stable and non-nuclear tracially AF algebras constructed in \cite{NW21}, certain crossed products of profinite actions (see (\ref{sg_profinite}) below Corollary~\ref{cor:sg_crossedprod}), and in particular the non-nuclear Bunce--Deddens algebras recently introduced by Bell in \cite{bell} (see (\ref{sg_buncededdens}) below Corollary \ref{cor:sg_crossedprod}). 

We will discuss our results in more detail later. First, we give an overview of the current state of the art, both in von Neumann and \ensuremath{C^{*}}-algebraic settings.

\subsection{Singly generated operator algebras}
On the von Neumann algebraic side, separably acting type $\rm{I}$ and separably acting properly infinite von Neumann algebras were shown to be singly generated in \cite{Pearcy62} and \cite{Wogen69}, respectively.
In \cite{Willig74}, Willig reduced the problem to factors, thus leaving only the \ensuremath{\mathrm{II}_{1}} factor setting open. Very recently, a complete solution for this case by use of LLMs was announced and made public in \cite{openai}. As such, the generator problem for von Neumann algebras is solved in its entirety in the positive. Even before that point, many classes of separably acting \ensuremath{\mathrm{II}_{1}} factors had already been shown to be singly generated, including those containing a Cartan subalgebra (\cite{Popa85}), those with property $\Gamma$, and those with non-trivial tensor product decomposition (\cite{GePopa98}). In \cite{Shen}, Shen presented a new approach inspired by Voiculescu's free entropy dimension, covering the above classes as well as providing new examples. We refer the reader to \cite{Sri} for an overview of the results achieved before the announcement \cite{openai}.

Let us now turn to the \ensuremath{C^*}-algebraic setting.
Generalizing early results (\cite{Topping68,OlsenZame76,LiShen08}), Thiel and Winter proved in \cite[Theorem~3.8]{thielwinter} that every unital, separable, $\mathcal{Z}$-stable \ensuremath{C^*}-algebra is singly generated.
Moreover, they show that unital, separable \ensuremath{C^*}-algebras with sufficiently non-trivial tensor product decomposition are singly generated (\cite[Theorem~3.5]{thielwinter}).
An unpublished result of Kirchberg shows that any unital, separable, properly infinite \ensuremath{C^*}-algebra is singly generated (see \cite[Theorem~2.4]{thielwinter}), \cite{Thi21} shows that separable AF algebras are singly generated, and more recently, \cite{graphs} shows that separable graph \ensuremath{C^*}-algebras are singly generated.
Following Thiel and Winter's work, the focus has shifted towards \ensuremath{C^*}-algebras which are not $\mathcal{Z}$-stable. As a particular example, the reduced group $C^*$-algebra of the free group with two generators was recently shown to be singly generated in \cite{freegroup}.

Recently, the class of Villadsen algebras has garnered particular attention. These non-$\mathcal{Z}$-stable AH algebras exhibit exotic topological behavior used to provide pivotal counterexamples in $C^*$-theory, notably in the classification program for $C^*$-algebras (see \cite{Toms1} and \cite{Toms2}). When it comes to the generator problem, the result \cite{villadsen} shows that Villadsen algebras of the first type are singly generated. More recently, Question \ref{introqst:generatorproblem} was shown to have a negative answer in general by \cite{elliottlinniu}; the counterexample they construct, lies within the class of Villadsen algebras of the second type.

Shortly after, \cite{hayesWbundle} gave a negative solution for a version of the generator problem for $W^*$-bundles, a class of objects between tracial $C^*$-algebras and tracial von Neumann algebras.

Our results, featuring within the generator problem for $C^*$-algebras, reach beyond the $\mathcal{Z}$-stable setting, while maintaining a regularity assumption not satisfied by Villadsen algebras, namely the notion of comparison.

\subsection{The generator problem for tracially AF algebras}

Lin's class of tracially approximately finite dimensional (tracially AF) $C^*$-algebras was inspired by Popa's approximation techniques of $C^*$-algebras via approximately central finite-dimensional corners. 
In contrast with separable AF algebras, which ask for finite-dimensional approximations in norm, Popa's approximation condition requires the existence of finite-dimensional subalgebras with unit approximately central for a finite set and with corresponding compressions close to these finite-dimensional subalgebras.
Lin's tracial version of this idea additionally requires the complementary corner to be small in a tracial sense. 
Although tracial approximation provides excellent control over a large finite-dimensional corner, it leaves an uncontrolled complementary corner which need not be small in norm. Since generation is a norm-theoretic property, it is not clear a priori how to recover global generation from such approximations.

Tracially AF algebras formed an important early class in the classification program, where in the nuclear setting they admit strong classification theorems and encompass natural examples arising from dynamics (\cite{LP10}). At the same time, tracial finite-dimensional approximation does not rely on nuclearity, and indeed, there are non-nuclear tracially AF algebras. Thus, the tracially AF condition isolates a finite-dimensional approximation phenomenon that is interesting independently of classification. 
Our theorem shows that this approximation property has consequences for the generator problem.
In fact, we can show a stronger form of generation: tracially AF algebras have \emph{generator rank} at most one, so that single generators are in fact dense.

\begin{introthm}
    The class of simple separable tracially AF algebras has generator rank at most one. In particular, every simple separable tracially AF algebra is singly generated. 
\end{introthm}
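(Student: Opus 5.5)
We argue via the Thiel--Winter description of generator rank, as used in \cite{Thi21}: for a unital separable $A$, the set $\mathrm{Gen}(A)$ of single generators is a $G_\delta$. It therefore suffices to show that for every $a\in A$, every finite set $F\subset A$ and every $\varepsilon>0$ there is $b$ with $\norm{a-b}<\varepsilon$ such that $C^*(b)$ approximately contains $F$ within $\varepsilon$. A Baire category argument over a countable dense sequence of finite sets then gives density of $\mathrm{Gen}(A)$. So the task is a local perturbation statement, and I would build $b$ from a finite-dimensional piece plus a controlled complementary piece.

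\textbf{Step 1 (tracial approximation).} Given $a$, $F$ and $\varepsilon$, apply the tracially AF property to $F\cup\{a\}$. This gives a finite-dimensional subalgebra $D\subset A$ with unit $p$ such that $p$ almost commutes with $F\cup\{a\}$, the compressions $pxp$ are within $\varepsilon$ of elements of $D$, and $1-p$ is Murray--von Neumann equivalent to a projection in a tiny corner of $pAp$. Using simplicity and the fact that tracially AF algebras have real rank zero and strict comparison of projections, we can arrange that $1-p\lesssim e$, where $e$ is a minimal projection of a summand of $D$ that is repeated many times. In particular, $1-p$ is equivalent to a subprojection of $e$, which gives a partial isometry $v$ with $v^*v=1-p$ and $vv^*\le e$.

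\textbf{Step 2 (perturbation to a generator).} Write $a\approx d+(1-p)a(1-p)+$ off-diagonal terms, with $d\in D$. Perturb $d$ within $D$ to a normal-plus-nilpotent element $d'$ whose generated $C^*$-algebra is all of $D$; this is possible because finite-dimensional algebras have generator rank one. Then add a small multiple $\delta v$, together with the compressed corner $(1-p)a(1-p)$ and the off-diagonal terms, arranged so that $b$ is within $\varepsilon$ of $a$.

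From $b$ we recover everything by functional calculus. First, spectral separation of $d'$ recovers $p$ and the matrix units of $D$. Compressing $b$ then recovers $v$ up to scaling, since $e b (1-p)=\delta v+e(\text{stuff})$, and we choose $d'$ so that the stuff is controlled. From $v$ we transport the corner $(1-p)A(1-p)$ into $eAe\subset pAp$, where the elements of $F$ are approximated by $D$ and hence by $C^*(b)$.

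\textbf{Main obstacle.} The complementary corner $(1-p)A(1-p)$ is not small in norm. To capture $(1-p)x(1-p)$ we need $v^*(\cdot)v$ applied to elements of $eAe$ that are themselves approximately in $C^*(b)$. But $eAe$ is an arbitrary hereditary subalgebra, so we must iterate: apply tracial approximation again inside $eAe$ to the transported corner $v(1-p)F(1-p)v^*$. I would handle this with a telescoping construction that produces a sequence $D_1,D_2,\dots$ and partial isometries $v_1,v_2,\dots$ with rapidly decreasing coefficients $\delta_n$, so that $b=\lim b_n$ converges. At each stage, a fixed finite set is approximated to within $2^{-n}\varepsilon$ in $C^*(b_n)$, and later perturbations do not destroy the earlier approximations, by a stability estimate for functional calculus. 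This convergence-plus-stability bookkeeping is where I expect the real difficulty.

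Alternatively, one could avoid iteration. Choose $e$ so that $(1-p)\oplus$ a copy inside $D$ forms a matrix algebra over the corner $(1-p)A(1-p)$. Then apply the Thiel--Winter tensor-splitting result \cite[Theorem~3.5]{thielwinter} locally, using that a corner tensored with a large matrix algebra, where the matrix size exceeds the number of generators needed, has a generator. I would try this route first.
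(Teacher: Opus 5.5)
\textbf{Verdict: there is a genuine gap.} Your reduction to the local perturbation property is fine. It is Thiel's characterization, and the paper works with the same formulation for a pair of self-adjoint elements. The proposal breaks at the central step.

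\textbf{The step that fails.} After transporting the remainder by $v$, you say the elements of $eAe$ are approximated by $D$. They are not. The tracial approximation only controls the compressions $pxp$ for $x$ in your finite set. The element $v(1-p)x(1-p)v^*$ is a new element of $eAe$ about which nothing is known; for $e$ minimal you even have $eDe=\mathbb{C}e$.

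\textbf{Why the iteration cannot repair it.} Each new tracial approximation inside $eAe$ produces a new remainder that is again only tracially small. Its norm can be comparable to $\|x\|$. So at no finite stage is $x$ within $2^{-n}\varepsilon$ of $C^*(b_n)$. The limit does not help either, because norm errors of that size persist. This is exactly the obstruction ``tracially small is not norm small'', and no stability estimate for functional calculus removes it.

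\textbf{Why the alternative route does not apply as stated.} \cite[Theorem~3.5]{thielwinter} is an existence result for generators of tensor products under finite-generation hypotheses. The corner $(1-p)A(1-p)$ is not known to satisfy them. The theorem also gives no generator close to a prescribed element, and says nothing about compatibility with the part of $b$ living on $pAp$.

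\textbf{A further unjustified step.} The tracial AF condition does not by itself give $1-p\precsim e$ for a minimal projection $e$ of $D$. The bound on $1-p$ has to be fixed before $D$ is chosen.

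\textbf{The missing idea.} You need to encode the remainder exactly, with a small coefficient, rather than approximate it. The paper does this as follows.
\begin{itemize}
\item Take $v$ with $v^*v=1-f$ and $vv^*\le q_1$, and set $d=vcv^*$.
\item Add $\lambda_3(du+u^*d^*)$ to the second self-adjoint generator $y$, where $u$ is a partial isometry from $q_2$ onto $q_1$.
\item Every other term of $y$ vanishes under compression by $q_1(\cdot)q_2$. Hence $d=\lambda_3^{-1}q_1yq_2u^*$ and $(1-f)c(1-f)=v^*dv$ exactly.
\item The perturbation has norm less than $\delta$ no matter how large $\|d\|$ is.
\end{itemize}

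\textbf{What this requires.} Two things must hold:
\begin{itemize}
\item the compressions above must vanish;
\item $q_i$, $u$, $v$ must be recoverable from $x,y$. The diagonal projections come from $x$ by spectral separation, and the off-diagonal matrix units come from $z$ inside $y$.
\end{itemize}
Both need the approximants $a_0,b_0,c_0$ of the compressions to commute with auxiliary matrix units. Your plan, with a generic $d'\in D$ and $e$ minimal in $D$, does not have this.

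\textbf{How the paper achieves it.}
\begin{itemize}
\item Proposition~\ref{prop:esstracZstab} gives a corner $\bigoplus M_{n_s}\otimes C_s$ whose compressions lie near $\bigoplus M_{n_s}\otimes 1_{C_s}$.
\item Lemma~\ref{lem:M_5.tracial.approx} then yields $\bigoplus M_{n_s}\otimes M_5$.
\item In it, $q_1$ has trace about $1/5$, which exceeds $\tau(1-f)$, so strict comparison supplies $v$.
\item The remaining slots carry $u$, $w$ and $z$. The term $c_0w$ is encoded to recover $c_0f$, since the matrix units of $M_{n_s}$ are never recovered.
\end{itemize}

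\textbf{Non-unital case.} You only treat unital $A$. The non-unital case needs a separate argument: the paper passes to corners $e_nAe_n$ and uses Proposition~\ref{thm:gr.ind.limits}.
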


As shown in \cite{NW21}, there are tracially AF algebras that are both non-nuclear and non-$\mathcal{Z}$-stable. Thus, our theorem also yields new examples of singly generated algebras which are neither nuclear nor $\mathcal{Z}$-stable.

The proof of this theorem is heavily based on the additional matrix structure provided by a stronger form of the tracial approximations (see Proposition \ref{prop:esstracZstab}). 
Strict comparison is a key feature of tracially AF algebras, which allows us to produce partial isometries encoding the uncontrolled complementary corner. By carefully perturbing a given pair of self-adjoint elements, we can then recover both corners from the perturbed pair and hence obtain generators. In this way, the tracially small remainder, although potentially large in norm, can be encoded into arbitrarily small perturbations of the original pair.

\subsection{The generator problem with a Cantor spectrum subalgebra}

Our second result was inspired by Shen's approach (\cite{Shen}) mentioned previously. Given a diffuse, finite von Neumann algebra $\mathcal{M}$, he defines a quantity which can be thought of as the area of a given element of $\mathcal{M}$. Based on this, he introduces an invariant $\mathcal{G}(\mathcal{M})$, which is the smallest possible area that any set of elements generating $\mathcal{M}$ can achieve. It is then shown that any type $\mathrm{II}_1$ factor $\mathcal{M}$ is singly generated if $\mathcal{G}(\mathcal{M})$ is small enough (\cite[Theorem 4.2]{Shen}). Subsequently, this criterion is checked for several classes of type $\mathrm{II}_1$ factors to obtain single generation (\cite[Corollary 5.16]{Shen}). 

In order to apply the same philosophy to obtain analogous results in the $C^*$-setting, we first introduce the $C^*$-algebraic analogue (Definition \ref{def:area}) of the aforementioned notion of area. One of the crucial (albeit elementary) technical tools underlying Shen's approach is the ability to construct matrix algebras of arbitrary size in $\mathcal{M}$. While this holds automatically in his setting, we need some additional assumptions to obtain an analogue of this tool for $C^*$-algebras. The first is the presence of a unital, diffuse (with respect to a given trace; see Definition \ref{def:diffuse}) copy of the continuous functions the Cantor space; this guarantees the existence of arbitrarily small, mutually orthogonal projections. The second is a notion of comparison (see Definitions \ref{def:strictcomp} and \ref{def:dynamicalcomp}), which allows us to construct matrix algebras from these projections. The latter is the most fundamental one of the two.

Under these assumptions, we prove a general theorem giving a positive answer to the generator problem provided there exists a countable set of generators with small area (Theorem \ref{thm:singlegeneration}). To obtain concrete example classes using this theorem, we show that any $C^*$-algebra with a faithful trace containing $C(X)$ as a diffuse, regular subalgebra, is generated by elements with small area (Corollary \ref{cor:smallgenerators}), and that elements which commute with $C(X)$ have small area (Corollary~\ref{cor:commuting_smallarea}). As such, we obtain single generation for the following classes of $C^*$-algebras.

\begin{introthm} \label{introthm:examples}
    Let $X$ be the Cantor space. The following classes of $C^*$-algebras are singly generated:
    \begin{enumerate}
        \item crossed products $C(X) \rtimes_r G$ with a faithful trace, with $G$ a countable discrete group containing a subgroup $F$ such that the restricted action induces a simple, monotracial crossed product $C(X) \rtimes_r F$ with strict or dynamical comparison (Corollary \ref{cor:sg_crossedprod});
        \item crossed products $C(X)\rtimes_r G$ arising from profinite actions (see Remark~\ref{rmk:profinite}) on the Cantor space, and in particular the non-nuclear Bunce--Deddens algebras introduced in \cite{bell} (Examples (\ref{sg_profinite}) \& (\ref{sg_buncededdens}) below Corollary \ref{cor:sg_crossedprod});
        \item amalgamated free products over a regular copy of $C(X)$ where one factor is simple and monotracial and has strict comparison (Corollary~\ref{cor:amalgams});
        \item any tensor product of one of the above with any separable, unital $C^*$-algebra (Corollary~\ref{cor:sg_tensorprod}).
    \end{enumerate}
\end{introthm}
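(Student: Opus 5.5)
This theorem is a summary of corollaries, so I would derive each item from the general criterion, Theorem~\ref{thm:singlegeneration}. That criterion says: a simple, monotracial $C^*$-algebra with strict or dynamical comparison that contains a diffuse, regular copy of $C(X)$, and that is generated by countably many elements of small area, is singly generated. The common route is to find, in each example, a subalgebra $B$ satisfying these hypotheses. Then I use Corollary~\ref{cor:smallgenerators} (a regular diffuse $C(X)$ yields generators of small area) and Corollary~\ref{cor:commuting_smallarea} (elements commuting with $C(X)$ have small area). Together these should bound the area of a countable generating set of the larger algebra $A$. I expect the theorem is really stated so that only the ambient $A$ needs a faithful trace, with the comparison and matrix-building hypotheses holding inside $B$.

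For (1), I take $B=C(X)\rtimes_r F$. It is simple, monotracial and has comparison by hypothesis, and $C(X)$ is regular in it, normalised by the implementing unitaries $u_g$. The unique trace restricts to an $F$-invariant measure on $X$. Simplicity forces this measure to have full support, and infinitude of $X$ forces it to be atomless, so $C(X)$ is diffuse. In $A=C(X)\rtimes_r G$, the algebra $C(X)$ is still regular, since every $u_g$, $g\in G$, normalises it. The faithful trace on $A$ then lets Corollary~\ref{cor:smallgenerators} produce a countable generating set $\{f_n\}\cup\{u_g\}$ of small total area. Theorem~\ref{thm:singlegeneration} finishes the case. For (2), I check that a profinite action on the Cantor space, for instance the odometer-type actions in \cite{bell}, contains a subgroup $F$, such as a copy of $\mathbb{Z}$ acting minimally and uniquely ergodically. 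The resulting $C(X)\rtimes F$ is then a Bunce--Deddens-type algebra, which is simple, monotracial and $\mathcal{Z}$-stable, so it has strict comparison. Also the trace on the full crossed product is faithful, because the measure has full support and the trace is the canonical one on the reduced crossed product. So (2) reduces to (1).

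For (3), in $A=A_1 *_{C(X)} A_2$ with $A_1$ simple, monotracial and with strict comparison, I take $B=A_1$. I need $C(X)$ to be regular and diffuse in $A_1$, and I need a faithful trace on $A$, which should be given by the amalgamated free product of the traces compatible with the conditional expectations. The generators are the union of generators of $A_1$ and $A_2$, and since $C(X)$ is regular in both factors, Corollary~\ref{cor:smallgenerators} applies. For (4), $A\otimes D$ with $D$ separable and unital is generated by $A\otimes 1$ together with $1\otimes d_n$ for a countable dense set $\{d_n\}\subset D$. The elements $1\otimes d_n$ commute with $C(X)\otimes 1$, so by Corollary~\ref{cor:commuting_smallarea} their area can be made arbitrarily small after scaling. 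The trace hypotheses transfer via $\tau\otimes\sigma$ for a suitable state $\sigma$, or else through whatever weaker hypothesis the general theorem actually requires.

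The main obstacle is Theorem~\ref{thm:singlegeneration} itself. There one must build, from comparison and small Cantor projections, matrix units of arbitrary size, and then imitate Shen's packing of a small-area generating family into a single element. In the corollaries the delicate points are the faithfulness of the trace on the larger algebra and the regularity of $C(X)$ in the amalgamated product. For (4) in particular, I would need to check that the comparison and trace hypotheses are only required on a subalgebra, since $A\otimes D$ need not be simple or tracial.
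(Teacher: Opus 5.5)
Your outline for items (1), (3) and (4) follows the paper: Theorem~\ref{thm:singlegeneration} with $B=C(X)\rtimes_r F$, $B=A_1$ and $B\otimes 1_D$ respectively, fed by Corollaries~\ref{cor:smallgenerators} and~\ref{cor:commuting_smallarea}, up to the smaller points in the second paragraph. The genuine gap is item (2). You reduce it to item (1) by assuming the profinite action has a subgroup $F$, ``such as a copy of $\mathbb{Z}$'', acting minimally and uniquely ergodically, so that $C(X)\rtimes F$ is a $\mathcal{Z}$-stable Bunce--Deddens algebra with strict comparison. In general no such amenable subgroup exists.

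Consider the actions of Remark~\ref{rmk:profinite}, where $X=\varprojlim G/N_i$ with $N_i$ normal and $\bigcap_i N_i=\{e\}$. A cyclic subgroup $\langle g\rangle$ acts minimally if and only if it is transitive on every $G/N_i$, that is, every $G/N_i$ is cyclic. This forces $[G,G]\subseteq\bigcap_i N_i=\{e\}$. So for $G=F_2$, whose amenable subgroups are all cyclic, no amenable $F$ acts minimally. If you take $F$ non-amenable, or $F=G$, you cannot get strict comparison from $\mathcal{Z}$-stability. The paper notes that these crossed products are not $\mathcal{Z}$-stable whenever $G$ is not inner amenable, which is exactly the case of Bell's non-nuclear Bunce--Deddens algebras singled out in item (2).

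The missing idea is to use the \emph{dynamical comparison} branch of Theorem~\ref{thm:singlegeneration}, with $F=G$:
\begin{itemize}
\item minimal profinite actions themselves have dynamical comparison and are uniquely ergodic \cite{joseph};
\item freeness and minimality make $C(X)\rtimes_r G$ simple \cite{archboldspielberg};
\item freeness and unique ergodicity give a unique trace \cite{zeller-meier}, which is faithful by simplicity.
\end{itemize}
Corollary~\ref{cor:sg_crossedprod} then applies directly.

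Smaller points. Theorem~\ref{thm:singlegeneration} puts no trace hypothesis on the ambient algebra. Small area only involves $\tau|_{C(X)}$ and which $p_ixp_j$ vanish, so the ``scaling'' in your item (4) is irrelevant.
\begin{itemize}
\item In item (3) you neither need nor can generally get a faithful trace on the full amalgamated product. Your free-product trace needs conditional expectations $A_i\to C(X)$ that are not in the hypotheses. Instead, apply Corollary~\ref{cor:smallgenerators} inside each $A_i$ with its own faithful trace; these traces must restrict on $C(X)$ to the same measure as the trace of $A_1$. Note that areas of elements of $A_i$ are unchanged in the amalgamated product, and pass from the full product to any completion by surjectivity.
\item In item (4), $\tau\otimes\sigma$ is not a trace unless $\sigma$ is, and $D$ may have no traces at all. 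Only your fallback works: apply the theorem to $C(X)\otimes 1_D\subseteq B\otimes 1_D\cong B\subseteq A\otimes_\alpha D$.
\item In item (1), atomlessness does not follow from $X$ being infinite alone. You need invariance plus minimality, the latter coming from simplicity: an atom would have a finite orbit, which is a proper closed invariant set. Alternatively use Corollary~\ref{cor:diffuse}, as the paper does. You should also state that the faithful trace on $A$ restricts to the unique trace on $B$, so all areas are measured against the same diffuse measure.
\end{itemize}
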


We would like to highlight that these results extend beyond the $\mathcal{Z}$-stable setting. While the work of Matui--Sato \cite{matuisato} on the Toms--Winter conjecture implies that unital, separable, nuclear $C^*$-algebras with a unique trace and strict comparison are $\mathcal{Z}$-stable, this is false outside of the nuclear setting in general. Concrete counterexamples of this which fit within our framework, can be found in (2) and (3) of Theorem~\ref{introthm:examples}; see the discussion below Corollary \ref{cor:sg_crossedprod}. 

\subsection*{Acknowledgments} We thank Stuart White for bringing the problem of single generation to our attention and for valuable discussions. 
We would also like to thank Hannes Thiel for valuable discussions on the topic of single generation.
Part of this work was carried out during the NSF/CBMS Regional Conference in the Mathematical Sciences “Classifying amenable operator algebras,” held at TCU (2025). We are grateful to the organizers for their hospitality and for a stimulating meeting.

\subsection*{AI statement} AI was used for reference search only. All mathematical ideas and proofs are due to the human authors.

\section{Preliminaries}

\subsection{Normalizers}
In this section, we briefly discuss normalizers and some well-known results related to them. These will come into play in \S \ref{sec:cantor}.

\begin{defn}
    Let $D$ be a subalgebra of a \ensuremath{C^*}-algebra $A$. A \emph{normalizer} of $D$ in $A$ is an element $a\in A$ such that
    \begin{equation}
        aDa^*\cup a^*Da\subseteq D.
    \end{equation}
    Denote by $N_A(D)$ the set of normalizers of $D$ in $A$. Then $D$ is said to be \emph{regular} in $A$ if $\ensuremath{C^*}(N_A(D))=A$.
\end{defn}

The set $N_A(D)$ is closed under multiplication and adjoints, and contains $D$. Moreover, for any $a\in N_A(D)$, we have $a^*a\in D$. 

\begin{defn}
    Let $X$ be a compact Hausdorff space and $A$ a \ensuremath{C^*}-algebra containing $C(X)$ as a \ensuremath{C^*}-subalgebra. For a normalizer $a\in N_{A}(C(X))$, write
    \begin{equation}
    \begin{aligned}
        \supp(a)&=\{x\in X\mid a^{*}a(x)>0\},\\
        \ran(a)&=\supp(a^{*}).
    \end{aligned}
    \end{equation}
    Then $a$ is said to be \emph{clopen} if $\supp(a)$ and $\ran(a)$ are compact open subsets of $X$. We denote by $N_{A}^{c}(C(X))$ the set of clopen normalizers of $C(X)$ in $A$.
\end{defn}

In the above setting, any $a \in N_A(C(X))$ induces a partial homeomorphism $\alpha_{a}$ of $X$ from from the support of $a$ to its range. 
By \cite[Proposition~4.7]{Renault} (see also \cite[Proposition~11.1.15]{simsszabowilliams}), $\alpha_{a}$ is characterized by the equation
\begin{equation}
    a^{*}fa(x)=f(\alpha_{a}(x))a^{*}a(x)
\end{equation}
for all $f\in C(X)$ and $x\in X$. Note also that $\alpha_{a^{*}}=\alpha_{a}^{-1}$. Since $\alpha_{a}$ is a partial homeomorphism, $a$ is clopen if $\supp(a)$ or $\ran(a)$ is clopen. Moreover, any partial isometry $v\in N_A(C(X))$ is automatically clopen.

The following lemma allows us to restrict our attention to normalizing partial isometries to generate $A$ when $X$ is zero-dimensional. The results are well-known, but we provide a short proof for the convenience of the reader.

\begin{lemma}
\label{lemma:clopen-normalizers}
    
     Let $X$ be a zero-dimensional, compact, Hausdorff space, and $A$ a \ensuremath{C^*}-algebra containing $C(X)$. The set $N_{A}^{c}(C(X))$ is dense in $N_{A}(C(X))$. Moreover, any $a\in N_A^c(C(X))$ has a polar decomposition $a=v\abs{a}$ in $A$, with $\abs{a}\in C(X)$ and $v\in N_A^c(C(X))$ a partial isometry for which $v^*v=\mathbbm{1}_{\supp(a)}$ and $vv^*=\mathbbm{1}_{\ran(a)}$.
\end{lemma}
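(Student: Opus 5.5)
For density, I would fix $a\in N_A(C(X))$ and $\varepsilon>0$, and write $h=a^*a\in C(X)$. The set $K=\{x: h(x)\ge\varepsilon^2\}$ is compact and contained in the open set $\{h>\varepsilon^2/4\}$. Since $X$ is zero-dimensional, there is a clopen $U$ with $K\subseteq U\subseteq\{h>\varepsilon^2/4\}$. Set $b=a\mathbbm{1}_U$. Because $\mathbbm{1}_U\in C(X)\subseteq N_A(C(X))$ and normalizers are closed under multiplication, $b$ is a normalizer. We have $b^*b=\mathbbm{1}_U h\mathbbm{1}_U=h\mathbbm{1}_U$, so $\supp(b)=U\cap\{h>0\}=U$, which is clopen. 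By the remark preceding the lemma, $\alpha_b$ is a partial homeomorphism, so $\ran(b)=\alpha_b(U)$ is compact; being open as a support set, it is clopen, and hence $b\in N_A^c(C(X))$. Finally,
\[
\norm{a-b}^2=\norm{a(1-\mathbbm{1}_U)}^2=\norm{h(1-\mathbbm{1}_U)}\le\varepsilon^2,
\]
since $h<\varepsilon^2$ off $U$. This proves density. If $A$ is non-unital, I would interpret $1$ via the unitization, which is harmless because all products land in $A$.

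For the polar decomposition, take $a$ clopen and write $h=a^*a$, $S=\supp(a)$. Since $S$ is clopen and $h>0$ on $S$, compactness gives $\delta>0$ with $h\ge\delta$ on $S$. Hence $\abs{a}=h^{1/2}\in C(X)$, and the function $g=\mathbbm{1}_S h^{-1/2}$ lies in $C(X)$. I would define $v=ag\in A$. Then $v$ is a product of normalizers and hence a normalizer. We compute $v^*v=g h g=\mathbbm{1}_S$, so $v$ is a partial isometry. Also
\[
v\abs{a}=a\,\mathbbm{1}_S=a,
\]
because $\norm{a(1-\mathbbm{1}_S)}^2=\norm{h(1-\mathbbm{1}_S)}=0$.

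For the range projection, note that $vv^*$ is a projection in $C(X)$, since $v^*$ normalizes as well. We have $vv^*=ag^2a^*$, and its support equals the support of $aa^*$, because $g^2$ is bounded above and below on $S$ and $a=a\mathbbm{1}_S$. More concretely, $\delta'\,aa^*\le ag^2a^*\le\delta^{-1}aa^*$, using $a\mathbbm{1}_S a^*=aa^*$. So $vv^*$ is the indicator function of $\{aa^*>0\}=\ran(a)$. Clopenness of $v$ then follows from the remark that partial isometry normalizers are clopen, or directly from what we have just shown.

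The main obstacle I anticipate is in the density step: verifying that cutting down by $\mathbbm{1}_U$ produces a clopen range. This relies on the partial homeomorphism $\alpha_b$ mapping $\supp(b)$ onto $\ran(b)$, which I take from the cited characterization. Everything else is routine functional calculus in the commutative algebra $C(X)$.
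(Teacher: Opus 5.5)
Your proposal is correct and follows essentially the same route as the paper. For density you cut $a$ down by the indicator of a clopen set squeezed between a superlevel set of $a^*a$ and $\supp(a)$, and for the polar decomposition you multiply $a$ by a function in $C(X)$. Your $\mathbbm{1}_S h^{-1/2}$ gives literally the same $v$ as the paper's $f^{1/2}$, since $a(1-\mathbbm{1}_S)=0$.

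Your version is slightly more careful than the paper's in three places. The $\varepsilon^2$ threshold makes $\norm{a-b}\le\varepsilon$ hold exactly. You also spell out why $\ran(b)$ is clopen and why $vv^*=\mathbbm{1}_{\ran(a)}$.
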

\begin{proof}
    To prove the first part of the statement, let $a\in N_{A}(C(X))$ and $\varepsilon>0$. 
    Since $X$ is 0-dimensional, we can find a clopen $K\subseteq\supp(a)$ such that 
    \begin{equation}
        \sup_{x\in X\setminus K} \left| a^{*}a(x)\right| \leq\varepsilon .
    \end{equation}
    Let $p=\mathbbm{1}_{K}$. Then $ap\in N_{A}^c(C(X))$ satisfies
    \begin{equation}
    \begin{aligned}
        \lVert a-ap\rVert&\leq\varepsilon\\
        \supp(ap)&=K\cap\supp(a)=K
    \end{aligned}
    \end{equation}
    since $\alpha_{ap}=\alpha_{a}|_{K}$.

    For the second part of the statement, note that since $\supp(a)$ is clopen, there exists $\varepsilon>0$ such that $a^*a(x)\geq \varepsilon$ for all $x\in \supp(a)$.
    Let $f(x)=\frac{1}{a^*a(x)}$ for $x\in \supp(a)$ and $f(x)=1$ otherwise. 
    Notice that $f$ is a continuous non-negative function on $X$.
    Let $v=af^{1/2}\in A$.
    Then one computes that $v^*v=fa^*a=\mathbbm{1}_{\supp(a)}$ and so $v$ is a partial isometry.
    One easily checks that $v$ belongs to the normalizer, provides a polar decomposition of $a$, and the required equalities hold.
\end{proof}

We also record the following lemma. For a proof, see \cite[Proposition~1.6]{Kumjian} and \cite[Lemma~12.1.2]{Delaroche-Popa}.

\begin{lemma}
\label{lemma:npi_extends}
    Let $A$ be \ensuremath{C^*}-algebra with a faithful trace $\tau$ and containing $C(X)$, with $X$ a zero-dimensional, compact, Hausdorff space. Write $\mathcal{M} := \pi_\tau(A)''$ and write $ L^{\infty}(X,\mu_\tau):=\pi_\tau(C(X))''\subseteq \mathcal{M} $ with $\pi_\tau$ the GNS representation associated to $\tau$ and $\mu_\tau$ the measure on $X$ induced by $\tau$. For any $v\in N_A(C(X))$, there exists a unitary $u\in N_{\mathcal{M}}(L^{\infty}(X,\mu_\tau))$ such that $vv^{*}uv^{*}v=v$.
\end{lemma}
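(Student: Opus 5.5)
First reduce to $v$ a normalizing partial isometry. By Lemma~\ref{lemma:clopen-normalizers}, a clopen normalizer $a$ has polar decomposition $a=w\abs{a}$ with $w\in N^c_A(C(X))$ a partial isometry and $\abs{a}\in C(X)$. The identity $vv^*uv^*v=v$ only makes sense as stated when $v^*v$ and $vv^*$ are projections, so I read the statement with $v$ a normalizing partial isometry (automatically clopen). Then $p:=v^*v=\mathbbm{1}_{\supp(v)}$ and $q:=vv^*=\mathbbm{1}_{\ran(v)}$ are projections in $C(X)$. The partial homeomorphism $\alpha_v\colon\supp(v)\to\ran(v)$ is a homeomorphism between clopen sets, and $v f v^*=(f\circ\alpha_v^{-1})\,q$ for $f\in C(X)$.

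The plan is to extend the partial isometry $\pi_\tau(v)$ to a unitary normalizer of $L^\infty(X,\mu_\tau)$ in $\mathcal{M}$.

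First, $\alpha_v$ is measure preserving for $\mu_\tau$: for a clopen $E\subseteq\supp(v)$,
\[
\mu_\tau(\alpha_v(E))=\tau(v\mathbbm{1}_Ev^*)=\tau(\mathbbm{1}_Ev^*v)=\mu_\tau(E)
\]
by the trace property. Clopen sets generate the Borel $\sigma$-algebra, so this passes to all Borel sets. Consequently $\mu_\tau(X\setminus\supp(v))=\mu_\tau(X\setminus\ran(v))$.

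The key step is to find a partial isometry $w\in N_{\mathcal M}(L^\infty)$ with $w^*w=1-p$ and $ww^*=1-q$. This is the classical fact that in a finite von Neumann algebra, equivalent projections have equivalent complements, and that in the normalizer setting the implementing partial isometry can be chosen to normalize the MASA. That is exactly the content of \cite[Proposition~1.6]{Kumjian} and \cite[Lemma~12.1.2]{Delaroche-Popa}, which I would cite.

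If one wants to argue directly, I would try a Hopf-type/exhaustion argument on $L^\infty$. Consider the partial transformation $\alpha_v$ and its orbits. Points of $X\setminus\ran(v)$ start forward chains $x,\alpha_v(x),\alpha_v^2(x),\dots$. Measure preservation forces a.e. such chain to terminate in $X\setminus\supp(v)$ after finitely many steps. Define $\beta$ on $X\setminus\supp(v)$ by mapping each chain endpoint back to its starting point. Any points not on such chains have bi-infinite or periodic orbits, and there $\alpha_v$ is already invertible. Implementing $\beta$ by a partial isometry requires the orbit-equivalence machinery, namely words in $v,v^*$ multiplied by projections in $L^\infty$.

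In practice, I would piece $w$ together from countably many pieces $\mathbbm{1}_{F_n}(v^*)^{k_n}\mathbbm{1}_{E_n}$, where $E_n$ is the set of chain endpoints with chain length $k_n$. This yields a strongly convergent sum of normalizing partial isometries with orthogonal supports and ranges.

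Finally, set $u=\pi_\tau(v)+w$. Supports and ranges are orthogonal, so $u$ is a unitary, and it normalizes $L^\infty$ because both summands do and the cross terms vanish. Moreover $q\,u\,p=vv^*(v+w)v^*v=v$, which is the required identity $vv^*uv^*v=v$.

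The main obstacle is the construction of $w$: showing that chains terminate almost everywhere and that the resulting map is implemented by a genuine normalizing partial isometry. I expect to rely on the cited references for this step.
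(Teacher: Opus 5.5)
Your proposal is correct, and reading $v$ as a normalizing partial isometry is right. For a general normalizer the identity fails: for $v=2\cdot 1$ it forces $u=\tfrac18$. The lemma is also only used for partial isometries, in Lemma~\ref{lemma:lifting-projections-v2}.

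The paper gives no argument of its own. It simply refers to \cite[Proposition~1.6]{Kumjian} and \cite[Lemma~12.1.2]{Delaroche-Popa}, which is exactly where you defer the key step, so there you coincide with the paper. What you add is the chain construction. This is actually a complete, self-contained proof, and it needs no orbit-equivalence machinery. Write $p=v^*v$ and $q=vv^*$. Set $g_k=(1-q)(v^*)^k(1-p)v^k$, the starting points of chains of length exactly $k$, and $e_k=v^kg_k(v^*)^k$, their endpoints.
\begin{itemize}
\item These are projections in $C(X)$, so each $w_k=(v^*)^ke_k$ is a normalizing partial isometry already in $A$, with $w_k^*w_k=e_k$ and $w_kw_k^*=g_k$.
\item The $g_k$ are pairwise orthogonal under $1-q$, and the $e_k$ are pairwise orthogonal under $1-p$. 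Only the sum $w=\sum_k w_k$ needs the strong closure.
\item Your measure-preservation step gives $ww^*=1-q$: the translates $v^jg_\infty(v^*)^j$ of the non-terminating part $g_\infty=(1-q)-\sum_k g_k$ are pairwise orthogonal with equal trace, so $g_\infty=0$.
\item You left implicit that almost every point of $X\setminus\supp(v)$ is a chain endpoint. This follows at once: $w^*w\le 1-p$ and $\tau(w^*w)=\tau(1-q)=\tau(1-p)$, so faithfulness of the normal extension of $\tau$ to $\mathcal{M}$ gives $w^*w=1-p$.
\end{itemize}
Working at this projection level also removes your Borel $\sigma$-algebra step. As stated, that step needs $X$ metrizable, whereas the lemma allows any zero-dimensional compact Hausdorff $X$. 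Your final verification is fine: $u=\pi_\tau(v)+w$ is unitary, normalizes $L^\infty(X,\mu_\tau)$, and satisfies $qup=\pi_\tau(v)$.

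One caution about how you frame the key step. You describe choosing the implementing partial isometry to normalize ``the MASA''. However, $L^\infty(X,\mu_\tau)$ need not be maximal abelian in $\mathcal{M}$ in this generality. Examples are $A=C(X)\otimes M_2$, or the non-free actions of $G$ permitted in Corollary~\ref{cor:sg_crossedprod}, where the lemma is genuinely used. Your chain construction never uses maximality. So it is the more robust justification, and it gives a self-contained proof where the paper has only a citation.
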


\subsection{Traces and comparison}

By the word trace, we will always implicitly mean a tracial state, and by ``monotracial'' or ``with a unique trace'' we will always mean having a unique tracial state.

Recall that for two positive elements $a$ and $b$ of a $C^*$-algebra $B$, $a$ is said to be \emph{Cuntz subequivalent} to $b$ if for every $\varepsilon >0$, there exists $r \in B$ such that $\|r^*br - a\| <\varepsilon$; we denote this by $a \precsim b$. Two projections $p,q\in B$ are called \emph{Murray-von Neumann subequivalent} if there is an element $v\in B$ such that $v^*v=p$ and $vv^*\leq q$. We will also denote this by $p\precsim q$, as this is equivalent to Cuntz subequivalence for projections (see \cite[Lemma~2.8]{gardellaperera}).

\begin{defn} \label{def:strictcomp}
    A $C^*$-algebra $B$ with non-empty tracial state space $T(B)$ is said to have \emph{strict comparison (of positive elements by bounded traces)} if for arbitrary positive elements $a,b\in (B\otimes \mathcal{K})_+$, we have 
    \begin{equation}
        d_\tau(a) < d_\tau(b) \ \forall \tau\in T(B) \implies a \precsim b,
    \end{equation}
    where
    \begin{equation}
        d_\tau(a) := \lim_{k\to\infty} \tau(a^{1/k}) \quad (\tau \in \tau(B)),
    \end{equation}
    where by abuse of notation we also write $\tau$ for the non-normalized extension of $\tau$ to $B\otimes \mathcal{K}$.
\end{defn}

Note that for a projection $p \in B$, we have $d_\tau(p) = \tau(p)$, so strict comparison for projections says exactly that Murray-von Neumann subequivalence can be detected by strict inequality on all traces of the respective projections. In fact, the precise statement we will use for the entirety of this paper is the following: if for two projections $p, q \in B$, we have $\tau(p) < \tau(q)$ for all $\tau \in T(B)$, then $p\precsim q$.

The results in \S \ref{sec:cantor} also pertain to the following setting, a version of dynamical comparison for abelian subalgebras which resembles \cite[Definition~1.5]{kopsacheiliswinter}.
In our case, the abelian subalgebra is not assumed to be regular.

\begin{defn} \label{def:dynamicalcomp}
    Let $B$ be a \ensuremath{C^*}-algebra with non-empty trace space and $D$ an abelian subalgebra of $B$.
    We say that $D$ has \emph{dynamical comparison} relative to $B$ if for any two projections $p,q\in D$ such that $\tau(p)<\tau(q)$ for all $\tau\in T(B)$, we have $p\precsim q$ by a partial isometry $v\in N_B(D)$.
\end{defn}

For our purposes, the spectrum of the subalgebra will always be 0-dimensional. Note that the definition of dynamical comparison for 0-dimensional dynamical systems from \cite[Definition~3.1~and~Proposition~3.5]{kerr} implies ours.

The following notion and subsequent results will be used in \S \ref{sec:cantor}.

\begin{defn}[\cite{PedersenDiffuse} \& \cite{ThielDiffuse}] \label{def:diffuse}
    A trace $\tau$ on a $C^*$-algebra $A$ is said to be \emph{diffuse} if the associated GNS completion $\pi_\tau(A)''$ is a diffuse von Neumann algebra, i.e.\ has no minimal projections.

    If $D$ is a $C^*$-subalgebra of $A$, we say $\tau$ is \emph{diffuse on $D$} if $\tau|_D$ is diffuse.
\end{defn}

In the particular case where $A$ is abelian, a trace $\tau$ on $A$ is diffuse if and only if the induced measure on the spectrum of $A$ is diffuse, i.e.\ has no atoms (see for example \cite[Example~3.3]{ThielDiffuse}).
The next lemma is likely well-known. 
We were not able to locate a precise reference so we provide a short proof.

\begin{lemma}
\label{lemma:no-atoms-in-vna}
    Let $(\ensuremath{\mathcal{M}},\tau) $ be a \ensuremath{\mathrm{II}_{1}} factor and $A\subseteq \ensuremath{\mathcal{M}} $ a regular, abelian subalgebra.
    Then $A$ is either finite dimensional or diffuse.
\end{lemma}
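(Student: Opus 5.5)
Presumably $A$ here is a von Neumann subalgebra (or at least its weak closure is the relevant object); I will work with a maximal-abelian-type argument via atoms. Suppose $A$ is not diffuse, so $A$ contains a minimal projection $p$ (minimal in $A$, i.e.\ $pAp=\mathbb{C}p$). The plan is to show that then the atoms of $A$ exhaust the identity and are finitely many, which forces $A$ to be finite dimensional.

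\textbf{Step 1: normalizers permute atoms.} For a normalizer $u$ of $A$ (one may take unitaries in the von Neumann setting, or partial isometries obtained by polar decomposition as in Lemma~\ref{lemma:clopen-normalizers}), the element $upu^*$ lies in $A$ and is again a projection (or zero), and it is minimal in $A$: if $q\le upu^*$ with $q\in A$ a projection, then $u^*qu\le p$ lies in $A$, so $u^*qu\in\{0,p\}$, whence $q\in\{0,upu^*\}$. Thus normalizers map atoms to atoms or to zero.

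\textbf{Step 2: the atomic part is central.} Let $z$ be the supremum in $A$ of all atoms of $A$. By Step 1, $uzu^*\le z$ for all normalizers $u$ (and likewise for $u^*$), so $z$ commutes with every normalizer; it also commutes with $A$. Since $A$ is regular, normalizers generate $\mathcal{M}$, so $z$ is central in $\mathcal{M}$. As $\mathcal{M}$ is a factor and $z\ge p\neq 0$, we get $z=1$. Hence $A$ is purely atomic: $1=\sum_i p_i$ for mutually orthogonal atoms $p_i$.

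\textbf{Step 3: finiteness.} Here I expect the main point to be ensuring the atoms have equal trace, using transitivity. Consider for a fixed atom $p$ the sum $z_p$ of all atoms $q$ of the form $upu^*$ (reachable from $p$ by normalizers, including products). Exactly as in Step 2 this projection is invariant under normalizers, hence central, hence equal to $1$. Each reachable atom $upu^*$ is Murray–von Neumann equivalent to $p$ via the partial isometry $up$, so $\tau(upu^*)=\tau(p)>0$ by faithfulness. Since $\tau(1)=1$ and the atoms are orthogonal with common trace $\tau(p)$, there are at most $1/\tau(p)$ of them. So $A=\bigoplus_i \mathbb{C}p_i$ is finite dimensional. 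The subtle points are that $upu^*$ might be nonzero but a priori only a subprojection issue (handled in Step 1) and that regularity is used in the von Neumann sense (normalizers generate $\mathcal{M}$ weakly), which suffices since the commutant of the normalizers is then the center.
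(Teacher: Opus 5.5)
Your proof is correct. Its finiteness part (Step 3) is essentially the paper's argument. The paper lets unitary normalizers permute the set $P$ of minimal projections and notes that the indicator of any invariant subset is central, hence $0$ or $1$. So the action is transitive and trace-preserving, which forces $P$ to be finite.

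You differ in how the mixed case is ruled out. The paper writes $A=L^\infty(X)\oplus\ell^\infty(Y)$ and uses regularity and factoriality to find a nonzero normalizing partial isometry $v\in p\mathcal{M}(1-p)$, with $p$ the unit of the diffuse part. It then pulls back a subprojection $q\le vv^*$ of $L^\infty(X)$ to the projection $v^*qv\in\ell^\infty(Y)$. Because $\tau(q)$ was chosen outside the countable set of traces of projections in $\ell^\infty(Y)$, this is a contradiction.

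You instead observe that normalizers carry atoms to atoms or to zero. Hence the supremum $z$ of all atoms is normalizer-invariant, so it is central and equal to $1$. This is shorter and uses neither the trace nor any countability. In fact your Step 3 alone already does everything. From $z_p=1$, every nonzero projection of $A$ dominates an atom, and every atom $q'$ satisfies $q'=\sum_{q\in R}q'q$, so it lies in the orbit $R$ of $p$. Thus Step 2 is redundant.

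One point should be tidied. Step 1 is stated only for unitary or partial-isometry normalizers, but Step 2 invokes that ``normalizers generate $\mathcal{M}$''. You can close this in either of two ways:
\begin{itemize}
\item cite that unitary normalizers already generate $\mathcal{M}$ (the paper uses Delaroche--Popa's lemma for exactly this); or
\item note that Step 1 works for an arbitrary normalizer $a$.
\end{itemize}
For the second option: $pa^*ap=\lambda p$, so $apa^*$ is either zero or $\lambda$ times an atom. This gives $(1-z)az=0$. Applying the same to $a^*$ gives $za(1-z)=0$, and hence $az=za$.
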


\begin{proof}
    We first show that $A$ is either atomic or diffuse. Arguing by contradiction, we suppose that $A$ is neither so that $A=L^\infty(X)\oplus \ell^\infty(Y)$ for a diffuse measure space $X$ and a non-empty, countable set $Y$.
    Denote by $p=1_{L^\infty(X)}$ the unit of $L^\infty(X)$.
    Then there is a nonzero partial isometry $v\in N_\mathcal{M}(A) \cap p \mathcal{M}(1-p)$, as otherwise $N_{\mathcal{M}}(A)$ would be contained in the proper von Neumann subalgebra $p\mathcal{M} p\oplus (1-p)\ensuremath{\mathcal{M}} (1-p)$ of $\mathcal{M}$, contradicting regularity of $A$.
    Let 
    \begin{equation}
        T:=\left\{ \tau(p) : p\in \ell^\infty(Y)\text{ is a projection} \right\}, 
    \end{equation}
    which is a countable set.
    Fix $s \in \mathbb{R}\setminus T$ such that $0<s< \tau(vv^*)$ and find $q\in L^\infty(X)$ such that $\tau(q)=s$ and $q\leq vv^*$.
    Then $v^*qv\leq 1-p$ is a projection of trace $\tau(v^*qv) = \tau(q) =s$ which, since $v$ normalizes $A$, satisfies
    \begin{equation} 
    v^*qv\in A \cap (1-p)\mathcal{M}(1-p) = \ell^\infty(Y).
    \end{equation}
    This contradicts the definition of $T$ and so $A$ is either diffuse or atomic.

    Now suppose that $A$ is atomic, and let $P$ be the (countable) set of minimal projections in $A$.
    Consider the action $\alpha: U(N_{\mathcal{M}}(A))\curvearrowright P$ by unitary conjugation. As $P$ is countable, $\alpha$ is a permutation of $P$.
    Let $S\subseteq P$ be an arbitrary $\alpha$-invariant subset of $P$.
    Then for all $u\in U(N_{\mathcal{M}}(A))$, $\alpha(u)$ fixes $S$, so $u$ commutes with the indicator function $\ensuremath{\mathbbm{1}}_{S}$ of $S$.
    By regularity of $A$ and the fact that unitary normalizers also generate $\mathcal{M}$ in this case by \cite[Lemma~12.1.2]{Delaroche-Popa}, it follows that $\ensuremath{\mathbbm{1}} _S$ is central in $\ensuremath{\mathcal{M}} $ and thus either 0 or 1.
    Consequently, the action $\alpha$ is transitive on $P$ and since it is furthermore trace-preserving, $\tau(p)=\tau(q)$ for all $p,q\in P$.
    Hence $P$ must be finite and so $A$ is finite-dimensional.
\end{proof}

\begin{cor}
\label{cor:diffuse}
    Let $B$ be a simple $C^*$-algebra containing $C(X)$ as a regular $C^*$-subalgebra, with $X$ an infinite, compact Hausdorff space. Any trace $\tau$ on $B$ is diffuse on $C(X)$.
\end{cor}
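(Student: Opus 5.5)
The plan is to reduce to a purely atomic trace whose atoms form a single orbit, and then use that normalizing partial isometries preserve atom weights. Throughout, let $\tau$ be a trace on $B$ and $\mu=\mu_\tau$ the induced Borel probability measure on $X$. By the remark after Definition~\ref{def:diffuse}, I must show that $\mu$ has no atoms.

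\textbf{Step 1: atoms move along partial homeomorphisms.} For $a\in N_B(C(X))$ and $f\in C(X)$, the trace property gives $\tau(a^*fa)=\tau(faa^*)$. Combined with $a^*fa(x)=f(\alpha_a(x))\,a^*a(x)$, this yields
\begin{equation}
(\alpha_a)_*\bigl(a^*a\cdot\mu\bigr)=aa^*\cdot\mu .
\end{equation}
Evaluating on singletons gives, for $y\in\supp(a)$,
\begin{equation}
\mu(\{\alpha_a(y)\})\,aa^*(\alpha_a(y))=\mu(\{y\})\,a^*a(y).
\end{equation}
Two consequences follow. First, $\alpha_a$ maps atoms to atoms. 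Second, if $v$ is a normalizing partial isometry, then $v^*v$ and $vv^*$ are indicator functions, so $\mu(\{\alpha_v(y)\})=\mu(\{y\})$.

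I will only need partial isometries. Lemma~\ref{lemma:clopen-normalizers} does not apply verbatim, since $X$ need not be zero-dimensional. Instead, I will work with the orbit relation generated by all $\alpha_a$, and use the weighted identity above restricted to $y$ with $a^*a(y)=aa^*(\alpha_a(y))$. Alternatively, I would first normalize $a$ by $(a^*a)^{-1/2}$ on the relevant point. This is a technical point to be checked.

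\textbf{Step 2: cut down to the atomic part and then to one orbit.} Suppose $\mu$ has an atom. Let $p\in\pi_\tau(C(X))''$ be the indicator of the (countable) set of atoms, and fix an atom $x$. Let $q\le p$ be the indicator of its orbit
\begin{equation}
O=\{\alpha_{a_1}\cdots\alpha_{a_n}(x): a_i\in N_B(C(X))\},
\end{equation}
which is countable by Step~1. Step~1 gives $\pi_\tau(a)q=q\pi_\tau(a)$ for every normalizer $a$, by comparing both sides on $\ell^2$-vectors supported at atoms. Since $C^*(N_B(C(X)))=B$, the projection $q$ commutes with $\pi_\tau(B)$. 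Hence $b\mapsto q\pi_\tau(b)$ is a nonzero representation of $B$, and by simplicity it is faithful.

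The normalized functional $\tau'=\tau(q\,\cdot)/\tau(q)$ is then a trace on $B$ whose measure $\mu'$ is supported on the countable set $O$. Faithfulness of the representation on $C(X)$ forces $\overline{O}=\supp\mu'=X$.

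\textbf{Step 3: contradiction.} Apply Step~1 to $\mu'$. All points of $O$ lie in one orbit, and the weights are invariant along the orbit. Since $\mu'(O)=1$ and every point of $O$ carries the same positive mass, $O$ is finite. Then $X=\overline{O}$ is finite, contradicting the hypothesis. Hence $\mu$ is diffuse.

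\textbf{Main obstacle.} The step I expect to be hardest is the weight-invariance in Step~1 for general normalizers, which are not partial isometries. The weighted identity only says that $\mu(\{\alpha_a(y)\})/\mu(\{y\})=a^*a(y)/aa^*(\alpha_a(y))$. I expect to fix this by applying the identity to $a\,g$ with $g\in C(X)$ chosen so that $(ag)^*(ag)$ and $(ag)(ag)^*$ agree at the relevant points. The key fact is that $aa^*(\alpha_a(y))=a^*a(y)$ always holds for normalizers, because $aa^*a=a\,(a^*a)$ and $a^*\,(aa^*)\,a=(a^*a)^2$. This should give exact invariance of atom weights.
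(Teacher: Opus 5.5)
Your proof is correct, and it takes a genuinely different route from the paper's.

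\textbf{The paper's route.} The paper passes to $\mathcal{M}=\pi_\tau(B)''$ and asserts that it is a $\mathrm{II}_1$ factor containing $L^\infty(X,\mu_\tau)$ as a regular subalgebra. It then invokes Lemma~\ref{lemma:no-atoms-in-vna}. That lemma first excludes a mixed atomic/diffuse decomposition by a countability-of-trace-values argument. It then shows the atoms form a single orbit of equal trace, and hence are finitely many.

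\textbf{Your route.} You stay on $X$ throughout. Substituting $f=aa^*$ into $a^*fa=(f\circ\alpha_a)\,a^*a$ and cancelling $a^*a(y)>0$ gives $a^*a(y)=aa^*(\alpha_a(y))$ on $\supp(a)$. This turns your weighted formula into exact invariance $\mu(\{\alpha_a(y)\})=\mu(\{y\})$ for every normalizer. So the hedges in Step~1 and the $ag$ manoeuvre in your last paragraph are unnecessary. Your ``main obstacle'' is already resolved by the identity you wrote down. The orbit $O$ of an atom $x$ therefore has at most $1/\mu(\{x\})$ points. Simplicity of $B$ makes $b\mapsto q\pi_\tau(b)$ faithful, which forces $\overline{O}=X$, so the passage to $\tau'$ is not needed either.

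\textbf{What each route buys.} Your argument replaces factoriality of $\mathcal{M}$ by simplicity of $B$. Factoriality holds only when $\tau$ is an extremal trace. So your proof covers every trace, as the statement claims. The paper's argument as written covers the extremal (e.g.\ monotracial) case, which is the one used later. The paper's route in turn isolates a clean von Neumann algebraic lemma of independent interest.

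\textbf{One step to tighten.} The commutation $q\pi_\tau(a)=\pi_\tau(a)q$ does not come from weight invariance. It comes from $O$ being invariant under every $\alpha_a$ and $\alpha_{a^*}=\alpha_a^{-1}$, so that $\alpha_a^{-1}(O)=O\cap\supp(a)$. Extend $a^*fa=(f\circ\alpha_a)a^*a$ by normality to $f=1_O$, obtaining $a^*qa=1_{O\cap\supp(a)}a^*a$ and $a^*(1-q)a=1_{\supp(a)\setminus O}a^*a$. Then $\|qa(1-q)\|^2=\|(1-q)a^*qa(1-q)\|=0$, and similarly $(1-q)aq=0$. Your phrase about comparing on $\ell^2$-vectors supported at atoms should be replaced by an argument of this kind, since $\pi_\tau(a)$ acts on $L^2(B,\tau)$, not on $L^2(X,\mu)$.
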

\begin{proof}
As $B$ contains $C(X)$ as a regular $C^*$-subalgebra, it follows that $\mathcal{M} := \pi_\tau(B)''$ is a $\mathrm{II}_1$ factor containing $L^\infty(X,\mu_{\tau})$ as a regular von Neumann subalgebra, with $\pi_\tau$ the GNS representation of $B$ associated to $\tau$ and $\mu_\tau$ the measure on $X$ induced by $\tau$. From the previous lemma, it then follows that $L^\infty(X,\mu_{\tau})$ is either finite-dimensional or diffuse. However, it cannot be finite-dimensional, as $\tau$ is faithful and $X$ is infinite. This concludes the proof.
\end{proof}

\subsection{Generator rank}

In \cite{Thi21}, Thiel introduced the notion of generator rank of \ensuremath{C^*}-algebras. It provides a general framework to deduce single generation of $\ensuremath{C^*}$-algebras.
	
\begin{defn}[{\cite[Definitions 2.1, 3.1]{Thi21}}]
		Let $A$ be a $\ensuremath{C^*}$-algebra. Then $gr_0(A)$ is defined as the smallest integer $n \geq 0$ such that for every $a_0, \ldots, a_n \in A_{sa}, \varepsilon> 0$ and $c\in A$, there exist $b_0, \ldots, b_n \in A_{sa}$ such that 
		\begin{equation}
			\mathrm{dist}(c, \ C^*(b_0, \ldots, b_n))< \varepsilon, \qquad \| b_i - a_i \| < \varepsilon, \quad i=0, \ldots, n.
		\end{equation}
		If no such $n$ exists, then $\mathrm{gr}_0(A) = \infty$. 
		The \emph{generator rank} of $A$ is defined as 
		\begin{equation}
			\mathrm{gr}(A):= \mathrm{gr}_0 (A^\sim),
		\end{equation}
        where $A^\sim $ is the minimal unitization of $A$.
	\end{defn}

It follows from the definition that if $A$ is unital then $\mathrm{gr}(A)$ and $\mathrm{gr}_0(A)$ agree. 
A key characterization of generator rank in terms of density of generating tuples is given by the following theorem. 
This is a dimension theory that measures the density of generators. Roughly speaking, a \ensuremath{C^*}-algebra $A$ has generator rank at most $n$ if the set of $n$-tuples of self-adjoint elements that generate $A$ is dense in $A_{sa}^{\oplus n}$.
    
\begin{thm}[{\cite[Theorem 3.4]{Thi21}}]
	Let $A$ be a separable $\ensuremath{C^*}$-algebra and $n \in \mathbb{N}$. Then $\mathrm{gr}(A) \leq n$ if and only if for every $a_0, \ldots, a_n \in A_{sa}$ and $\varepsilon>0$, there exist $b_0,\ldots, b_n \in A_{sa}$ such that $A = C^*(b_0, \ldots, b_n)$ and
	\begin{equation}
		\|b_i -a_i\| < \varepsilon, \qquad i=0, \ldots, n.
	\end{equation} 
\end{thm}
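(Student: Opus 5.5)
The equivalence comes from combining a Baire category argument with the observation that ``approximately generating a fixed element'' is an open condition. Both sides concern the unital algebra $A^\sim$, since $\mathrm{gr}(A)=\mathrm{gr}_0(A^\sim)$. So I would first work with a separable unital $C^*$-algebra $B$ and prove that $\mathrm{gr}_0(B)\le n$ holds if and only if generating $(n+1)$-tuples are dense in $B_{sa}^{\oplus(n+1)}$. At the end I would translate back to $A$.

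The implication ``$\Leftarrow$'' is immediate. Suppose $a_0,\dots,a_n\in B_{sa}$, $\varepsilon>0$ and $c\in B$ are given. The hypothesis supplies $b_i$ with $\|b_i-a_i\|<\varepsilon$ and $C^*(b_0,\dots,b_n)=B$. Then $\mathrm{dist}(c,C^*(b_0,\dots,b_n))=0<\varepsilon$.

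For ``$\Rightarrow$'', fix a countable dense subset $\{c_k\}_{k\ge1}$ of $B$, using separability. For $k,m\ge1$ let
\begin{equation}
U_{k,m}:=\set{(b_0,\dots,b_n)\in B_{sa}^{\oplus(n+1)} : \mathrm{dist}\bigl(c_k,C^*(b_0,\dots,b_n)\bigr)<1/m}.
\end{equation}
\begin{enumerate}
\item Each $U_{k,m}$ is open. Elements of $C^*(b_0,\dots,b_n)$ are norm limits of noncommutative $*$-polynomials $p(b_0,\dots,b_n)$, so a strict inequality is witnessed by a single polynomial $p$ with $\|c_k-p(b)\|<1/m$. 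Since $b\mapsto p(b)$ is norm continuous, the same $p$ witnesses the inequality for every tuple near $b$.
\item Each $U_{k,m}$ is dense. This is exactly the definition of $\mathrm{gr}_0(B)\le n$, applied at an arbitrary point $(a_0,\dots,a_n)$ with $c=c_k$ and $\varepsilon$ at most $1/m$.
\item $B_{sa}^{\oplus(n+1)}$ is a complete metric space, so by the Baire category theorem $G:=\bigcap_{k,m}U_{k,m}$ is a dense $G_\delta$.
\item Every $b\in G$ generates $B$. The closed subalgebra $C^*(b)$ contains every $c_k$, because $\mathrm{dist}(c_k,C^*(b))=0$ for each $k$. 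Since $\{c_k\}$ is dense in $B$, it follows that $C^*(b)=B$.
\end{enumerate}
Density of $G$ is precisely the stated approximation property.

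The main obstacle is the passage between $A$ and $A^\sim$ when $A$ is non-unital, together with the convention on what $C^*(\cdot)$ means in a unital algebra. If $C^*(b)$ in $A^\sim$ is taken to include the unit, the argument above applied to $B=A^\sim$ gives density of self-adjoint tuples in $A^\sim$ generating $A^\sim$. I would then reduce to tuples in $A$ as follows.
\begin{enumerate}
\item Write $a_i=a_i'+\lambda_i1$ with $a_i'\in A_{sa}$ and $\lambda_i\in\mathbb{R}$.
\item Since the $\lambda_i$ are fixed scalars and are not perturbed, approximating each $a_i$ by $b_i'+\lambda_i1$ amounts to approximating $a_i'$ by $b_i'\in A_{sa}$. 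The approximations produced in $A^\sim$ may have scalar parts differing slightly from $\lambda_i$, but that discrepancy can be absorbed into the error $\varepsilon$.
\item The unital algebra generated by $b_i'+\lambda_i1$ coincides with the unital algebra generated by $b_i'$. Generating $A^\sim$ this way means that $C^*(b_0',\dots,b_n')$ together with $1$ is all of $A^\sim$. Since $A$ is an ideal of codimension one in $A^\sim$ and $C^*(b_0',\dots,b_n')\subseteq A$, this forces $C^*(b_0',\dots,b_n')=A$.
\end{enumerate}
Carrying out this bookkeeping consistently with the convention of \cite{Thi21} is the step I would check most carefully. The Baire argument itself is routine.
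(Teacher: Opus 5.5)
Your Baire category argument for a separable unital algebra is correct; it is the standard proof that density of generating tuples is equivalent to $\mathrm{gr}_0\le n$. Your reduction for the forward implication when $A$ is non-unital is also fine. The scalar parts satisfy $|\mu_i|\le\|\tilde b_i-a_i\|$ because the canonical character $A^\sim\to\mathbb{C}$ is contractive, and $C^*(b')+\mathbb{C}1=A^\sim$ with $C^*(b')\subseteq A$ forces $C^*(b')=A$.

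The gap is the reverse implication for non-unital $A$. In the definition of $\mathrm{gr}_0$, $C^*(b_0,\dots,b_n)$ is the $C^*$-subalgebra generated by the $b_i$, and it need not contain the unit of $A^\sim$. The paper uses it this way: in the proof of Theorem~\ref{main.thm}, $a_0$ and $a_1$ are made invertible precisely so that the spectral projections of $x$ lie in $C^*(x)$.

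With this convention, approximating $a_i'$ by a generating tuple $b'$ of $A$ and keeping the scalars only gives $C^*(b'+\lambda)+\mathbb{C}1=A^\sim$. A $C^*$-subalgebra $D\subsetneq A^\sim$ with $D+\mathbb{C}1=A^\sim$ is an ideal of codimension one, i.e.\ the kernel of a character of $A^\sim$. Then $\mathrm{dist}(1,D)=1$, so the test element $c=1$ is not approximated. This happens exactly when $\lambda=0$ (then $C^*(b')=A$), or when some character $\chi$ of $A$ has $\chi(b_i')=-\lambda_i$ for all $i$.

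Because $\lambda$ is prescribed up to $\varepsilon$, you cannot simply take it large. For a given generating tuple, the set $\{(\chi(b_i'))_i\}$ can contain a neighbourhood of $-\lambda$. For example, take $A=C_0(\mathbb{D}\setminus\{0\})$ with $\mathbb{D}$ the closed unit ball of $\mathbb{R}^{n+1}$ and $b'$ the coordinate functions. Declaring $C^*(\cdot)$ to contain the unit sidesteps this only by changing the definition of $\mathrm{gr}_0(A^\sim)$.

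The missing step is to use the density hypothesis itself to push the tuple off all characters of $A^\sim$. One way to do this:
\begin{enumerate}
\item Perturb $\lambda$ to be non-zero. Let $\widehat A$ be the character space of $A$, and pick a compact $F\subseteq\widehat A$ off which $|(\chi(a_i'))_i|<|\lambda|/4$.
\item Choose a positive contraction $e\in A$ with $\chi(e)=1$ for $\chi\in F$, lifted from $C_0(\widehat A)$.
\item Apply the hypothesis to $(a_i'+\lambda_ie)_i$ to get a generating tuple $k$. Put $a_i'':=k_i-\lambda_ie$, which lies within a prescribed small $\delta$ of $a_i'$.
\item The map $\chi\mapsto(\chi(a_i'')+\lambda_i)_i$ has no zero on $\widehat A\cup\{\infty\}$. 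On $F$ it equals $(\chi(k_i))_i$, which is non-zero because $k$ generates $A$. Off $F$ and at $\infty$ it is close to $\lambda\neq0$.
\item By compactness this map is bounded below by some $\eta>0$. A second application of the hypothesis gives a generating tuple $b'$ with $\|b_i'-a_i''\|<\eta/(2\sqrt{n+1})$.
\item Then no character of $A^\sim$ vanishes on all $b_i'+\lambda_i1$. Hence $C^*(b_0'+\lambda_01,\dots,b_n'+\lambda_n1)=A^\sim$.
\end{enumerate}
With this step added, your proof is complete.
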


Since any element is the sum of two self-adjoint elements, it follows that generator rank at most $1$ implies single generation and that the generator is \emph{generic}; i.e. the set of generators is dense in the algebra.	
The generator rank has useful permanence properties. We only record here the following proposition regarding the generator rank of inductive limits that will be used in \S \ref{sec:TAF}.

\begin{prop}[{\cite[Theorem 6.3]{Thi21}}]\label{thm:gr.ind.limits}
	Let $A$ be an inductive limit of a sequence $(A_\lambda)$ of $\ensuremath{C^*}$-algebras. Then
	\begin{equation}
		\mathrm{gr}(A) \leq  \liminf_\lambda \mathrm{gr}(A_\lambda).
	\end{equation}
\end{prop}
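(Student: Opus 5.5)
Write $n:=\liminf_\lambda \mathrm{gr}(A_\lambda)$. We may assume $n<\infty$. After passing to a subsequence, which does not change the inductive limit, we may assume $\mathrm{gr}(A_\lambda)\le n$ for every $\lambda$. Denote by $\varphi_{\lambda}\colon A_\lambda\to A$ the canonical $*$-homomorphisms into the limit; these are contractive but not necessarily injective. The plan is to verify the defining condition of $\mathrm{gr}_0(A^\sim)\le n$ directly, working throughout with minimal unitizations.

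The unitized maps $\varphi_\lambda^\sim\colon A_\lambda^\sim\to A^\sim$ are unital $*$-homomorphisms. Since $\bigcup_\lambda \varphi_\lambda(A_\lambda)$ is dense in $A$, the union $\bigcup_\lambda \varphi_\lambda^\sim(A_\lambda^\sim)$ is dense in $A^\sim$. This is the one point where some care is needed. If $A_\lambda$ or $A$ happens to be unital, the minimal unitization adjoins a new unit, and the maps between them need not be unital. For this reason I would use the convention $\mathrm{gr}=\mathrm{gr}_0(\,\cdot\,^\sim)$ consistently: the forced unitization makes $\varphi_\lambda^\sim$ unital, and the density statement is then immediate.

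Now fix self-adjoint $a_0,\dots,a_n\in A^\sim$, an element $c\in A^\sim$ and $\varepsilon>0$. By density, choose $\lambda$ and elements $a_i',c'\in A_\lambda^\sim$ with
\[
\|\varphi_\lambda^\sim(a_i')-a_i\|<\varepsilon/2, \qquad \|\varphi_\lambda^\sim(c')-c\|<\varepsilon/2 .
\]
Replacing each $a_i'$ by its real part $\tfrac12(a_i'+a_i'^*)$ keeps these estimates, because $a_i$ is self-adjoint and $\varphi_\lambda^\sim$ is a contractive $*$-map. Since $\mathrm{gr}_0(A_\lambda^\sim)=\mathrm{gr}(A_\lambda)\le n$, there are self-adjoint $b_i'\in A_\lambda^\sim$ with
\[
\|b_i'-a_i'\|<\varepsilon/2 \quad\text{and}\quad \mathrm{dist}\bigl(c',C^*(b_0',\dots,b_n')\bigr)<\varepsilon/2 .
\]

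Set $b_i:=\varphi_\lambda^\sim(b_i')$. Each $b_i$ is self-adjoint, and contractivity gives $\|b_i-a_i\|<\varepsilon$. Moreover, a $*$-homomorphism maps $C^*(b_0',\dots,b_n')$ into $C^*(b_0,\dots,b_n)$. Hence if $d'\in C^*(b_0',\dots,b_n')$ satisfies $\|c'-d'\|<\varepsilon/2$, then $\varphi_\lambda^\sim(d')\in C^*(b_0,\dots,b_n)$ and
\[
\|c-\varphi_\lambda^\sim(d')\|<\varepsilon .
\]
This verifies the definition of $\mathrm{gr}_0(A^\sim)\le n$, so $\mathrm{gr}(A)\le n$.

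The only step I expect to need real attention is the unitization bookkeeping described above; the rest is routine.
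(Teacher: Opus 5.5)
Your approximation-and-push-forward argument is the standard one, and it is correct whenever the maps $A_\lambda^\sim\to A^\sim$ are unital. The gap is the unitization step you flagged: you settle it by changing the definition rather than by an argument. In the paper, $A^\sim$ is the minimal unitization, so $A^\sim=A$ for unital $A$; it does not adjoin a new unit to a unital algebra. This is exactly why $\mathrm{gr}(A)=\mathrm{gr}_0(A)$ for unital $A$ is immediate from the definition.

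With this convention your density claim holds in two cases. If $A$ is unital, then for large $\lambda$ the closed subalgebra $\varphi_\lambda(A_\lambda)$ contains an element within distance $<1$ of $1_A$, hence contains $1_A$, so $\varphi_\lambda$ is unital. If the $A_\lambda$ are non-unital, the unital extension works directly. The claim fails when the $A_\lambda$ are unital and $A$ is not: then $\varphi_\lambda^\sim=\varphi_\lambda$ is non-unital, and $\bigcup_\lambda\varphi_\lambda(A_\lambda)\subseteq A$ has distance $1$ from $1_{A^\sim}$. This is precisely the situation in which the paper applies the proposition, namely unital corners $e_nAe_n$ of a non-unital tracially AF algebra.

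Passing to the forced unitization $A_\lambda^+\cong A_\lambda\oplus\mathbb{C}$ restores density. On the side of $A$ it also costs nothing, since $A^\sim$ is a quotient of $A^+$ and your push-forward argument shows $\mathrm{gr}_0$ does not increase under surjections. But at stage $\lambda$ you then use $\mathrm{gr}_0(A_\lambda\oplus\mathbb{C})\le n$, while the hypothesis only gives $\mathrm{gr}_0(A_\lambda)\le n$, and this step is not a formality.

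To see why, take self-adjoint $(b_i,\beta_i)\in A_\lambda\oplus\mathbb{C}$. The algebra $C^*((b_i,\beta_i)_i)$ is either $C^*(b_0,\dots,b_n)\oplus\mathbb{C}$, or the graph of a $*$-homomorphism $\chi\colon C^*(b_0,\dots,b_n)\to\mathbb{C}$ (possibly zero) with $\chi(b_i)=\beta_i$. In the latter case it stays at distance at least $1/2$ from $(0,1)$. So $(\beta_i)_i$ must avoid the set of such character values. For a given tuple this set can have interior in $\mathbb{R}^{n+1}$, for example for commuting $b_i$ whose joint spectrum has interior. Then moving the $\beta_i$ alone does not help, and the hypothesis on $A_\lambda$ has to be used again to perturb the $b_i$.

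You need to supply the inequality $\mathrm{gr}_0(B\oplus\mathbb{C})\le\mathrm{gr}_0(B)$ for unital $B$, for instance via the behaviour of the generator rank under direct sums in Thiel's work. With that in place your argument is complete. The paper itself gives no proof of this proposition and only cites Thiel.
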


\section{Tracially AF algebras} \label{sec:TAF}

In this section, we prove our first main theorem, solving the generator problem for simple, separable, tracially approximately finite-dimensional $C^*$-algebras. We begin by recalling their definition and collect the structural properties that will be used in the proof of our main theorem. We will make particular use of real rank zero and strict comparison, as well as of a strengthened form of the tracial approximations which provides additional control over the well-behaved corner.

To simplify the notation in this section, we write $a \approx_\varepsilon b$ when $\|a-b\|< \varepsilon$ and $a \in_\varepsilon B$ when $\mathrm{dist}(a,B) < \varepsilon$.

\subsection{Properties of tracially AF algebras}
We first recall the definition of a tracially AF algebra. The non-unital version can be traced back to: \cite[Definition 3.1]{LinTRan}.

\begin{defn}[{\cite[Definition 2.1]{Lin01}}]
    A unital \ensuremath{C^*}-algebra is said to be \emph{tracially approximately finite dimensional} (tracially AF) if for any non-empty finite subset $\mathcal{F} \subset A$ with  $x_1$, any $\varepsilon > 0$ and any $a \in A_+$, there exists a finite dimensional \ensuremath{C^*}-algebra $B \subset A$ with $p=1_B$ such that
    \begin{enumerate}[label=(\roman*)]
        \item $px \approx_\varepsilon xp$ for all $x \in \mathfrak{F}$,
        \item $\mathrm{dist} (pxp, B) < \varepsilon$ for all $x \in \mathfrak F$, and $\| px_1p\| \geq \|x_1\| - \varepsilon$,
        \item $1_A-p \precsim a$ .
    \end{enumerate}
    A non-unital $C^*$-algebra is tracially AF if its minimal unitisation is tracially AF.
\end{defn}

\begin{rmk}
    By \cite[Proposition 3.8]{Lin01}, conditions (ii) and (iii) are equivalent to 
    \begin{enumerate}[label=(\roman*'), start=2]
        \item $pxp \in_\varepsilon B$ for all $x \in \mathfrak F$,
        \item $1-p$ is unitarily equivalent to a projection in $\overline{aAa}$.
    \end{enumerate}
    In fact, the most common definition of simple unital tracially AF algebra in the literature uses condition (ii') rather than (ii). 
\end{rmk}

\begin{rmk}
Every simple, separable, tracially AF \ensuremath{C^*}-algebra has real rank zero, stable rank one and it is also quasidiagonal (\cite[Theorem 3.4]{Lin01}, \cite[Corollary A.22]{FG20}). Additionally, they have strict comparison (as in Definition \ref{def:strictcomp}).
By \cite[Proposition 2.4, Theorem 3.6]{Lin01}, a simple, unital \ensuremath{C^*}-algebra $A$ is tracially AF if and only if for any $\varepsilon > 0$, $\delta > 0$, and any finite subset $\mathfrak F \subset A$, there exists a finite dimensional subalgebra $B \subset A$ with $p=1_F$ such that condition (i), (ii') holds and 
\begin{enumerate}[label=(\roman*), start=4]
    \item $\tau(1-p) < \delta$ for all normalised quasitraces on $A$.
\end{enumerate}
We will make free use of these properties throughout this section.
\end{rmk}

We record the following technical proposition that will be a key ingredient in the main proof of this section. This proposition highlights important structural tracial approximations for tracially AF algebras by \ensuremath{C^*}-algebras of the form $\bigoplus_{k=1}^r M_{n_k}(\mathbb{C}) \otimes C_k$. 
Its proof is contained in \cite[Proposition 6.9]{FLL22}, but for completeness we include it here.

\begin{prop}[{\cite[Proposition 6.9]{FLL22}}]\label{prop:esstracZstab}
    Let $A$ be simple, unital, tracially AF \ensuremath{C^*}-algebra. Then for any finite subset $\mathfrak{F}\subset A$, any positive contraction $a \in A$ and any $\varepsilon>0$ there is a finite dimensional \ensuremath{C^*}-algebra $F \cong \bigoplus_{k=1}^r M_{n_k}(\mathbb{C})$ and simple, infinite dimensional, unital, AF algebras $C_k$ such that $B = \bigoplus_{k=1}^r M_{n_k}(\mathbb{C}) \otimes C_k \subset A$ and, writing $p=1_B$,
    \begin{enumerate}
        \item $px \approx_\varepsilon xp$ for all $x \in \mathfrak F$,
        \item $pxp \in_\varepsilon \bigoplus M_{n_k}(\mathbb{C}) \otimes 1_{C_k}$,
        \item $1-p \precsim a$.
    \end{enumerate}
\end{prop}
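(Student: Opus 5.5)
We start from the tracially AF approximation of Lin (conditions (i), (ii'), (iii)) applied to $\mathfrak F$, a small tolerance $\varepsilon/3$, and half of the given $a$ in Cuntz size. Concretely, since $A$ is simple with real rank zero and is not of type I (it is infinite dimensional, simple and tracially AF), we first pick a nonzero projection $e \in \overline{aAa}$. We then split $e$ into two orthogonal nonzero projections $e_1,e_2$, which is possible because $eAe$ is infinite dimensional with real rank zero. This produces a finite-dimensional $F_0=\bigoplus_k M_{n_k}(\mathbb C)\subset A$ with unit $p_0$ such that $p_0x\approx p_0x p_0 \approx xp_0$ and $p_0xp_0\in_{\varepsilon/3}F_0$ for $x\in\mathfrak F$, and such that $1-p_0\precsim e_1$.

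The key step is to enlarge each matrix block by a simple, infinite-dimensional AF algebra while keeping the same matrix units. Let $e^{(k)}_{11}$ be a minimal projection of the $k$-th block of $F_0$. The corner $e^{(k)}_{11}Ae^{(k)}_{11}$ is again a simple, unital, infinite-dimensional tracially AF algebra, since tracial AF passes to corners. We want a simple unital AF subalgebra $C_k\subseteq e^{(k)}_{11}Ae^{(k)}_{11}$ with unit $e^{(k)}_{11}$. To get it, we build an increasing sequence of finite-dimensional subalgebras $D_1\subset D_2\subset\cdots$ with unit $e^{(k)}_{11}$, using real rank zero and strict comparison (comparison of projections by traces) to split projections. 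Simplicity of the limit is forced by arranging that at each stage every minimal projection of $D_{m}$ dominates, in a partial embedding sense, a copy of every minimal projection of $D_{m-1}$, so that the connecting maps have full multiplicity. This is possible because every nonzero projection in a simple tracially AF algebra can be divided into arbitrarily many pieces that are small in trace and pairwise equivalent. We then set
\[
C_k=\overline{\bigcup_m D_m}, \qquad B=\bigoplus_k M_{n_k}(\mathbb C)\otimes C_k ,
\]
where $B$ is realized inside $A$ through the matrix units $e^{(k)}_{ij}$ via $e^{(k)}_{ij}\otimes c\mapsto e^{(k)}_{i1}ce^{(k)}_{1j}$. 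By construction, $1_B=p_0$ and $F_0= \bigoplus M_{n_k}\otimes 1_{C_k}$. Conditions (1) and (2) are therefore inherited verbatim from (i) and (ii'), and condition (3) holds because $1-p=1-p_0\precsim e_1\le e\precsim a$.

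The main obstacle is this middle step: guaranteeing that $C_k$ is genuinely simple and infinite dimensional with unit exactly $e^{(k)}_{11}$, rather than merely producing some AF subalgebra. To handle it, we first try the standard fact that a unital, simple, infinite-dimensional $C^*$-algebra of real rank zero and stable rank one with strict comparison on projections contains a unital simple AF subalgebra, namely a UHF-type one. This would be obtained by repeatedly halving or thirding the unit up to a small remainder, where the remainder is absorbed by a tracially small correction using the $e_2$-budget. If that fails for exact unitality, we would instead allow $1-p$ to absorb the remainders. In that case we take $p=\sum_k\sum_i e^{(k)}_{i1}(1_{C_k})e^{(k)}_{1i}\le p_0$ and arrange that $p_0-p\precsim e_2$, so that $1-p\precsim e_1\oplus e_2\precsim e$. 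Condition (1) still holds after shrinking $\varepsilon$, because $p_0-p$ lies in the relative commutant structure and has small norm effect on the compressions only via elements of $F_0$. Only the commutation estimate requires care here, which we would control by choosing the $D_m$ inside the commutant of $F_0$ in $p_0Ap_0$.
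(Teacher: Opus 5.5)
Your architecture matches the paper's. You take Lin's approximation $F_0=\bigoplus_k M_{n_k}$ with unit $p_0$, place a simple infinite-dimensional AF algebra $C_k$ unitally in $e^{(k)}_{11}Ae^{(k)}_{11}$, and embed $\bigoplus_k M_{n_k}\otimes C_k$ via $e^{(k)}_{ij}\otimes c\mapsto e^{(k)}_{i1}ce^{(k)}_{1j}$. Then $F_0=\bigoplus_k M_{n_k}\otimes 1_{C_k}$, and (1)--(3) are inherited. The paper, however, does not construct $C_k$. It imports from \cite[Proposition~6.9]{FLL22} a unital embedding of a simple infinite-dimensional AF algebra into the corner, even one inducing a surjection on Murray--von Neumann semigroups. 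That existence statement is the real content of the proposition, and your attempt to supply it has a genuine gap.

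The UHF route you fall back on cannot work: a simple tracially AF algebra need not admit any nonzero homomorphism from a UHF algebra, unital or not. Take the irrational rotation algebra $A_\theta$, which is simple and tracially AF. Here $K_0(A_\theta)\cong\mathbb{Z}+\theta\mathbb{Z}$ has no nonzero element divisible by arbitrarily large integers. So if $\phi$ mapped a UHF algebra into a corner of $A_\theta$, the class of $\phi(1)$ would vanish, and faithfulness of the trace would force $\phi=0$. Absorbing remainders into $1-p$ does not rescue this: once the first stage is fixed, every later halving or thirding must be exact.

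For a general nested sequence $D_1\subset D_2\subset\cdots$ with unit $e^{(k)}_{11}$, each minimal projection $f_i$ of $D_m$ must be an \emph{exact} orthogonal sum of subprojections drawn from every block of $D_{m+1}$. That is, you need $[f_i]=\sum_j m_{ij}[g_j]$ in $V(e^{(k)}_{11}Ae^{(k)}_{11})$ with all $m_{ij}\geq 1$. Dividing a projection into many pairwise equivalent pieces of small trace achieves this only up to a remainder. That remainder cannot be discarded, since $D_m$ is already fixed.

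What is missing is an Effros--Handelman--Shen type decomposition in the projection monoid of the corner. This is not automatic: $K_0$ of a simple tracially AF algebra can have torsion, so it need not be a dimension group. Supplying this input is exactly what the cited result does.

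Your non-unital variant ($1_{C_k}\le e^{(k)}_{11}$, $p\le p_0$, $p_0-p\precsim e_2$) is fine for (1)--(3). Indeed $p$ commutes with $F_0$, so for $y\in F_0$ close to $p_0xp_0$ you get $px=pp_0x\approx pp_0xp_0\approx py=yp\approx p_0xp\approx xp$, exactly as in Lemma~\ref{lem:M_5.tracial.approx}. But this variant still needs a simple infinite-dimensional AF subalgebra of the corner whose unit has nearly full trace, so it does not bypass the gap. Note also that the statement tacitly requires $A$ to be infinite-dimensional, since $M_n$ is tracially AF.
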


\begin{proof}
	Let $\mathfrak F \subset A$ be a finite subset, $a\in A$ a positive contraction and  $\varepsilon>0$. By the TAF condition, there is a finite dimensional subalgebra $F= \bigoplus_{s=1}^r M_{_{n_s}} (\mathbb C)$ with $p=1_F$ such that
	\begin{enumerate}
		\item $xp \approx_\varepsilon px$ for all $x \in \mathfrak{F}$,
		\item $pxp \in_\varepsilon F$, and
		\item $1-p \precsim a$.
	\end{enumerate}
	As in the proof of \cite[Proposition 6.9]{FLL22}, there is a unital, simple, infinite dimensional, AF algebra $B_s$ with a unital embedding $\phi_s : B_s \to \overline{e_{11}^{(s)}Ae_{11}^{(s)}}$ such that the induced map at the level of Murray--von Neumann semigroups $V(\phi_s): V(B_s) \to V(\overline{e_{11}^{(s)}Ae_{11}^{(s)}})$ is surjective.
	Set $C_s := \phi_s(B_s) \subset \overline{e_{11}^{(s)}Ae_{11}^{(s)}}$ and $B = \bigoplus_{s=1}^r M_{n_s}(\mathbb{C}) \otimes C_s$. Observe that $B$ sits as a subalgebra of $A$ via the following map
	\begin{equation}
        \bigoplus_{s=1}^r M_{n_s}(\mathbb{C}) \otimes C_s \to A, \qquad e_{ij}^{(s)}\otimes c \mapsto e_{i1}^{(s)} \cdot c \cdot e_{1j}^{(s)}. 
	\end{equation}
	This map satisfies that $e_{ij}^{(s)} \otimes 1_{C_s} \mapsto e_{ij}^{(s)}$. Then under this identification, it follows that $pxp \in_\varepsilon \bigoplus_{s=1}^r M_{n_s}(\mathbb{C}) \otimes 1_{C_s}$. The last part of the statement is the content of \cite[Proposition 6.9]{FLL22}.
\end{proof}

\subsection{Generator rank of tracially AF algebras}

After introducing all the necessary notions, we focus on the main theorem of this section. We will show that tracially AF algebras have generator rank at most $1$. 

Before proceeding, let us briefly explain the ideas behind the proof. Given any two self-adjoint elements $a$ and $b$, and and arbitrary element $c$, we use Proposition \ref{prop:esstracZstab} to produce a tracial approximation of the form 
$${\bigoplus_{s=1}^r M_{n_s}(\mathbb{C}) \otimes C_s}$$ 
whose unit we denote by $p$. It is a crucial ingredient that compressions $pap, pbp$ and $pcp$ are approximately in $\bigoplus_{s=1}^r M_{n_s}(\mathbb{C}) \otimes 1_{C_s}$. This enables us to produce elements that almost commute with $a,b$ and $c$. 

Standard matricial techniques, similar to those used to prove single generation and generator rank for UHF-stable  \ensuremath{C^*}-algebras, allow us to 
perturb $ap$ and $pb$ to produce elements $x'$ and $y'$ that generate $\bigoplus_{k=s}^r M_{n_s}(\mathbb{C}) \otimes C_s$ and approximately recover $cp$. 
The main difficulty lies in simultaneously approximating the tracial remainder $c(1-p)$. 

To do this, we will produce a new tracial approximation. In each AF-algebra $C_s$, we choose a matrix subalgebra of dimension $5$ whose unit is tracially close to that of $C_s$. 
Adding these elements produces a subprojection $f$ of $p$ supporting a finite dimensional algebra of the form $\bigoplus_{k=s}^r M_{n_s}(\mathbb{C}) \otimes M_5(\mathbb C)$. 
While constructing such $f$, we can also arrange $1-f$ to be tracially small and for $f$ to approximately commute with $a, b$ and $c$.

We fix some projection $q \in \bigoplus_{k=s}^r M_{n_s}(\mathbb{C}) \otimes M_5(\mathbb C)$ which, by strict comparison, satisfies $1-f \precsim q$. 
This yields a partial isometry $v$ with support projection $1-f$ and range projection in $qAq$. 
We can encode $v$ and $v(1-f)c(1-f)v^*$ into one generator so that $(1-f)c(1-f)$ is arbitrarily close to the \ensuremath{C^*}-algebra generated by these new generators. Since $f$  approximately commutes with $c$, the off-diagonal term $fc(1-f)$ is small in norm, so that the distance from $c(1-f)$ to the generated subalgebra is also small.
One further detail remains to be addressed. Although the original tracial approximation satisfies that $cp$ is close to the larger subalgebra $\bigoplus_{k=s}^r M_{n_s}(\mathbb{C}) \otimes 1_{C_s}$, it does not follow that
$cf$ is actually close to the finite dimensional subalgebra $\bigoplus_{k=s}^r M_{n_s}(\mathbb{C}) \otimes M_5(\mathbb C)$. However, we can enforce this by perturbing our generators one more time.

We choose two orthogonal copies of $M_2(\mathbb C)$ via four pairwise orthogonal and equivalent projections $q_2, q_3, q_4, q_5 \in \bigoplus_{k=s}^r 1_{M_{n_s}} \otimes M_5(\mathbb C)$ together with partial isometries $u$ and $w$ witnesing $q_2 \sim q_3$ and $q_4 \sim q_5$. We then encode $cw$ with one of our generators to approximately recover $cq_2$. By additionally encoding $u$ in the same generator, we can in fact recover $cf$.

Thus both $cf$ and $c(1-f)$ and hence $c = cf + c(1-f)$ can be approximated by elements in the \ensuremath{C^*}-algebra generated by the perturbations of $a$ and $b$. Since the perturbation can be arbitrarily small and the element $c$ is also arbitrary, we then obtain that the generator rank of $A$ is at most $1$.

Our argument requires three mutually orthogonal and complementary corners: one containing a projection that controls $1-f$ while the other two containing the pairs $q_2, q_3$ and $q_4, q_5$, respectively. 
The orthogonality of these corners ensures that mixed products involving the corresponding partial isometries vanish. This explains why we need the matrix size of the second tensor factor to be at least $5$.

We begin with a technical lemma that will produce tracial factorizations of the form $\bigoplus M_{n_s} (\mathbb C) \otimes M_5(\mathbb C)$.

\begin{lemma}\label{lem:M_5.tracial.approx}
    Let $A$ be a simple, separable, tracially AF \ensuremath{C^*}-algebra. Let $\varepsilon, \delta > 0, a,b \in A_{sa}$ and $c \in A$. 
    Then there exists a subalgebra $\bigoplus_{s=1}^r M_{n_s}(\mathbb C) \otimes M_5(\mathbb C) \subset A$ with unit $f$ such that for all $x \in \{ a,b, c \}$
    \begin{enumerate}
        \item $fx \approx_\varepsilon xf$,

        \item $fxf \in_\varepsilon \bigoplus_{s}^r M_{n_s}(\mathbb C) \otimes 1_{M_s}$ 

        \item $\tau(1-f) < \delta$ for all $\tau \in T(A)$.

    \end{enumerate}
\end{lemma}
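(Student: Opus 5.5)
We plan to start from Proposition~\ref{prop:esstracZstab} and then cut down inside each AF factor $C_s$. Fix $\varepsilon,\delta>0$ and $\mathfrak F=\{a,b,c\}$. If $A$ is not unital we pass to $A^\sim$, which is tracially AF by definition. We choose a positive contraction $e\in A$ with $d_\tau(e)<\delta/2$ for all $\tau\in T(A)$. Such an element exists because $A$ is simple, infinite dimensional and has real rank zero: we can take a nonzero projection of small trace, obtained by halving projections repeatedly using divisibility from the tracial approximations. Proposition~\ref{prop:esstracZstab}, applied with $\varepsilon/2$ and $e$, gives
\[
B=\bigoplus_{s=1}^r M_{n_s}(\mathbb C)\otimes C_s\subset A,\qquad p=1_B,
\]
with $px\approx_{\varepsilon/2}xp$, $pxp\in_{\varepsilon/2}\bigoplus_s M_{n_s}(\mathbb C)\otimes 1_{C_s}$, and $1-p\precsim e$. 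The last relation gives $\tau(1-p)<\delta/2$ for all $\tau$.

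Next, in each $C_s$ we find a unital copy of $M_5(\mathbb C)$ in a corner $g_sC_sg_s$, where $g_s\le 1_{C_s}$ is a projection with $\tau(e^{(s)}_{11}\otimes(1_{C_s}-g_s))$ small for every $\tau\in T(A)$. Since $C_s$ is simple, infinite dimensional and AF, its dimension group $K_0(C_s)$ is a simple, non-cyclic dimension group, so it is dense in its state space. Hence we can write $[1_{C_s}]=5[h_s]+[r_s]$ where $[r_s]$ is arbitrarily small in every state of $C_s$. Concretely, choose a finite-dimensional subalgebra $\bigoplus_j M_{m_j}$ of $C_s$ in which all the $m_j$ are large: this is possible because $C_s$ has no finite-dimensional representations. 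Then take $5\lfloor m_j/5\rfloor$ diagonal projections in each block. The leftover $r_s$ then has trace at most $4/\min_j m_j$ in every trace of $C_s$. Traces of $A$ restrict on $e_{11}^{(s)}Ae_{11}^{(s)}$ to positive multiples of traces on $C_s$ of total mass at most $1$, so we get $\tau(e_{11}^{(s)}\otimes r_s)\le 4/\min_j m_j$ uniformly in $\tau\in T(A)$.

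Set $g_s=1_{C_s}-r_s$, let $M_5(\mathbb C)\subset g_sC_sg_s$ be the copy just built, and put
\[
f=\sum_s 1_{M_{n_s}}\otimes g_s\le p,
\]
so that $\bigoplus_s M_{n_s}(\mathbb C)\otimes M_5(\mathbb C)\subset A$ has unit $f$. Then
\[
\tau(1-f)=\tau(1-p)+\sum_s n_s\,\tau(e_{11}^{(s)}\otimes r_s)<\delta/2+\delta/2
\]
once the $m_j$ are chosen large enough relative to $\sum_s n_s$, which proves (3).

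For (1) and (2), the key point is that $f$ commutes exactly with $\bigoplus_s M_{n_s}(\mathbb C)\otimes 1_{C_s}$. Write $pxp\approx_{\varepsilon/2} y$ with $y$ in that algebra. Since $f=fp=pf$, we get $fxf=f(pxp)f\approx_{\varepsilon/2} fyf=yf$. The element $yf$ lies in $\bigoplus_s M_{n_s}(\mathbb C)\otimes 1_{M_5}$, which is what (2) asks for (we read the ``$1_{M_s}$'' in the statement as $1_{M_5}$). For (1), we decompose
\[
fx-xf=f(px-xp)+f(pxp)-(pxp)f+(px-xp)f,
\]
where we have used $fx=fpx$ and $xf=xpf$. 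The middle commutator is within $2\cdot\varepsilon/2$ of $[f,y]=0$. The outer terms are each bounded by $\varepsilon/2$. This gives $\|fx-xf\|<2\varepsilon$, so running the argument with $\varepsilon/4$ in place of $\varepsilon/2$ fixes the constants.

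We expect the main obstacle to be the uniform tracial smallness of $1-f$. The smallness of $1-p$ is handled by choosing $e$ with small trace and using that $\precsim$ decreases $d_\tau$. For the remainders $r_s$, we need a uniform bound over all of $T(A)$, and we get it from the block sizes $m_j$ of the finite-dimensional subalgebras of $C_s$ being large. This step needs some care to justify that such subalgebras exist, which follows from $C_s$ being simple and infinite dimensional.
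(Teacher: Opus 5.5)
Your proposal is correct and follows essentially the paper's route: apply Proposition~\ref{prop:esstracZstab}, place in each $C_s$ a copy of $M_5(\mathbb C)$ whose unit $g_s$ has uniformly small tracial defect, set $f=\sum_s 1_{M_{n_s}}\otimes g_s$, and use that $f$ commutes exactly with $\bigoplus_s M_{n_s}(\mathbb C)\otimes 1_{C_s}$. You are more explicit than the paper about why $\tau(1-p)$ and $\tau(1_{C_s}-g_s)$ can be made uniformly small; the one thing to drop is the aside reducing non-unital $A$ to $A^\sim$, since $A^\sim$ is not simple and so Proposition~\ref{prop:esstracZstab} does not apply to it (the lemma is only meaningful, and only used, for unital $A$).
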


\begin{proof}
    Set $\mathfrak F = \{a, b, c, c^*\}$. We can assume that $0 < \delta \leq \varepsilon/4$.
    Thus, by Proposition \ref{prop:esstracZstab}, there is a finite dimensional \ensuremath{C^*}-algebra $\bigoplus\limits_{s=1}^r M_{n_s}(\mathbb{C})$ and simple, infinite dimensional, unital, AF algebras $C_s$ such that $\bigoplus\limits_{s=1}^r M_{n_s}(\mathbb{C}) \otimes C_s \subset A$ and, denoting its unit by $p$, we have
    \begin{enumerate}
        \item $p x \approx_{\delta} x p$ for all $x \in \mathfrak F$,
        \item there exist $a_0, b_0, c_0 \in \bigoplus M_{n_s}(\mathbb{C}) \otimes 1_{C_s}$ such that 
        \begin{equation}
        	p a p \approx_\delta a_0, \qquad p b p \approx_\delta b_0, \qquad p c p \approx_\delta c_0, \notag 
        \end{equation}
  
        \item $\tau(1-p) < \delta/2$ for all $\tau \in T(A)$.
    \end{enumerate}
    Since each $C_s$ is an infinite-dimensional, simple, unital, AF algebra, there is a copy of $M_{5}(\mathbb C)$ within $C_s$ in such a way that its unit is uniformly close to $1_{C_s}$ in trace. Precisely, we will denote this copy contained in $C_s$ by $M_5^{(s)}$ and write its unit as $f_s$. Then we can assume that 
    \begin{equation} \label{eq:fs_largetrace}
    \tau(1_{C_s}- f_s) < \delta/2 \qquad \text{for all} \qquad \tau \in T(C_s). 
    \end{equation}
    Observe that for any $\tau \in T(A)$ and $1 \leq s \leq r$, simplicity yields $\tau(1_{M_s} \otimes 1_{c_s}) > 0$. It follows that 
    \begin{equation}
        \tau_s (x) = \tau(1_{M_{n_s}} \otimes 1_{C_s})^{-1} \tau(1_{M_{n_s}} \otimes x) \quad (x\in C_s)
    \end{equation}
    is a tracial state on $C_s$.
    We define
    \begin{equation}
	   f:= \bigoplus_{s=1}^r 1_{M_{n_s}} \otimes f_s. 
    \end{equation}
    Thus, for $\tau \in T(\bigoplus_s M_{n_s} \otimes C_s)$,
    \begin{equation}
        \begin{aligned}
        \tau(p-f) &= \tau\left( \bigoplus_{s=1}^r 1_{M_s} \otimes (1_{C_s} - f_s) \right) \\ 
        & = \sum_{s=1}^r \tau(1_{M_s} \otimes 1_{C_s} ) \tau_s( 1_{C_s} - f_s) \\
        & < \sum_{s=1}^r \tau(1_{M_s} \otimes 1_{C_s} ) \delta/2 = \delta/2.
        \end{aligned}
    \end{equation}
    By a similar argument, we obtain that for any $\tau \in T(A)$
    \begin{equation}
        \tau(p-f) = \tau(p) \left( \tau(p)^{-1} \tau(p-f) \right) < \delta/2. 
    \end{equation}
    This implies
    \begin{equation}
	\tau(1-f) =  \tau(1-p) + \tau(p-f) < \delta.
\end{equation}
        
Since $a_0, b_0, c_0 \in \bigoplus_{s=1}^r M_{n_s}( \mathbb{C}) \otimes 1_{C_s}$, these elements commute with $f$. Then, using that $f \leq p$, we have
\begin{equation}
    fa = fpa \approx_{\delta} fpap \approx_{\delta} fa_0   = a_0 f \approx_{\delta} pap f \approx_{\delta} apf = af.
\end{equation} \label{eq:fF_approxcommute}
By a similar argument, 
\begin{equation}
    f h \approx_{\varepsilon} h f
\end{equation}
for all $h \in \mathfrak F$. 
On the other hand,
\begin{equation}
    fcf = fpcpf \approx_{\varepsilon} fc_0 f = c_0 f \in \bigoplus_{s=1}^r M_{n_s}(\mathbb C) \otimes M_5^{(s)}.
\end{equation}
By the same argument, $fxf \in_\varepsilon \bigoplus_{s=1}^r M_{n_s}(\mathbb C) \otimes M_5^{(s)}$ for any $x \in \mathfrak F$.
\end{proof}

\begin{thm}\label{main.thm}
    Let $A$ be a simple, separable, tracially AF \ensuremath{C^*}-algebra. Then the generator rank of $A$ is at most $1$. In particular, $A$ is singly generated.
\end{thm}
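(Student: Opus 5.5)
We reduce to the unital case and then follow the outline given before Lemma~\ref{lem:M_5.tracial.approx}. If $A$ is non-unital, its unitization is again simple-up-to-a-character tracially AF and $\mathrm{gr}(A)=\mathrm{gr}_0(A^\sim)$. We will assume $A$ unital; the non-unital case is handled by running the same argument in $A^\sim$ with tracial approximations taken from $A$, or by invoking the permanence results of \cite{Thi21}. Since $A$ is separable, it suffices to verify the defining condition for $\mathrm{gr}_0(A)\le 1$. That is, given $a,b\in A_{sa}$, $c\in A$ and $\varepsilon>0$, we must find self-adjoint $a'\approx_\varepsilon a$ and $b'\approx_\varepsilon b$ with $c\in_\varepsilon C^*(a',b')$. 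Theorem 3.4 of \cite{Thi21} then upgrades this to density of generating pairs, and single generation follows.

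\textbf{Step 1 (tracial structure).} Apply Lemma~\ref{lem:M_5.tracial.approx} with a small $\eta\ll\varepsilon$ and $\delta$ small. This gives $E=\bigoplus_s M_{n_s}\otimes M_5\subset A$ with unit $f$ satisfying three properties: $f$ $\eta$-commutes with $a,b,c$; $fxf$ is $\eta$-close to some $x_0\in\bigoplus_s M_{n_s}\otimes 1_{M_5}$; and $\tau(1-f)<\delta$.

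Inside $\bigoplus_s 1_{M_{n_s}}\otimes M_5$, take the diagonal projections $q_1,\dots,q_5$ (the $e_{ii}$'s). Each has trace $\tau(f)/5$, and this exceeds $\delta>\tau(1-f)$ if $\delta$ is small. Strict comparison then gives a partial isometry $v$ with $v^*v=1-f$ and $vv^*\le q_1$. Also let $u$ and $w$ be the matrix units $e_{23}$ and $e_{45}$, tensored with $1_{M_{n_s}}$.

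\textbf{Step 2 (perturbation).} Replace $a,b$ by $a_1=fa_0f+(1-f)a(1-f)$ and $b_1=fb_0f+(1-f)b(1-f)$. This costs $O(\eta)$ because $f$ almost commutes with $a$ and $b$. Now perturb $a_1$ on the corner $fAf=E$ by a small self-adjoint element of $E$ with distinct, well-separated eigenvalues, chosen generically. This makes $C^*(fa_1f)\cap E$ contain a maximal abelian family of diagonal matrix units. In particular it contains the $q_i$-type projections for each block $s$, as well as $f$ and $1-f$ as spectral projections. We must keep the spectrum of the $E$-part separated from that of $(1-f)a_1(1-f)$, so that $f$ lies in $C^*(a')$ by functional calculus. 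To do this, we first shift $(1-f)a(1-f)$ slightly, or else use a gap argument together with a small rescaling.

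Then set
\[
b' = b_1 + \theta\,(v+v^*) + \theta\,(v\,c_{11}v^*+\text{h.c.}) + \theta\,(w c f u^* + \text{h.c.}) + \theta\, z,
\]
with $\theta$ small. Here $c_{11}=(1-f)c(1-f)$, and $z$ is a small self-adjoint element of $E$ built from off-diagonal matrix units linking the blocks and the $M_{n_s}$ directions, chosen so that together with the diagonal from $a'$ it generates $E$. The pieces of $b'$ are placed in mutually orthogonal corners ($q_1$ for $v$, $q_2,q_3$ for $u$, $q_4,q_5$ for $w$), so compressing $b'$ by the projections available in $C^*(a')$ separates them. Compressions such as $(1-f)b'q_1$ give $\theta v$ up to polar decomposition. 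Then $v^*(q_1b'q_1)v$ recovers $c_{11}$ up to hermitian parts, and combining with $i$-twisted versions, or using $v c_{11}v^*$ placed off-diagonally between $q_1$ and another sub-corner of $q_1$, recovers $c_{11}$ itself. Similarly $q_4b'q_2$ recovers $w c f u^*$, so $cf\approx c_0 f$ is obtained by multiplying by $w^*$ and $u$. These lie in $E$ and are generated anyway once $E\subseteq C^*(a',b')$.

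\textbf{Step 3 (reassembly).} Write $c=fcf+(1-f)c(1-f)+fc(1-f)+(1-f)cf$. The off-diagonal terms have norm at most $\eta$, the first term lies within $\eta$ of $E\subseteq C^*(a',b')$, and the second lies in $C^*(a',b')$ by Step 2. Hence $\operatorname{dist}(c,C^*(a',b'))<\varepsilon$.

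The main obstacle is Step 2. We must place all the encoded data in $b'$ so that it can be exactly extracted via functional calculus of $a'$, while $a'$ still lies within $\varepsilon$ of $a$ even though the $(1-f)$-part of $a$ is uncontrolled. This requires careful spectral separation between $fa'f$ and $(1-f)a'(1-f)$, and the orthogonality of the corners $q_1$, $\{q_2,q_3\}$, $\{q_4,q_5\}$, so that cross terms vanish. If exact extraction fails, we will instead approximate: use continuous functional calculus with small errors, and absorb them into $\varepsilon$ by choosing $\eta$ and $\delta$ after $\theta$.
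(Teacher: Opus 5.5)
Your architecture matches the paper's: the $M_5$-tracial approximation of Lemma~\ref{lem:M_5.tracial.approx}, strict comparison to move $1-f$ under $q_1$, encoding $v$ and a conjugate of $(1-f)c(1-f)$ into the second generator in orthogonal corners, and cutting down by spectral projections of the first generator. However, the step you call the main obstacle is not resolved by what you propose.

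For $f$ and the $q_i$ to lie in $C^*(a')$, the spectrum of $(1-f)a'(1-f)$ in $(1-f)A(1-f)$ must avoid the finitely many eigenvalues of the $E$-part. But $\sigma\bigl((1-f)a(1-f)\bigr)$ can be a whole interval containing $\sigma(a_0)$. A small translation by a multiple of $1-f$, or a small rescaling, leaves an interval still covering those eigenvalues, and there is no gap to exploit. Then $f\notin C^*(a')$, and every compression in your Step~2 is unavailable. The missing ingredient is real rank zero of the corner $(1-f)A(1-f)$: simple TAF algebras have real rank zero, and this passes to corners. With it, you replace $(1-f)a(1-f)$ by a nearby self-adjoint element with finite spectrum. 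You then move finitely many eigenvalues so that this spectrum, the eigenvalues of the $E$-part and $0$ are pairwise disjoint. This is exactly how the paper builds its first generator $x$.

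The second problem is the encoding of $c_{11}=(1-f)c(1-f)$. The term $\theta(vc_{11}v^*+\text{h.c.})$ sits in the diagonal corner $q_1Aq_1$. After removing $b_0q_1$ and $\theta q_1zq_1$ from $q_1b'q_1$, you therefore recover only $v(c_{11}+c_{11}^*)v^*$, i.e.\ the real part. Adding ``$i$-twisted'' copies to the same corner just gives another self-adjoint combination and cannot separate $c_{11}$ from $c_{11}^*$. You need an off-diagonal placement using a partial isometry you can already recover, which is the paper's device. Take $u=\sum_s\tilde e^{(s)}_{12}$, a partial isometry from $q_2$ onto $q_1$, and $d=vcv^*\in qAq$. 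The summand $\lambda_3(du+u^*d^*)$ gives $q_1yq_2=\lambda_3du$, hence $d=\lambda_3^{-1}q_1yq_2u^*$ and $ece=v^*dv$.

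In your allocation $u$ links $q_2,q_3$, so nothing plays this role. Your $wcfu^*$ term is superfluous, and in any case $w^*(wcfu^*)u=q_5cq_3$, not $cf$.

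Once these are fixed, generating all of $E=\bigoplus_s M_{n_s}\otimes M_5$ is a legitimate variant. You would diagonalize each $a_0^{(s)}$, add a diagonal perturbation compatible with the $q_i$, and let $z$ connect minimal projections through positions not occupied by the non-$E$ summands (e.g.\ avoiding $q_1\times q_1$). This makes $fcf\in_\eta E\subseteq C^*(a',b')$ immediate. The paper instead recovers only $\bigoplus_s 1_{M_{n_s}}\otimes M_5$ and encodes $c_0$ via $c_0w$.

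Finally, the non-unital case cannot be run in $A^\sim$. That algebra is not simple, so strict comparison and Lemma~\ref{lem:M_5.tracial.approx} are unavailable there. The paper instead takes an approximate unit of projections $(e_n)$ and applies the unital case to the simple unital TAF corners $e_nAe_n$. It then concludes with Proposition~\ref{thm:gr.ind.limits}.
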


\begin{proof}
Let us begin by assuming that $A$ is unital. 
Let $\varepsilon > 0$, $a,b \in A_{sa}$ and $c \in A$. 
According to the definition of generator rank, we will construct $x,y \in A_{sa}$ such that 
\begin{equation}\label{mainthm:eq.mu}
	a\approx_\varepsilon x, \quad b \approx_\varepsilon y, \quad \mathrm{dist}(c,C^*(x,y))< \varepsilon.
\end{equation} 

Set $\mathfrak F = \{a, b, c, c^*\}$ and $0 < \delta < \min \{\frac{\varepsilon}{15}, \frac{1}{12}\}$.
Thus, by Lemma~\ref{lem:M_5.tracial.approx}, there exists a subalgebra $\bigoplus_{s=1}^r M_{n_s}(\mathbb C) \otimes M_5^{(s)} \subset A$ with unit $f$ such that $M_5^{(s)} \cong M_5(\mathbb C)$ and
    \begin{enumerate}
        \item $fh \approx_\delta hf$ for all $h \in \mathfrak F$,

        \item there exist $a_0, b_0, c_0  \in \bigoplus_{s}^r M_{n_s}(\mathbb C) \otimes 1_{M_s}$ such that 
        $$ 
        faf \approx_\delta a_0, \qquad fbf \approx_\delta b_0, \qquad fcf \approx_\delta c_0,
        $$

        \item $\tau(1-f) < \delta$ for all $\tau \in T(A)$.

    \end{enumerate}
Let $(e_{ij}^{(s)})$ be the matrix units of $M_{5}^{(s)} $ and for $1 \leq i \leq 5$, set 
\begin{equation}
	\tilde{e}^{(s)}_{ij}:= 1_{M_{n_s}} \otimes e^{(s)}_{ij},   \qquad q_i := \bigoplus_{s=1}^r \tilde{e}^{(s)}_{ii}.
\end{equation}
It follows immediately that $(\tilde{e}_{ij}^{(s)})$ also satisfies the matrix unit relations.
We also define
\begin{equation}
    u := \bigoplus_{s=1}^r \tilde{e}_{12}^{(s)} \qquad \text{and} \qquad w:= \bigoplus_{s=1}^r \tilde{e}_{34}^{(s)}. 
\end{equation}
These are partial isometries satisfying 
\begin{equation}
	uu^* = q_1, \qquad u^*u = q_2, \qquad ww^* = q_3, \qquad w^* w = q_4.
\end{equation}

On the other hand, we have
\begin{equation}
	\tau(e) < \delta  \leq \frac{1}{6}.
\end{equation}
Then $5 \tau(e) < 1 - \tau(e)$ which yields
\begin{equation}
      \tau(e) < \frac{1-\tau(e)}{5} = \frac{\tau(f)}{5} = \tau(q_1).
\end{equation}
Then by strict comparison, there is $v \in qAe$ such that
\begin{equation}
	v^*v = e \qquad \text{and} \qquad q := vv^* \leq q_1.
\end{equation}
We define
\begin{equation}
	d := vcv^* = qv c v^* q \in qAq.
\end{equation}
Thus 
\begin{equation}
    v^* d v = v^* vcv^*v = ece.
\end{equation}

Since $eAe$ has real rank zero, we can find a self-adjoint element $a_1 \in eAe$ with finite spectrum such that $a_1 \approx_{\delta} eae$. 
After an arbitrarily small perturbation, we can assume $a_0$ and $a_1$ have disjoint spectra and are invertible.
We can write $a_0 = (a_0^{(s)}) \in \bigoplus_{s=1}^r M_{n_s}(\mathbb{C}) \otimes 1_{C_s}$. As before, after a further arbitrarily small perturbation, we can assume that the elements $a_0^{(s)}$ have pairwise disjoint spectra.
Let $0 <\mu < \delta$ be smaller than the distance between any two distinct elements in $\sigma(a_0)\cup\sigma(a_1)\cup\{0\}$.
With all this setup, we can now define our first generator:  
\begin{equation}\label{mainthm:eq.xn}
		x := a_1 + \sum_{s=1}^r \sum_{k=1}^5 \left( a_0^{(s)} + \frac{k \mu}{5} \right) \otimes e_{kk}^{(s)}.
\end{equation}
In order to somewhat simplify our notation, let us define
\begin{equation}
	z: = \bigoplus_{s=1}^r \sum_{j=1}^4 (\tilde{e}_{5j}^{(s)} + \tilde{e}_{j5}^{(s)}).
\end{equation}
We choose 
\begin{equation}
	0 < \lambda_1 , \ldots , \lambda_4 < \min\left\{\frac{\delta}{2}, \frac{\delta}{2(\|c\|+\delta)} \right\}    
\end{equation}
and define
\begin{equation}
	y:= ebe + b_0 f + \lambda_1 z +\lambda_2 \left( v + v^* \right) + \lambda_3 \left(du + u^* d^* \right) + \lambda_4 (c_0 w + w^*c_0^* ).
\end{equation}
Our choice of $\lambda_i$ guarantees that the terms in $y$ with the corresponding coefficient $\lambda_i$ have norm smaller than $\delta$.
By construction, $x$ and $y$ are self-adjoint elements. We define $B:= C^*(x,y)$.

\begin{claim}
	The generators of $B$ satisfy $x \approx_\varepsilon a$ and $y \approx_\varepsilon b$.
\end{claim}

\begin{proof}[Proof of claim]
Let us verify first that $x$ is close to $a$. Indeed,
\begin{equation} 
\begin{aligned}
	x-a & = x - eae - fae - eaf - faf \\
	& \approx_{8\delta} x-eae-faf \\
	& = x - eae - fpapf \\
	& \approx_{\delta} a_1 + \sum_{s=1}^r \sum_{k=1}^5 \left( a_0^{(s)} + \frac{k \mu}{5} \right) \otimes e_{kk}^{(s)} -eae - fa_0f \\
	& \approx_{\delta} \sum_{s=1}^r \sum_{k=1}^5 \left( a_0^{(s)} + \frac{k \mu}{5} \right) \otimes e_{kk}^{(s)} - \sum_{s=1}^r \sum_{k=1}^5 a_0^{(s)} \otimes e_{kk}^{(s)} \\
	& = \sum_{s=1}^r \sum_{k=1}^5 \frac{k \mu}{5} \otimes e_{kk}^{(s)} \\
	& \approx_{5 \delta} 0,
\end{aligned}
\end{equation}
where we used \eqref{eq:fF_approxcommute} on the second line and the definitiond of $a_0$ and $a_1$ on the fourth and  fifth lines, respectively.

On the other hand, the second generator satisfies
\begin{equation} 
\begin{aligned}
	y-b & \approx_{4 \delta} ebe + b_0 f - (ebe+fbe+ebf+fbf)  \\
	& \approx_{8\delta} ebe + b_0 f - ebe - fbf \\
	& \approx_{\delta} 0,
\end{aligned}
\end{equation} 
where we used \eqref{eq:fF_approxcommute} on the second line and the definition of $b_0$ on the third.
Our proposed generators then satisfy $a \approx_\varepsilon x$ and $b \approx_\varepsilon y$.
\end{proof}

\begin{claim}
	The elements $e, \tilde{e}_{ii}^{(s)}$ and $q_i$ belong to $B$ for $1 \leq i \leq 5$.
\end{claim}

\begin{proof}[Proof of claim]
Our choice of $\mu$ and the fact that $a_1$ and all $a_0^{(s)}$ are invertible guarantees that using a suitable continuous function we have $e$ and $\tilde{e}_{ii}^{(s)}$ are in $C^*(x) \subset B$ for $1 \leq s \leq r$ and $1 \leq i \leq 5$.
It then follows that 
\begin{equation}
    q_i = \bigoplus_{s=1}^r \tilde{e}_{ii}^{(s)}  \in C^*(x) \subset B \qquad  \text{for} \qquad 1 \leq i \leq 5. \qedhere
\end{equation}
\end{proof}

\begin{claim}\label{claim:e5i}
	The elements $\tilde{e}_{5i}^{(s)}$ and $\tilde{e}_{i5}^{(s)}$ belong to $B$ for $1 \leq i \leq 4$ and $1 \leq s \leq r$. 
\end{claim}

\begin{proof}[Proof of claim]
This claim follows from computing $\tilde{e}_{55}^{(s)} y \tilde{e}_{jj}^{(s)}$ for $1 \leq j \leq 4$. Since our expression for $y$ is rather long, we break this down into several smaller calculations.

First, since $b_0 \in \bigoplus_{s=1}^r M_{n_s}(\mathbb{C}) \otimes 1_{C_s}$, it commutes with all $\tilde{e}_{ij}^{(s)}$ and we have the following,
\begin{equation}
    \tilde{e}_{ii}^{(s)} \cdot b_0 f \cdot \tilde{e}_{jj}^{(k)}  = \delta_{ks} b_0 \cdot \tilde{e}_{ii}^{(s)} \tilde{e}_{jj}^{(s)} = \delta_{ks} \delta_{ij} b_0 \cdot \tilde{e}_{ii}^{(s)}. 
\end{equation}
Likewise, 
\begin{equation}
	\begin{aligned}
   \tilde{e}_{ii}^{(s)} \cdot z \cdot \tilde{e}_{jj}^{(k)}& = \tilde{e}_{ii}^{(s)} \left( \bigoplus_{l=1}^r \sum_{m=1}^4 (\tilde{e}_{5m}^{(l)} + \tilde{e}_{m5}^{(l)}) \right)  \tilde{e}_{jj}^{(k)} \\
    & = \sum_{m=1}^4 \left( \delta_{i5} \tilde{e}_{5m}^{(s)} \tilde{e}_{jj}^{(k)} + e_{i5}^{(s)}e_{jj}^{(k)}\right) \\
    &= \delta_{ks} \left(\delta_{i5} \tilde{e}_{5j}^{(s)} + \delta_{j5} \tilde{e}_{i5}^{(s)} \right). 
\end{aligned}
\end{equation}
Also, using that $v \in qAe$, that $\tilde{e}_{ii}^{(s)} \leq f$, and that $e$ and $\tilde{e}_{ii}^{(s)}$ are orthogonal, we get
\begin{equation}
    \tilde{e}_{ii}^{(s)} \left( v + v^* \right) \tilde{e}_{jj}^{(k)} = 0.
\end{equation}
In the same manner,  since $q \leq q_1$ and $q_1$ is orthogonal to $q_j$ for all $j \neq 1$, it follows that $q$ is orthogonal to $q_j$ for $j \neq 1$. Using that $d \in qAq$, we obtain
\begin{equation}
	\begin{aligned} 
    \tilde{e}_{ii}^{(s)} \left( du \right) \tilde{e}_{jj}^{(k)} & = \tilde{e}_{ii}^{(s)} \cdot d \left( \bigoplus_{l=1}^r  \tilde{e}_{12}^{(l)} \right) \tilde{e}_{jj}^{(k)}  \\
    & = \delta_{2j} \tilde{e}_{ii}^{(s)} \cdot d \cdot \tilde{e}_{12}^{(k)}  \\
    & = \delta_{2j} \tilde{e}_{ii}^{(s)} \cdot q_1 d \cdot \tilde{e}_{12}^{(k)}  \\
    & =  \delta_{2j} \delta_{1i} \tilde{e}_{11}^{(s)} \cdot d \cdot \tilde{e}_{12}^{(k)}. 
\end{aligned}
\end{equation}
Similarly,
\begin{equation}
    \tilde{e}_{ii}^{(s)} \left( u^*d^* \right) \tilde{e}_{jj}^{(k)} 
    = \delta_{2i} \delta_{1j} \tilde{e}_{21}^{(k)} \cdot d^* \cdot \tilde{e}_{11}^{(s)}.
\end{equation}
Proceeding as above, using that $c_0 \in \bigoplus_{s=1}^r M_{n_s}(\mathbb C) \otimes 1_{C_s}$, which therefore commutes with all $\tilde{e}_{ij}^{(s)}$, we obtain
\begin{equation} 
\begin{aligned}
    \tilde{e}_{ii}^{(s)} \left( c_0 w \right) \tilde{e}_{jj}^{(k)} & = c_0 \cdot \tilde{e}_{ii}^{(s)} \cdot \left( \bigoplus_{l=1}^r \tilde{e}_{34}^{(l)} \right) \cdot \tilde{e}_{jj}^{(k)} \notag \\
    & = \delta_{3i} c_0 \cdot \tilde{e}_{34}^{(s)} \cdot \tilde{e}_{jj}^{(k)} \notag \\
    & = \delta_{3i} \delta_{4j} \delta_{ks} \cdot c_0 \cdot \tilde{e}_{34}^{(s)} 
\end{aligned}
\end{equation}
and
\begin{equation}
    \tilde{e}_{ii}^{(s)} \left( w^*c_0^* \right) \tilde{e}_{jj}^{(k)} = \delta_{4i} \delta_{3j} \delta_{ks} \cdot c_0^* \cdot \tilde{e}_{43}^{(s)}.
\end{equation}
Combining these computations and using that $e$ is orthogonal to $\tilde{e}_{ii}^{(s)}$, we find for all $1 \leq j \leq 4$ that
\begin{equation} 
\begin{aligned}
    0 & = \tilde{e}_{55}^{(s)} \left( ebe \right) \tilde{e}_{jj}^{(s)} \\ 
    & = \tilde{e}_{55}^{(s)} \left( b_0 f \right) \tilde{e}_{jj}^{(s)}  \\
    & = \tilde{e}_{55}^{(s)} \left( v + v^*\right) \tilde{e}_{jj}^{(s)}   \\
    & = \tilde{e}_{55}^{(s)} \left( du + u^* d^*\right) \tilde{e}_{jj}^{(s)}  \\
    & = \tilde{e}_{55}^{(s)} \left( c_0 w + w^* c_0^* \right) \tilde{e}_{jj}^{(s)}.
\end{aligned}
\end{equation}
Hence, for all $1 \leq j \leq 4$ and $1 \leq s \leq r$,
\begin{equation}
    \lambda_1 \tilde{e}_{5j}^{(s)} = \tilde{e}_{55}^{(s)} \cdot y \cdot \tilde{e}_{jj}^{(s)} \in B.
\end{equation}
This proves the claim. 
\end{proof}

\begin{claim}
	The elements $u, v$ and all the matrix units $\tilde{e}_{ij}^{(s)}$ belong to $B$.
\end{claim}
\begin{proof}[Proof of the claim]
The matrix units belong to $B$ by the following calculation:
\begin{equation}
    \tilde{e}_{ij}^{(s)} = \tilde{e}_{i5}^{(s)} \tilde{e}_{5j}^{(s)} \in B.
\end{equation}
Since $u$ and $v$ are spanned by elements of this form, we obtain that $u$ and $w$ are also elements in $B$.

We proceed to show that $v$ also belongs to $B$. To this end, we calculate $q_1 y e$.
By construction, $d, q, q_1, u, w, z \in fAf$. Since $e$ and $f$ are orthogonal we obtain
\begin{equation}
    q_1 ebe = q_1 b_0 f e = q_1 z e = q_1 (du + u^* d^*) e = 0.
\end{equation}
On the other hand, since $c_0$ commutes with $q_1$ and $w$, the previous argument yields
\begin{equation}
	q_1 (c_0 w + w^* c_0^*) e = c_0^* q_1 w^* e = 0.
\end{equation}
Combining these calculations with $v \in q_1Ae$ yields
\begin{equation}
    q_1 y e = \lambda_2 q_1 \left( v + v^* \right) e = \lambda_2 q_1  v e = \lambda_2 v.
\end{equation}
This shows that
\begin{equation}
    v = \lambda_2^{-1} q_1 y e \in B. 
\end{equation}
\end{proof}

\begin{claim}
	The elements $d, ece$ and $c_0f$ belong to $B$.
\end{claim}
\begin{proof}[Proof of claim]
Proceeding as before, we compute $q_1 y q_2$ to deduce that $d$ lies in $B$.
Applying the calculations from the proof of Claim \ref{claim:e5i}, we obtain
\begin{equation} 
	0 = q_1 ebe q_2 = q_1 b_0 f q_2 = q_1 z q_2 = q_1 (v + v^*) q_2 = q_1 \left( c_0 w + w^* c_0^* \right) q_2.
\end{equation}
As $d\in q_1Aq_1$ and $u\in q_1Aq_2$, we have
\begin{equation}
    q_1 y q_2
    = \lambda_3 q_1 \left( du + u^* d^* \right) q_2 = \lambda_3 q_1 d u q_2 = \lambda_3 d u. 
\end{equation}
Using that $uu^* = q_1$, $q \leq q_1$ and $d \in qAq$, we obtain
\begin{equation}
    d = \lambda_3^{-1} q_1 y q_2 u^* \in B. 
\end{equation}
In particular, this implies 
\begin{equation}
    ece = v^* d v \in B.
\end{equation}
Using similarly that
\begin{equation}
    q_3 e = q_3 b_0 f q_4 = q_3 z q_4 = q_3  \left( du + u^* d^* \right) q_4 = 0,
\end{equation}
we obtain
\begin{equation}
     q_3 y q_4 = \lambda_4 q_3 \left( c_0 w + w^* c_0^* \right) q_4 = \lambda_4 c_0 w.
\end{equation}
This yields
\begin{equation}
    c_0 q_3 = c_0 ww^* = \lambda_4^{-1} q_3 y q_4 w^* \in B.
\end{equation}
By conjugating $c_0q_3$ by $\bigoplus_{s=1}^r 1_{M_s}\otimes e^{(s)}_{3i} \in B$ for all $1\leq i \leq 5$ and noting that these elements commute with $c_0 \in \bigoplus_{s=1}^r M_{n_s} \otimes 1_{C_s}$, it follows that
\begin{equation}
    c_0q_i = \left(\bigoplus_{s=1}^r 1_{M_s}\otimes e^{(s)}_{i3}\right) c_0 q_3 \left(\bigoplus_{s=1}^r 1_{M_s}\otimes e^{(s)}_{3i}\right) \in B.
\end{equation}
This in turn implies
\begin{equation}
    c_0 f = \sum_{i=1}^5 c_0 q_i \in B.
\end{equation}
This completes the proof of the claim.
\end{proof}

We will finish the proof of the theorem in the unital case by showing that $c$ is close to $B$. Indeed, by \eqref{eq:fF_approxcommute}, we have
\begin{align}
    c = fcf + ecf + fce + ece \approx_{8\delta} fcf + ece \approx_{\delta} c_0 f + ece \in B.
\end{align}
Therefore, we have shown that $x \approx_\varepsilon a$ and $y \approx_\varepsilon b$ and $c \in_\varepsilon B$. This proves that the generator rank of $A$ is at most 1.

If $A$ is not unital, we use that $A$ has an approximate unit $(e_n)$ consisting of projections. Then, we set $A_n:= e_n A e_n$, which is a unital, simple \ensuremath{C^*}-algebra. Since $A_n$ is a hereditary subalgebra of $A$, it follows that $A_n$ is also tracially AF with real rank zero and strict comparison (\cite{Lin01}). 
Then, by the above argument, we obtain that $\mathrm{gr}(A_n) \leq 1$ for all $n \in \mathbb{N}$. By Theorem \ref{thm:gr.ind.limits}, we finally obtain $\mathrm{gr}(A) \leq 1$.
\end{proof}

We point out that this theorem actually produces new examples of singly generated \ensuremath{C^*}-algebras. For instance, $C^*$-algebras in the class of non-$\mathcal {Z}$-stable and non-nuclear tracially AF algebra constructed in \cite{NW21} are singly generated.

\section{$C^*$-algebras with a Cantor spectrum subalgebra} \label{sec:cantor}
\subsection{The area and support of an element}
\begin{defn}
    Let $A$ be a unital \ensuremath{C^*}-algebra. A \emph{partition of unity} for $A$ is a finite set $\{p_i\}_{i=1}^n$ of non-zero, mutually orthogonal projections in $A$ which add up to $1_A$.
    
    A \emph{refinement} of $\{p_i\}_{i=1}^n$ is another partition of unity $\{p_{i,j}\mid 1\leq i\leq n, 1\leq j \leq n_i\}$ for $A$ such that $p_i = \sum_{j=1}^{n_i} p_{i,j}$ for all $i$.
\end{defn}

\begin{notation}
    Let $a_1, ..., a_n$ and $b_1, ..., b_m$ be positive elements in a \ensuremath{C^*}-algebra $A$, with $n\geq m$ and the $b_i$ mutually orthogonal. We say the tuple $(a_i)_i$ is \emph{subordinate} to $(b_j)_j$ if the assignment
    \begin{equation}
        \{1, ..., n\} \to \{1, ..., m\}: i \mapsto j\quad \text{ with } j \text{ such that } a_i \leq b_j
    \end{equation}
    is a well-defined, surjective, order-preserving map. In other words, each $a_i$ is under precisely one $b_j$, each $b_j$ has at least one $a_i$ under it, and the $a_i$ are grouped by the $b_j$ they are under, in the same order as the $b_j$.
\end{notation}

\begin{rmk}
    Any tuple $(q_k)_k$ of mutually orthogonal projections which is subordinate to a partition of unity $(p_i)_i$, induces a refinement of $\{p_i\}_i$ by adding to $\{q_k\}_k$ the non-zero defects (if there are any) of each $p_i$ by the sum of all $q_k \leq p_i$. We nonetheless introduce the above notion to make notation in some proofs less cumbersome.
\end{rmk}

Inspired by Shen \cite{Shen}, we define the area and support of an element $x$ in a $C^*$-algebra $D$ with respect to a partition of unity. Note that in the case when all the projections have the same trace, this recovers Shen's definition \cite[Definition~2.1]{Shen}.

\begin{defn}
\label{def:area}
    Let $D$ be a unital \ensuremath{C^*}-algebra with a faithful trace $\tau$ and $p_{1},\dots,p_{n} \in D$ a partition of unity for $D$.
    The \emph{area} of an element $x \in D$ with respect to $\{p_{i}\}_{i=1}^{n}$ and $\tau$ is
    \begin{equation}
        \mathcal{I}(x\mid \{p_{i}\}_{i=1}^{n})=\sum_{i,j:p_{i}xp_{j}\neq0}\tau(p_{i})\tau(p_{j}).
    \end{equation}
    The \emph{support} of $x$ with respect to $\{p_{i}\}_{i=1}^{n}$ and $\tau$ is
    \begin{equation}
        \mathcal{S}(x\mid \set{p_i}_{i=1}^n)=\sum_{ p_ix\neq0 \text{ or } xp_i\neq0} p_i.
    \end{equation}
\end{defn}

\begin{notation}
    We will also use the above notation in the situation where $D$ is unitally contained in a unital $C^*$-algebra $A$ and $x$ is an element of $A$. We will also omit ``with respect to $\{p_{i}\}_{i=1}^{n}$ and $\tau$'' in writing when this data is clear from the context.
    Moreover, if the indexing sets are clear from context, we will write $\{p_i\}$ for $\{p_{i}\}_{i=1}^{n}$.
\end{notation}

\begin{defn}
    \label{def:smallarea}
    Let $A$ be a unital $C^*$-algebra, and $D$ a unital $C^*$-subalgebra of $A$ with faithful trace $\tau$. We say an element $x\in A$ has \emph{small area} with respect to $(D,\tau)$ if for every $\varepsilon>0$, there exists a partition of unity $\set{p_i}_{i=1}^n$ for $D$ (i.e.\ each $p_i$ is in $D$) such that
    \begin{equation}
        \mathcal{I}(x\mid \set{p_i}_{i=1}^n) < \varepsilon.
    \end{equation}
\end{defn}

For this notion to be sensible, there need to exist partitions of unity for $D$ of projections with arbitrarily small trace.  This will be case for our purposes, in which $D$ is $C(X)$ with $X$ the Cantor space and $\tau$ is diffuse, as a consequence of Lemma~\ref{lemma:equalizing-projections}. For now, we will make the following elementary observations related to the area and support. 

\begin{lemma} \label{lemma:areaprops}
    Let $A$ be a unital $C^*$-algebra, $D$ a unital $C^*$-subalgebra with a faithful trace $\tau$, $\{p_i\}_{i=1}^n$ a set of pairwise orthogonal projections in $D$ summing up to $p \in D$, and $x \in A$.
    \begin{enumerate}
        \item \label{lemma:areaprops_commute} If $x$ commutes with $\{p_i\}_{i=1}^n$, then
        \begin{equation}
        \mathcal{I}(x\mid \{p_i\})\leq n \max_{1\leq i\leq n}\tau(p_i)^2.
        \end{equation}
        \item \label{lemma:areaprops_refine} For any refinement $\{p_{i,j}\}_{1\leq i\leq n, 1\leq j\leq n_{i}}$ of $\{p_i\}_{i=1}^n$, we have
        \begin{equation}
        \mathcal{I}(pxp\mid \{p_{i,j}\}_{1\leq i\leq n, 1\leq j\leq n_{i}})\leq \mathcal{I}(pxp\mid \{p_{i}\}_{i=1}^{n}).
        \end{equation}
        \item \label{lemma:areaprops_onesided} For any $y,z \in pA$, we have
        \begin{equation}
        \mathcal{I}(yz^*\mid \{p_i\})\leq \tau(\mathcal{S}(yy^*\mid \{p_i\})) \tau(\mathcal{S}(zz^*\mid \{p_i\})).
        \end{equation}
    \end{enumerate}
\end{lemma}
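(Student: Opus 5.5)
Each of the three parts comes straight from the definitions of $\mathcal{I}$ and $\mathcal{S}$. The common idea is to identify which pairs $(i,j)$ can contribute to the sum $\sum_{p_ixp_j\neq 0}\tau(p_i)\tau(p_j)$, and then bound the weights $\tau(p_i)\tau(p_j)$ by trace estimates. Throughout we use that $\tau$ is faithful and positive, so $\tau(p_i)>0$ and $\tau$ is additive on orthogonal projections. Since $\sum_i p_i=p$ need not be $1$, we read $\mathcal{I}(\,\cdot\mid\{p_i\})$ for such a family by the same formula.

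\emph{Part (1).} If $x$ commutes with each $p_i$, then for $i\neq j$ we get $p_ixp_j=xp_ip_j=0$. Only diagonal pairs $(i,i)$ can contribute, so
\begin{equation*}
\mathcal{I}(x\mid\{p_i\})\le\sum_{i=1}^n\tau(p_i)^2\le n\max_i\tau(p_i)^2.
\end{equation*}

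\emph{Part (2).} Group the terms of the refined sum by the coarse pair $(i,k)$ with $p_{i,j}\le p_i$ and $p_{k,l}\le p_k$. If $p_{i,j}\,pxp\,p_{k,l}\neq 0$, then $p_i\,pxp\,p_k\neq 0$, because $p_{i,j}(p_i pxp p_k)p_{k,l}=p_{i,j}pxpp_{k,l}$. So every contributing refined pair lies over a contributing coarse pair. For a fixed coarse pair, additivity gives
\begin{equation*}
\sum_{j,l}\tau(p_{i,j})\tau(p_{k,l})=\tau(p_i)\tau(p_k).
\end{equation*}
Summing over the contributing coarse pairs yields the inequality. Refinement only ever drops terms, never adds weight.

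\emph{Part (3).} This is the step that needs a little care. Suppose $p_i yz^*p_j\neq 0$. Then $p_iy\neq 0$, and hence $p_iyy^*p_i=(p_iy)(p_iy)^*\neq 0$, so $p_i(yy^*)\neq 0$. Thus $p_i\le\mathcal{S}(yy^*\mid\{p_i\})$. Symmetrically, $z^*p_j\neq 0$ gives $p_j(zz^*)\neq0$, so $p_j\le\mathcal{S}(zz^*\mid\{p_i\})$. The contributing pairs therefore lie in $I\times J$, where $I=\{i: p_iyy^*\neq0 \text{ or } yy^*p_i\neq 0\}$ and $J$ is defined likewise for $zz^*$. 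Hence
\begin{equation*}
\mathcal{I}(yz^*\mid\{p_i\})\le\sum_{i\in I}\sum_{j\in J}\tau(p_i)\tau(p_j)=\tau\bigl(\mathcal{S}(yy^*\mid\{p_i\})\bigr)\,\tau\bigl(\mathcal{S}(zz^*\mid\{p_i\})\bigr),
\end{equation*}
where the last equality is additivity of $\tau$ on the orthogonal $p_i$.

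The hypothesis $y,z\in pA$ ensures that $yy^*$ and $zz^*$ live under $p$, so the supports are subprojections of $p$ and the bound is meaningful. The only real subtlety is using the implication $a\neq 0\Rightarrow aa^*\neq0$ to pass from nonvanishing of $p_iy$ to nonvanishing of $p_iyy^*$.
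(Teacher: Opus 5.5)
Your proposal is correct and follows the paper's proof essentially step for step. In (1) only diagonal pairs contribute; in (2) refined pairs are grouped over coarse pairs and additivity of $\tau$ is used; in (3) the implication $p_iy\neq 0\Rightarrow p_iyy^*\neq 0$ (via the C*-identity) confines the contributing pairs to $I\times J$. The paper states the two-sided equivalence, but only the direction you use is needed.
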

\begin{proof}
(\ref{lemma:areaprops_commute}):
    As
    \begin{equation}
        0 \neq p_i x p_j = x p_i p_j = \delta_{ij} x p_i
    \end{equation}
    implies $i=j$ for all $i,j$, it follows that
    \begin{equation}
         \mathcal{I}(x\mid \{p_i\})\leq \sum_{i=j} \tau(p_i)\tau(p_j) \leq n \max_{1\leq i\leq n}\tau(p_i)^2.
    \end{equation}   

(\ref{lemma:areaprops_refine}):
    This follows immediately from the fact that
    \begin{equation}
         \sum_{k,l: p_{i,k}xp_{j,l}\neq 0} \tau(p_{i,k})\tau(p_{j,l}) \leq \sum_{k,l} \tau(p_{i,k})\tau(p_{j,l}) = \tau(p_i)\tau(p_j)
    \end{equation}
    for all $i$ and $j$.

(\ref{lemma:areaprops_onesided}):
    We claim that 
    \begin{equation}
        xx^*p_i\neq 0\iff p_i xx^*\neq 0\iff p_ix\neq 0.
    \end{equation}
    The first equivalence is immediate, and so is the forward direction of the second.
    Suppose that $p_ixx^*=0$.
    Then $(p_ix)(p_ix)^*=p_ixx^*p_i=0$ and so $p_i x=0$.
    The same of course holds for $y$ in place of $x$.
    Hence
    \begin{equation}
        \begin{aligned}
            \mathcal{I}(xy^*\mid \{p_i\})&=\sum_{i,j:p_ixy^*p_j\neq0}\tau(p_i)\tau(p_j)\\
            &\leq \sum_{\substack{i:p_ix\neq0\\j:y^*p_j\neq 0}}\tau(p_i)\tau(p_j)\\
            &=\sum_{i:p_ixx^*\neq0\text{ or }xx^*p_i\neq 0}\tau(p_i)\sum_{j:p_jyy^*\neq0\text{ or }yy^*p_j\neq 0}\tau(p_j)\\
            &=\tau(\mathcal{S}(xx^*\mid\{p_i\}))\tau(\mathcal{S}(yy^*\mid \{p_i\})),
        \end{aligned}
    \end{equation}
    finishing the proof.
\end{proof}

Our methods rely on \cite[Theorem~4.1]{Shen}, which we recall next.
The theorem is stated for von Neumann algebras in \cite{Shen}, but the proofs apply\footnote{Note that in \cite[Theorem~4.1]{Shen}, the trace is only applied to the matrix projections belonging to the diagonal.} in the \ensuremath{C^*}-algebraic setting as well.

\begin{thm}[{\cite[Theorem~4.1]{Shen}}]
\label{thm:shen}
    Let $A$ be a \ensuremath{C^*}-algebra and $B\subseteq A$ a unital \ensuremath{C^{*}}-subalgebra with a faithful trace $\tau$.
    Suppose that $\set{e_{ij}}_{1\leq i,j\leq n}$ are matrix units of a unital copy of $M_n$ inside $A$ with $n\geq 4$ such that $e_{ii}\in B$ for all $i$.
    If $x_1,\dots,x_k$ are elements of $A$ such that
    \begin{equation}
        \sum_{i=1}^k\mathcal{I}(x_i\mid \set{e_{jj}}_{j=1}^n)=c^2
    \end{equation}
    with $c\leq 1/4$, then there is a projection $q\in \ensuremath{C^*}(x_1,\dots,x_k,\set{e_{ij}})$ such that
    \begin{equation}
        \tau(\mathcal{S}(q\mid \set{e_{ii}})\leq 2c + 2/n
    \end{equation}
    and 
    \begin{equation}
        \ensuremath{C^*}(q,\set{e_{ij}})=\ensuremath{C^*}(x_1,\dots,x_k,\set{e_{ij}}).
    \end{equation}
\end{thm}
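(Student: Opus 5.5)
Theorem~\ref{thm:shen} is Shen's \cite[Theorem~4.1]{Shen}, and the plan is to run his argument verbatim in the $C^*$-setting, checking that every step is algebraic and that $\tau$ only ever meets projections in $B$. First normalise. Write $x_i = \sum_{j,l} e_{jj}x_ie_{ll}$, and call an entry $(j,l)$ \emph{occupied} if $e_{jj}x_le_{ll}\neq 0$ for some index $l$ of the $x$'s. Since $\sum_{j,l} e_{jj}x e_{ll} = x$, we have $\ensuremath{C^*}(x_1,\dots,x_k,\{e_{ij}\}) = \ensuremath{C^*}(\{e_{jj}x_ie_{ll}\},\{e_{ij}\})$. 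Moreover each block may be transported into the corner $e_{11}Ae_{11}$ as $e_{1j}x_ie_{l1}$ using the matrix units. The hypothesis says that the total $\tau\otimes\tau$ weight of the occupied entries, $\sum \tau(e_{jj})\tau(e_{ll})$, equals $c^2$.

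Next choose an index set $J\subseteq\{1,\dots,n\}$ with $\sum_{j\in J}\tau(e_{jj})\le 2c+2/n$. The aim is to injectively assign each occupied entry $(j,l)$ a pair of slots $(j',l')$ with $j',l'\in J$ and $j'\neq l'$, such that distinct occupied entries get distinct off-diagonal slots of $J\times J$. The counting here is Shen's area argument. The number of off-diagonal slots in $J\times J$, weighted by trace, is at least about $(\sum_J\tau)^2$ minus the diagonal. When $c\le 1/4$ and $n\ge 4$, choosing $J$ greedily (by trace, with the $2/n$ term absorbing the rounding of one extra projection) gives enough room. This bookkeeping uses $\tau$ only on the $e_{jj}\in B$, which is exactly the footnoted observation.

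Then form a single self-adjoint element $h=\sum_{(j,l)\ \text{occupied}} \bigl(e_{j'j}x_{(j,l)}e_{ll'} + (\cdot)^*\bigr)$ supported in $e_J A e_J$, where $e_J=\sum_{J}e_{jj}$. Add a diagonal part $\sum_{j\in J} \lambda_j e_{jj}$ with distinct scalars $\lambda_j$, chosen large enough that $e_J\pm$ shifts are invertible. The projection $q$ is then taken as a suitable spectral projection or range projection built from $h$ plus this diagonal, in Shen's manner (e.g.\ the projection onto the graph of the relevant operator, which is a continuous function of an invertible self-adjoint element and hence lies in the $C^*$-algebra). Its support is contained in $e_J$, which gives $\tau(\mathcal{S}(q\mid\{e_{ii}\}))\le 2c+2/n$.

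Finally, for recovery: from $q$ and the $e_{ij}$ one compresses, $e_{j'j'}(\cdot)e_{l'l'}$, to isolate each $e_{j'j}x_{(j,l)}e_{ll'}$ (this uses the injectivity of the slot assignment and $j'\neq l'$), then multiplies by $e_{jj'}$ and $e_{l'l}$ on either side to get back the blocks. The reverse inclusion is clear since $q$ was built inside $\ensuremath{C^*}(x_1,\dots,x_k,\{e_{ij}\})$. The main obstacle I expect is making the projection $q$ genuinely lie in the $C^*$-algebra: Shen uses Borel functional calculus, so I would instead use a graph-projection formula $\begin{pmatrix}(1+T^*T)^{-1} & \cdots\end{pmatrix}$, which involves only continuous functions of invertible elements and is therefore available in any $C^*$-algebra. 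From that projection $T$ is recovered algebraically.
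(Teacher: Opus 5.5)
Your overall plan matches the paper, which gives no argument beyond noting that Shen's proof transfers because $\tau$ is only evaluated on the $e_{jj}$. However, your reconstruction of the central step does not work as written.

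A single spectral projection of the self-adjoint $h$ need not exist in a $C^*$-algebra, and even when it does it cannot in general encode $h$. The graph projection of an operator living on $e_J$ lies in $M_2(e_JAe_J)$, so it needs a second block of $|J|$ indices disjoint from $J$. Hence the claim that $q$ is supported in $e_J$ is false: the support would be about $4c$ rather than $2c+2/n$, and it need not even fit into $n$ indices. The slots and the doubling have to be budgeted together:
\begin{itemize}
\item Choose disjoint $J_1,J_2\subseteq\{1,\dots,n\}$ with $|J_1|=|J_2|=N:=\lceil cn\rceil$. This is possible since $2N\le 2cn+2\le n$, which is exactly where $c\le 1/4$ and $n\ge 4$ are used; your sketch never uses them.
\item Place the transported blocks as the entries of one non-self-adjoint $T\in e_{J_1}Ae_{J_2}$. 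All $N^2$ positions are usable, there are no adjoint collisions, and no diagonal shift is needed.
\item Put $q=(e_{J_2}+T)R(e_{J_2}+T^*)$ with $R=(e_{J_2}+T^*T)^{-1}$ computed in $e_{J_2}Ae_{J_2}$. Then $q$ is a projection with $q\le e_{J_1}+e_{J_2}$, so $\tau(\mathcal{S}(q\mid\{e_{ii}\}))\le 2N/n\le 2c+2/n$.
\item Recovery: $e_{J_2}qe_{J_2}=R$ and $e_{J_1}qe_{J_2}=TR$. So $T=(e_{J_1}qe_{J_2})R^{-1}\in C^*(q,\{e_{ij}\})$ by spectral permanence in the corner, and conversely $q\in C^*(T,\{e_{ij}\})$.
\end{itemize}

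The counting must also be by cardinality, not by trace weight. Injectivity of the placement requires $N^2$ to be at least the number of nonzero blocks $e_{jj}x_ie_{ll}$, counted separately for each $i$. Your ``occupied entries'' merge different $x_i$ sharing a position, which loses information. This number equals $n^2c^2\le N^2$ precisely because $\tau(e_{jj})=1/n$ for all $j$. No greedy selection by trace can replace this, because for unequal diagonal traces the statement is actually false. Take $A=M_4(C([0,1]^9))$ and $B$ the diagonal subalgebra with faithful trace $\tau(\mathrm{diag}(f_1,\dots,f_4))=(1-3\eta)\int f_1+\eta\sum_{j\ge 2}\int f_j$. Let $x_r=t_r\otimes e_{22}$ for the coordinate functions $t_1,\dots,t_9$. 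Then $c=3\eta\le 1/4$ for small $\eta$, and $C^*(x_1,\dots,x_9,\{e_{ij}\})=A$. But any projection $q$ with $C^*(q,\{e_{ij}\})=A$ would give a continuous injection $y\mapsto q(y)$ of $[0,1]^9$ into the projections of $M_4$, a manifold of real dimension at most $8$. So the argument must use that all $\tau(e_{jj})$ are equal. This holds in Shen's setting and in the paper's application, where $e^{(l)}_{ij}=f_if_j^*$ with $f_i\in B$.
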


Also recall that any \ensuremath{C^*}-algebra $A$ generated by countably many commuting projections has 0-dimensional spectrum, which then embeds into $[0,1]$, and so $A$ is singly generated by \cite[Proposition~2]{Nagisa04}:

\begin{lemma}
    \label{lemma:single-generation-with-projections}
    Let $A$ be a \ensuremath{C^*}-algebra generated by countably many commuting projections.
    Then $A$ is generated by a single self-adjoint element.
\end{lemma}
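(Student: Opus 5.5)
The plan is to reduce the lemma to the commutative Gelfand picture and then apply Nagisa's criterion, which is the cited \cite[Proposition~2]{Nagisa04}. Let $(p_k)_{k\in\mathbb{N}}$ be the commuting projections generating $A$. Because they commute and are self-adjoint, the $*$-algebra they generate is commutative, so $A$ is a separable abelian $C^*$-algebra. Hence $A \cong C_0(\Omega)$ with $\Omega$ locally compact, Hausdorff and second countable. (If $A$ is unital, $\Omega$ is compact. Otherwise we may pass to the unitization $A^\sim = C(\Omega^+)$, since $A$ is singly generated by a self-adjoint element exactly when the relevant condition holds for $\Omega$ versus $\Omega^+$.)

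The first step is to show that $\Omega$ is totally disconnected and embeds in $[0,1]$, indeed in the Cantor set. Each $p_k$ corresponds to the indicator function $\mathbbm{1}_{U_k}$ of a compact open subset $U_k\subseteq\Omega$. Consider the map
\begin{equation*}
\Phi\colon \Omega\to\{0,1\}^{\mathbb{N}},\qquad \Phi(\omega)=(p_k(\omega))_k .
\end{equation*}
This map is continuous. It is also injective: the $p_k$ generate $A$, so they separate the points of $\Omega$, and if $p_k(\omega)=p_k(\omega')$ for all $k$, then every element of the generated algebra agrees at $\omega$ and $\omega'$. Moreover, no point of $\Omega$ is killed by all the $p_k$, since otherwise every element of $A$ would vanish there. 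Composing $\Phi$ with the standard embedding $\{0,1\}^{\mathbb{N}}\hookrightarrow[0,1]$ given by $(t_k)\mapsto\sum_k 2t_k3^{-k}$ yields a continuous injective function $h\colon\Omega\to[0,1]$.

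Next we write down the candidate generator explicitly:
\begin{equation*}
h=\sum_{k} 2\cdot 3^{-k}p_k .
\end{equation*}
This sum converges in norm, so $h$ is a self-adjoint (in fact positive) element of $A$. In the nonunital case $h$ vanishes at infinity, because each partial sum does. Since $h$ is injective on $\Omega$ and, on the one-point compactification, its value $0$ is attained only at $\infty$, Stone--Weierstrass shows that $C^*(h)=A$. Concretely, $C^*(h)$ is a self-adjoint subalgebra of $C_0(\Omega)$ that separates points and vanishes nowhere simultaneously. It separates points because $h$ is injective. It vanishes nowhere because $h(\omega)=0$ forces $p_k(\omega)=0$ for all $k$, which was excluded above. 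Alternatively, one can read off each $p_k$ directly by continuous functional calculus on $h$. The spectrum of $h$ lies in the Cantor set, and $p_k=g_k(h)$, where $g_k$ is the continuous function on the Cantor set that reads the $k$-th ternary digit; this function is continuous there because the set is totally disconnected.

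I expect no serious obstacle. The only delicate point is the nonunital case, where one must check that $0$ is not an "extra" value identifying distinct points. This is handled by the observation that a point $\omega$ with $h(\omega)=0$ would be annihilated by all generators. The ternary digit-reading argument handles this uniformly.
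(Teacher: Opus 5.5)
Your proof is correct and follows the paper's route: the paper observes that the spectrum is zero-dimensional, hence embeds in $[0,1]$, and cites \cite[Proposition~2]{Nagisa04}. You instead build the embedding explicitly as $h=\sum_k 2\cdot 3^{-k}p_k$ and check directly, via Stone--Weierstrass or digit-reading functional calculus, that $h$ generates $A$, which amounts to proving the needed case of Nagisa's criterion, including the nonunital point that $h$ vanishes nowhere on $\Omega$ (your aside about passing to the unitization is unnecessary, since the argument works directly in $C_0(\Omega)$).
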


\subsection{Approximations}
In this section, we cover a couple of results about the $C^*$-algebra of continuous functions on the Cantor space based on approximations from its von Neumann completions. Most of these facts are folklore; we include their proofs for completeness.

Let $X$ be the Cantor space, and $\tau$ a faithful trace on $C(X)$, with corresponding measure $\mu_\tau$ on $X$ and von Neumann completion $L^\infty(X,\mu_\tau)$. For $x\in L^\infty(X,\mu_\tau)$, we denote its $2$-norm by
\begin{equation}
    \norm{x}_\tau=\tau(x^*x)^{1/2}.
\end{equation}
Recall that for two projections $p,q\in L^\infty(X,\mu_\tau)$, Kadison's inequality gives 
\begin{equation}
    \abs{\tau(p)-\tau(q)}\leq \norm{p-q}_\tau.
\end{equation}

\begin{lemma}
    \label{lemma:approximating-projections}
    Let $X$ be the Cantor space, and $\tau$ a faithful trace on $C(X)$. For any projection $p\in L^{\infty}(X,\mu_\tau)$ and $\varepsilon>0$, there exists a projection $\tilde{p}$ in $C(X)$ such that $\lVert p-\tilde{p}\rVert_{\tau}\leq\varepsilon$.
\end{lemma}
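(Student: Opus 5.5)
Our plan is to use outer regularity of the Borel measure $\mu_\tau$ together with the fact that the clopen sets of the Cantor space form a basis of the topology closed under finite unions. A projection $p\in L^\infty(X,\mu_\tau)$ is the class of an indicator function $\mathbbm{1}_E$ for some Borel set $E\subseteq X$. Every $\tilde p\in C(X)$ that is a projection is of the form $\mathbbm{1}_K$ with $K$ clopen. Moreover, for indicator functions we have
\begin{equation}
\norm{\mathbbm{1}_E-\mathbbm{1}_K}_\tau^2=\mu_\tau(E\,\triangle\, K),
\end{equation}
so it suffices to find a clopen $K$ with $\mu_\tau(E\triangle K)\leq\varepsilon^2$.

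First, since $X$ is a compact metrizable space, $\mu_\tau$ is a finite Borel measure, hence regular. We therefore find a compact $C$ and an open $U$ with $C\subseteq E\subseteq U$ and $\mu_\tau(U\setminus C)\leq\varepsilon^2$. Second, we write $U$ as a union of clopen sets, which is possible because $X$ is zero-dimensional. By compactness of $C$, finitely many of these clopen sets cover $C$, and we let $K$ be their union. Then $K$ is clopen and $C\subseteq K\subseteq U$. This gives $E\triangle K\subseteq U\setminus C$, so $\mu_\tau(E\triangle K)\leq\varepsilon^2$. Setting $\tilde p=\mathbbm{1}_K\in C(X)$ finishes the argument.

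The only point requiring care is the identification of $L^\infty(X,\mu_\tau)=\pi_\tau(C(X))''$ with the usual $L^\infty$ space, so that its projections really are indicators of Borel sets and $\norm{\cdot}_\tau$ is the $L^2(\mu_\tau)$-norm. This is standard, since $\tau$ is given by integration against $\mu_\tau$ via the Riesz representation theorem. If one prefers to avoid it, an alternative route is available. One approximates $p$ in $\norm{\cdot}_\tau$ by elements of the $\norm{\cdot}_\tau$-dense subalgebra spanned by clopen indicators, which is norm dense in $C(X)$ by Stone–Weierstrass. One then takes the spectral projection $\mathbbm{1}_{\{h>1/2\}}$ of a real approximant $h$, which is again clopen because $h$ has finite range. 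A Chebyshev-type estimate then gives $\norm{p-\mathbbm{1}_{\{h>1/2\}}}_\tau\leq 2\norm{p-h}_\tau$.
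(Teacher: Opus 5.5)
Your proof is correct and follows the paper's argument essentially step for step. You reduce to finding a clopen $K$ with small $\mu_\tau(E\triangle K)$, use regularity of $\mu_\tau$ to find compact $C\subseteq E\subseteq U$ with $U$ open, and cover $C$ by finitely many clopens inside $U$. Your use of $\varepsilon^2$, via $\|\mathbbm{1}_E-\mathbbm{1}_K\|_\tau^2=\mu_\tau(E\triangle K)$, is slightly more careful than the paper, which writes the equivalent set-theoretic condition with $\varepsilon$ in place of $\varepsilon^2$ (harmless, since $\varepsilon$ is arbitrary).
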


\begin{proof}
    Since projections in $C(X)$ correspond 1-1 to clopen subsets and projections in $L^{\infty}(X)$ correspond 1-1 to measurable sets modulo null sets, the statement is equivalent to the following: For every $Y\subseteq X$ measurable there is $\tilde{Y}\subseteq X$ clopen such that 
    \begin{equation}
         \mu_{\tau}(Y\Delta\tilde{Y})\leq\varepsilon.
    \end{equation}

    Here $\Delta$ denotes the symmetric difference of two sets.
    As $\mu_{\tau}$ is regular, $Y$ is measurable and $X$ is compact, there is compact $K\subseteq Y$ and an open set $U$ such that $Y \subseteq U$ and $\mu_{\tau}(U\setminus K)\leq \varepsilon$. Since $U$ is open and $X$ is 0-dimensional, $U$ is a (possibly infinite) union of clopens so by compactness of $K$, there are finitely many clopens $\left\{U_{i}\right\}_{i=1}^{n}$ such that 
    \begin{equation}
       K\subseteq\bigcup_{i=1}^{n}U_{i}\subseteq U. 
    \end{equation}
    Let $\tilde{Y}=\bigcup_{i=1}^{n}U_{i}$ and observe that $Y\Delta\tilde{Y}\subseteq U\setminus K$ so $\mu_{\tau}(Y\Delta\tilde{Y})\leq\varepsilon$.
\end{proof}

We now show that the above can also be done for multiple projections while preserving orthogonality.

\begin{lemma}
\label{lemma:approximating-multiple-projections}
    Let $X$ be the Cantor space, and $\tau$ a faithful trace on $C(X)$. Let $p_{1},\dots,p_{n}\in L^{\infty}(X,\mu_\tau)$ be pairwise orthogonal projections and let $\varepsilon>0$. Then there are pairwise orthogonal projections $\tilde{p}_{1},\dots,\tilde{p}_{n}$ in $C(X)$ such that $\lVert p_{i}-\tilde{p}_{i}\rVert_{\tau}\leq\varepsilon$ for each $i$.
\end{lemma}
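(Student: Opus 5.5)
We first apply Lemma~\ref{lemma:approximating-projections} to each $p_i$ separately, with a smaller tolerance $\eta>0$ to be fixed later. This gives projections $q_1,\dots,q_n\in C(X)$ with $\norm{p_i-q_i}_\tau\leq\eta$. These $q_i$ are close to the $p_i$ but need not be pairwise orthogonal. To restore orthogonality we disjointify them in the standard way:
\begin{equation}
    \tilde{p}_1:=q_1,\qquad \tilde{p}_i:=q_i\Bigl(1-\bigvee_{j<i}q_j\Bigr)\quad (2\leq i\leq n).
\end{equation}
Since $C(X)$ is commutative, $\bigvee_{j<i}q_j$ is the indicator of a finite union of clopen sets and so lies in $C(X)$. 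Hence each $\tilde{p}_i$ is a projection in $C(X)$, namely the indicator of $\tilde{Y}_i\setminus\bigcup_{j<i}\tilde{Y}_j$. By construction the $\tilde{p}_i$ are pairwise orthogonal.

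It remains to estimate $\norm{p_i-\tilde{p}_i}_\tau$, which is easiest in measure-theoretic language. Write $Y_i$ for the measurable sets with $p_i=\mathbbm{1}_{Y_i}$; they are pairwise disjoint modulo null sets. Write $\tilde{Y}_i$ for the clopen sets with $q_i=\mathbbm{1}_{\tilde{Y}_i}$. For projections, $\norm{p-q}_\tau^2=\mu_\tau$ of the symmetric difference, so $\mu_\tau(Y_i\Delta\tilde{Y}_i)\leq\eta^2$. The set we subtract, $\tilde{Y}_i\cap\tilde{Y}_j$ for $j<i$, satisfies
\begin{equation}
    \tilde{Y}_i\cap\tilde{Y}_j\subseteq (\tilde{Y}_i\setminus Y_i)\cup(\tilde{Y}_j\setminus Y_j)\cup(Y_i\cap Y_j),
\end{equation}
and $Y_i\cap Y_j$ is null. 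Therefore
\begin{equation}
    \mu_\tau\Bigl(Y_i\,\Delta\,\bigl(\tilde{Y}_i\setminus\textstyle\bigcup_{j<i}\tilde{Y}_j\bigr)\Bigr)\leq \eta^2+\sum_{j<i}2\eta^2\leq 2n\eta^2.
\end{equation}
Choosing $\eta=\varepsilon/\sqrt{2n}$ gives $\norm{p_i-\tilde{p}_i}_\tau\leq\varepsilon$ for every $i$.

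No step here is a real obstacle. The only points that need care are the conversion between the $2$-norm and the measure of a symmetric difference, which involves a square, and checking that the disjointified sets are still clopen. Both follow directly from commutativity and zero-dimensionality, as in the proof of Lemma~\ref{lemma:approximating-projections}.
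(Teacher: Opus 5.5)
Your proof is correct and follows the same route as the paper: approximate each $p_i$ by a clopen projection $q_i$ via Lemma~\ref{lemma:approximating-projections}, then disjointify by $\tilde p_i=q_i(1-\sum_{k<i}\tilde p_k)$, which is the same as your $q_i(1-\bigvee_{j<i}q_j)$. The only difference is in the bookkeeping of the error. You bound measures of symmetric differences, so the errors add in $\eta^2$ and you get $\eta=\varepsilon/\sqrt{2n}$. The paper instead applies the triangle inequality in $\|\cdot\|_\tau$ with $\|q_iq_k\|_\tau\le 2\delta$, which gives $\delta=\varepsilon/(2n+1)$.
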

\begin{proof}
    Set $\delta=\frac{\varepsilon}{2n+1}$.
    Lemma~\ref{lemma:approximating-projections} provides projections $q_{1},\dots,q_{n}\in C(X)$ such that $\lVert q_{i}-p_{i}\rVert_{\tau}\leq \delta$. 
    Now let 
    \begin{align}
        \tilde{p}_{1}&=q_{1},\text{ and}\\
        \tilde{p}_{i}&=q_{i}(1-\sum_{k=1}^{i-1}\tilde{p}_{k}).
    \end{align}
    Note that these are pairwise orthogonal by construction.
    Since \begin{align}\norm{q_iq_k}_\tau\leq 2\delta+\norm{p_ip_k}_\tau=2\delta,
    \end{align} 
    we estimate
    \begin{equation}
    \begin{aligned}
        \lVert p_{i}-\tilde{p}_{i}\rVert_{\tau}&\leq\lVert p_{i}-q_{i}\rVert_{\tau}+\lVert q_{i}-\tilde{p}_{i}\rVert_{\tau}\\
        &\leq \delta+\lVert q_i \sum_{k=1}^{i-1}\tilde{p}_{k}\rVert_{\tau}\\
        &\leq\delta+ \sum_{k=1}^{i-1}\lVert q_{i}q_k\tilde{p}_{k}\rVert_{\tau}\\
        &\leq (2n+1)\delta=\varepsilon
    \end{aligned}
    \end{equation}
    as desired.
\end{proof}

As a particular consequence of the above in combination with Lemma~\ref{lemma:areaprops}.\ref{lemma:areaprops_commute}, we can now show that commuting elements have small area, provided the trace is diffuse on $C(X)$. 

\begin{cor} \label{cor:commuting_smallarea}
    Let $A$ be a unital $C^*$-algebra with a faithful trace $\tau$, containing a unital copy of $C(X)$ with $X$ the Cantor space. If $\tau$ is diffuse on $C(X)$, then any $a \in A \cap C(X)'$ has small area w.r.t.\ $(C(X),\tau)$.
\end{cor}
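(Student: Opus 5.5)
Since $a$ commutes with $C(X)$, it commutes with every projection of $C(X)$. By Lemma~\ref{lemma:areaprops}.\ref{lemma:areaprops_commute}, for any partition of unity $\{p_i\}_{i=1}^n$ in $C(X)$ we therefore get
\begin{equation}
    \mathcal{I}(a\mid \{p_i\}) \leq \sum_{i=1}^n \tau(p_i)^2 \leq \max_{i}\tau(p_i)\sum_{i=1}^n\tau(p_i) = \max_i \tau(p_i).
\end{equation}
So the whole task reduces to showing that for each $\varepsilon>0$ there is a partition of unity of $C(X)$ all of whose projections have trace less than $\varepsilon$. This is where diffuseness is used.

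To build such partitions I will work in $L^\infty(X,\mu_\tau)$. Since $\tau$ is diffuse on $C(X)$, the measure $\mu_\tau$ has no atoms. By the standard Sierpiński-type argument, $1$ can then be written as a sum of $N$ pairwise orthogonal projections $r_1,\dots,r_N\in L^\infty(X,\mu_\tau)$, each of trace $1/N$. Apply Lemma~\ref{lemma:approximating-multiple-projections} with tolerance $\eta$ to get pairwise orthogonal projections $\tilde r_1,\dots,\tilde r_N\in C(X)$ with $\norm{r_i-\tilde r_i}_\tau\leq\eta$. By Kadison's inequality, $\tau(\tilde r_i)\leq 1/N+\eta$.

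These $\tilde r_i$ need not sum to $1$. The defect $r_0:=1-\sum_i\tilde r_i$ is a projection in $C(X)$ with
\begin{equation}
    \tau(r_0)=1-\sum_i\tau(\tilde r_i)\leq \sum_i\abs{\tau(r_i)-\tau(\tilde r_i)}\leq N\eta.
\end{equation}
Choose $N>2/\varepsilon$ and $\eta<\varepsilon/(2N)$. Then every projection in $\{r_0,\tilde r_1,\dots,\tilde r_N\}$ has trace less than $\varepsilon$. Discarding zero projections leaves a partition of unity for $C(X)$ with $\max_i\tau(p_i)<\varepsilon$, and the displayed bound gives $\mathcal{I}(a\mid\{p_i\})<\varepsilon$. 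Hence $a$ has small area.

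The only step needing care is the first: a diffuse measure on $X$ admits measurable partitions into pieces of exactly equal measure. This is the intermediate value property of non-atomic measures, which holds by Sierpiński's theorem, applied iteratively. Alternatively, one can avoid exactness by partitioning into pieces of measure at most $1/N$, which is all the argument needs.
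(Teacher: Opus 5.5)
Your proof is correct and follows the paper's argument. You take an equal-trace partition of unity in $L^\infty(X,\mu_\tau)$ from diffuseness, lift it to orthogonal projections in $C(X)$ via Lemma~\ref{lemma:approximating-multiple-projections}, add the defect $1-\sum_i\tilde r_i$ as an extra piece, and conclude with Lemma~\ref{lemma:areaprops}.\ref{lemma:areaprops_commute}. The only minor differences are that you use the bound $\sum_i\tau(p_i)^2\le\max_i\tau(p_i)$ from that lemma's proof rather than its stated form $n\max_i\tau(p_i)^2$, which simplifies the choice of parameters, and that you explicitly discard zero projections, which the paper leaves implicit.
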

\begin{proof}
Pick $a\in A\cap C(X)'$ and $0<\varepsilon<1$. As $\tau$ is diffuse, there exists a partition of unity $\{p_i\}_{i=1}^n$ for $L^\infty(X,\mu_\tau)$ such that $\tau(p_i) = 1/n$ for all $i$ and $n\in\mathbb N$ such that
\begin{equation}
    \frac{4}{n^2} \leq \frac{\varepsilon}{n+1}\leq 1.
\end{equation}
Use Lemma \ref{lemma:approximating-multiple-projections} to find mutually orthogonal projections $\tilde p_1, ..., \tilde p_n$ with
\begin{equation}
    \|p_i-\tilde p_i\|_\tau \leq \varepsilon_0 := \frac{1}{n}\sqrt{\frac{\varepsilon}{n+1}} \leq \frac{1}{n}.
\end{equation}
Writing $\tilde p_{n+1} := 1- \sum_{i=1}^n \tilde p_i \in C(X)$, it follows from Kadison's inequality that
\begin{equation}
    \tau(\tilde p_{n+1}) \leq 1-\sum_{i=1}^n(1/n-\varepsilon_0) = n\varepsilon_0.
\end{equation}
Hence, $\{\tilde p_i\}_{i=1}^{n+1}$ is a partition of unity for $C(X)$ satisfying (using the same estimate as above for $\tau(\tilde p_i)$)
\begin{equation}
\begin{aligned}
    \max_{1\leq i \leq n+1} \tau(\tilde p_i)^2
        &\leq \max \left\{ (1/n+\varepsilon_0)^2, n^2\varepsilon_0^2 \right\} \\
        &\leq \max \left\{ 4/n^2, \varepsilon/(n+1) \right\} = \varepsilon/(n+1).
\end{aligned}
\end{equation}
As $a$ commutes with $C(X)$, we can use Lemma~\ref{lemma:areaprops}.\ref{lemma:areaprops_commute} to conclude that ${\mathcal{I}(a \mid \{\tilde p_i\}_{i=1}^{n+1})} \leq \varepsilon$, as desired.
\end{proof}

The next lemma lets us refine any partition of $1_B$ into a partition in which the projections have approximately equal traces.

\begin{lemma}
\label{lemma:equalizing-projections}
    Let $X$ be the Cantor space, $\tau$ a faithful, diffuse trace on $C(X)$ and $\{p_{i}\}_{i=1}^{n}$ a partition of the unit of $C(X)$.
    For any $\varepsilon>0$ and $N\in \mathbb{N}$, there is a refinement $\{p_{i,j}\}_{1\leq i\leq n, 1\leq j\leq n_{i}}$ of $\{p_{i}\}_{i=1}^{n}$ such that $\sum_{i,j}p_{i,j}=1$, $K=\sum_{i=1}^{n}n_{i}\geq N$ and
    \begin{equation}
       1-K\min_{i,j}\tau(p_{i,j})\leq\varepsilon.
    \end{equation}
\end{lemma}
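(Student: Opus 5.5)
The plan is to work in the measure space $(X,\mu_\tau)$ and use diffuseness to cut each $p_i$ into many pieces of nearly equal, small trace. Fix $\eta>0$ small, to be chosen in terms of $\varepsilon$, $n$ and $N$. Each $p_i$ corresponds to a clopen set $U_i$. Because the Cantor space has a basis of clopen sets and $\mu_\tau$ is a diffuse regular Borel measure, every point has clopen neighbourhoods of arbitrarily small measure. Concretely, for $x\in X$ the decreasing clopen neighbourhoods $V$ of $x$ satisfy $\mu_\tau(V)\to\mu_\tau(\{x\})=0$. By compactness, each $U_i$ can therefore be partitioned into finitely many clopen sets of measure at most $\eta$.

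The next step is to group these small pieces greedily into blocks of measure roughly $\eta$. Order the small clopen pieces of $U_i$ and add them one at a time to a running block, closing the block once its measure first reaches at least $\eta$. Every closed block then has measure in $[\eta,2\eta)$. The last leftover block in $U_i$ may have measure below $\eta$; merge it into the previous block of $U_i$, or keep it if it is the only one. Call the resulting clopen sets $U_{i,j}$ and set $p_{i,j}=\mathbbm{1}_{U_{i,j}}$. This refinement is exact, so $\sum_{i,j}p_{i,j}=1$.

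The one remaining nuisance is that some $p_i$ may have $\tau(p_i)<\eta$. Then its single block is too small, and it drags $\min_{i,j}\tau(p_{i,j})$ down. I expect this to be the main obstacle, and it must be handled by the choice of $\eta$. Since $\tau$ is faithful and the $U_i$ are nonempty clopen sets, $m:=\min_i\tau(p_i)>0$, so we may choose $\eta\le m$. With that choice every block of every $U_i$ has measure in $[\eta,3\eta)$; the factor $3$ comes from the merge.

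That bound is still too crude: we need $1-K\min\tau(p_{i,j})\le\varepsilon$, i.e.\ all blocks must have nearly the same trace, and a ratio $3$ between block sizes does not give this. The fix is to take the small pieces to have measure at most $\eta\theta$, with $\theta$ small, instead of at most $\eta$. Then the greedy blocks have measure in $[\eta,\eta(1+\theta))$, and after merging the leftover (which has measure $<\eta$) into a neighbour of the same $U_i$ we have the following. If $U_i$ contains $k_i\ge1$ full blocks, each block has measure at most $\eta(1+\theta)$, except the merged one, which has at most $\eta(2+\theta)$. To avoid this, spread the leftover over the $k_i$ blocks of $U_i$ by further splitting it into $\eta\theta$-pieces distributed one at a time. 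Each block then gains at most roughly $\eta/k_i+\eta\theta$. Since $k_i\ge\tau(p_i)/(2\eta)$, making $\eta$ small relative to $m$ ensures $k_i$ is large, so all blocks have measure in $[\eta,\eta(1+3\theta))$ once $\eta\le \theta m/2$.

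Now verify the conclusion. Write $K$ for the total number of blocks. Summing gives $1\le K\eta(1+3\theta)$, hence
\begin{equation}
1-K\min_{i,j}\tau(p_{i,j})\le 1-K\eta\le 1-\frac{1}{1+3\theta}\le 3\theta,
\end{equation}
which is at most $\varepsilon$ for $\theta=\varepsilon/3$. Also $K\ge 1/(\eta(1+3\theta))$, and this is at least $N$ once $\eta$ is small enough. Both constraints on $\eta$ are satisfied by taking $\eta$ small, which completes the plan.
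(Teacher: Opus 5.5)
Your proof is correct, and it takes a different route from the paper's.

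The paper works in $L^\infty(X,\mu_\tau)$. It approximates each $\tau(p_i)$ from below by a rational $s_i$, and uses diffuseness of the von Neumann algebra to cut $p_i$ into $n_i=ms_i$ orthogonal projections of trace exactly $1/m$. It then pulls these back to $C(X)$ via Lemma~\ref{lemma:approximating-multiple-projections}, multiplies them by $p_i$, controls the traces with Kadison's inequality, and finally absorbs the defect into $p_{i,1}$.

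You never leave $C(X)$. Diffuseness enters only as $\mu_\tau(\{x\})=0$. Together with a clopen neighbourhood base and compactness, this lets you chop each $U_i$ into clopen pieces of measure at most $\eta\theta$. A greedy packing then gives an exact refinement into nonzero blocks of measure at least $\eta$.

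Your argument is more elementary and self-contained: no GNS completion, no $2$-norm approximation, no final correction step. It also treats all $p_i$ uniformly, including those of very small trace; in the paper one has to choose $s_i>0$ so that $n_i\geq 1$. The paper's argument instead gets exactly equal traces at the von Neumann level and reuses approximation lemmas it needs elsewhere, at the cost of an approximation step back to $C(X)$.

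One point to tighten. The bound ``each block gains at most roughly $\eta/k_i+\eta\theta$'' needs a balanced distribution, for instance sending each leftover piece to the block of $U_i$ that has received the least so far. Plain round-robin can put most of the leftover on a single block.

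More to the point, the spreading step is not needed at all. Only $K$ and $\min_{i,j}\tau(p_{i,j})$ enter the estimate, so you can simply merge the leftover of each $U_i$ into one of its blocks. Then $1< K\eta(1+\theta)+n\eta$ and $K\eta\leq 1$ give
\[
1-K\min_{i,j}\tau(p_{i,j})\leq 1-K\eta< K\eta\theta+n\eta\leq \theta+n\eta .
\]
This is at most $\varepsilon$ once $\theta\leq\varepsilon/2$ and $\eta\leq\min\{m,\varepsilon/(2n)\}$. The condition $K\geq N$ still follows by taking $\eta$ small, since $K>(1-n\eta)/(\eta(1+\theta))$.
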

\begin{proof}
    We prove an analog in $L^{\infty}(X,\mu_\tau)$ and transport it approximately to $C(X)$ using Lemma~\ref{lemma:approximating-multiple-projections}.
    Let $t_{i}:=\tau(p_{i})$ and find $s_{i}=a_{i}/b_{i}$ with $a_i,b_i\in \ensuremath{\mathbb{Z}} $ such that $0\leq t_{i}-s_{i}\leq\frac{\varepsilon}{2n}$.
    Then $\sum_{i=1}^n s_i\geq 1-\varepsilon/2$.
    Let $m:=Nb_{1}\cdots b_{n}\in \mathbb{Z}$. 
    
    For each $i$ and $1\leq j\leq n_{i}:=ms_{i}$, use diffuseness of $\tau$ to find pairwise orthogonal projections $\tilde{p}_{i,j}\leq p_{i}$ in $L^{\infty}(X,\mu_\tau)$ such that $\tau(\tilde{p}_{i,j})=1/m$. 
    Defining $K:=\sum_{i=1}^{n}n_{i}\geq N\sum_{i=1}^n a_i\geq N$, we then have
    \begin{equation}
    \label{eq:equalizing-projections-estimate}
        K\tau(\tilde{p}_{i,j})=\sum_{i=1}^n n_i/m=\sum_{i=1}^n s_i\geq1-\varepsilon/2.
    \end{equation}
    Using Lemma~\ref{lemma:approximating-multiple-projections}, find pairwise orthogonal projections $q_{i,j}$ in $C(X)$ such that 
    \begin{equation}
        \lVert q_{i,j}-\tilde{p}_{i,j}\rVert_{\tau}\leq \frac{\varepsilon}{2K}.
    \end{equation}
    For each $i,j$, the projection
    \begin{equation}
        p_{i,j}:=q_{i,j}p_{i}
    \end{equation}
    still satisfies $\norm{p_{i,j}-\tilde{p}_{i,j}}_\tau\leq \frac{\varepsilon}{2K}$ as $\tilde{p}_{i,j}\leq p_i$, so
    \begin{equation}
        \abs{\tau(p_{i,j})-\tau(\tilde{p}_{i,j})}\leq \norm{ p_{i,j}-\tilde{p}_{i,j}}_{\tau}\leq \frac{\varepsilon}{2K}
    \end{equation}
    by Kadison's inequality. Hence, \eqref{eq:equalizing-projections-estimate} gives
    \begin{equation}
        \begin{aligned}
            1-K\min_{i,j}\tau(p_{i,j})&\leq 1-K\min_{i,j}\tau(\widetilde p_{i,j}) +K\frac{\varepsilon}{2K}\\
            &\leq \frac{\varepsilon}{2}+\frac{\varepsilon}{2}=\varepsilon.
        \end{aligned}
    \end{equation}

    Replacing $p_{i,1}$ by $p_{i}-\sum_{j=2}^{n_{i}}p_{i,j}$, the value of $1-K\min_{i,j}\tau(p_{i,j})$ can only decrease and the $p_{i,j}$ now form a partition of unity, giving the desired refinement.
\end{proof}

\subsection{Obtaining single generation}
In this section, we prove our second main theorem. Its proof consists of three steps. The first and most involved step (Claim \ref{claim:inductive}) consists of an inductive argument giving a sequence of matrix algebras with respect to which the sequence of normalizing partial isometries has small area.
The inductive step is Lemma \ref{lemma:inductive-step-v5}; we include it as a separate lemma because it singles out the precise part for which we need strict/dynamical comparison, and the argument itself is quite technical. In the second step, we apply Shen's argument (Theorem \ref{thm:shen}) to obtain a sequence of projections which, together with the matrix algebras, recover the normalizing partial isometries. The final step (Claim \ref{claim:singlegen}) then puts everything together to obtain single generation.

\begin{lemma}
    \label{lemma:inductive-step-v5}
    Let $B$ be a unital, simple \ensuremath{C^*}-algebra with a unique trace $\tau$ with strict comparison, containing $C(X)$ unitally with $X$ the Cantor space, such that $\tau$ is diffuse on $C(X)$.
    Suppose that there exist a set $\{e^{(0)}_{ii}\}_{i=1}^{n_{0}}$ of pairwise orthogonal projections of equal trace in $B$ with $e^{(0)}_{11}\in C(X)$, a partition of unity $\{\widetilde e^{(0)}_i\}_{i=1}^{n_{0}}$ for $C(X)$ such that $e^{(0)}_{ii}\leq \widetilde e^{(0)}_{i}$ for all $i$, and $\varepsilon_{0}\geq 4/n_{0}$.
    Then for every $0<\varepsilon_1<1$ and partition of unity $\set{q_k}_{k=1}^N$ for $C(X)$, there is $n_1\in \mathbb{N}$, a copy of a matrix algebra $M_{n_1}$ in $B$ generated by matrix units $\{e^{(1)}_{ij}\}_{i,j=1}^{n_1}$ with unit $p_1 = \sum_i e^{(1)}_{ii}$, a partition of unity $\{\widetilde e^{(1)}_i\}_{i=1}^{n_{1}}$ for $C(X)$ refining $\{q_k\}_k$ and a partial isometry $w_1\in B$ such that
    \begin{enumerate}[label=(P\arabic*)]
    \item $e_{11}^{(1)}\in C(X)$ and $e_{11}^{(1)}\leq e_{11}^{(0)}$,
    \item $e_{ii}^{(1)}\perp e_{11}^{(0)}$ or $e_{ii}^{(1)}\leq e_{11}^{(0)}$ for each $i$,
    \item $e^{(1)}_{ii}\leq \widetilde e^{(1)}_{i}$ for all $i$,
    \item $w_{1}^{*}w_{1}=1-p_{1}$ and $w_1w_1^{*}\leq p_1$,
    \item $4/n_1 \leq \varepsilon_{1}$,
    \item $\tau(1-p_1)\leq \varepsilon_1$,
    \item $\tau(\mathcal{S}(w_{1}w_{1}^*\mid \{e_{ii}^{(1)}\}))\leq \varepsilon_{1}$,
    \item $\mathcal{I}(p_{0}ap_{0}\mid \{e_{ii}^{(0)}\})\leq \varepsilon_{0}$ for $a\in\{v_1, w_1\}$, with $p_0=\sum_{i=1}^{n_0}e_{ii}^{(0)}$,
    \end{enumerate}
    where $v_1 = \sum_{i=1}^{n_1-1} (e^{(1)}_{i,i+1}+ e^{(1)}_{i+1,i})$.
    If instead of $B$ having strict comparison, we suppose $C(X)$ has dynamical comparison relative to $B$ and $\{e^{(0)}_{ii}\}_{i=1}^{n_{0}}$ is a subset of $C(X)$, then the same properties hold and moreover $\{e_{ii}^{(1)}\}_{i=1}^{n_1}\subset C(X)$.
\end{lemma}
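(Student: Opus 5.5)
We build everything from $C(X)$-projections of nearly equal small trace and glue them with comparison partial isometries. First, using Lemma~\ref{lemma:equalizing-projections} (diffuseness of $\tau$ on $C(X)$), we refine the common refinement of $\{q_k\}_k$, $\{\widetilde e^{(0)}_i\}_i$ and $\{e^{(0)}_{11},1-e^{(0)}_{11}\}$ into a partition of unity $\{\widetilde e^{(1)}_i\}_{i=1}^{K}$ for $C(X)$. We take $K$ huge, with $4/K\leq\varepsilon_1$ and $K\gg n_0$, and $1-K\min_i\tau(\widetilde e^{(1)}_i)\leq\eta$ for a small $\eta$ fixed later. Set $t:=\min_i\tau(\widetilde e^{(1)}_i)$. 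We order the pieces so that the index-$1$ piece lies under $e^{(0)}_{11}$ and pieces under the same $\widetilde e^{(0)}_j$ are consecutive. Inside each $\widetilde e^{(1)}_i$ we pick a subprojection $e^{(1)}_{ii}$ of trace (nearly) exactly $t$: for $i=1$ take $e^{(1)}_{11}\in C(X)$ directly via diffuseness and Lemma~\ref{lemma:approximating-projections}, and for other $i$ cut $\widetilde e^{(1)}_i$ down similarly. For exact equality in the strict comparison case, take $e^{(1)}_{11}$ slightly smallest and cut the others in $B$ via comparison into equivalent copies. This gives (P1)--(P3). Strict comparison (unique trace) gives partial isometries $e^{(1)}_{i1}$ implementing $e^{(1)}_{11}\sim e^{(1)}_{ii}$, hence matrix units, $p_1=\sum_i e^{(1)}_{ii}$ and $n_1=K$. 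Then $\tau(1-p_1)\leq\eta\leq\varepsilon_1$, giving (P5) and (P6). In the dynamical setting we instead choose all $e^{(1)}_{ii}\in C(X)$ and take the implementing partial isometries in $N_B(C(X))$, which dynamical comparison provides whenever traces are strictly ordered. We therefore arrange a slight strict decrease of traces and use subequivalences into $C(X)$-subprojections, adjusting so the ranges are in $C(X)$ by normalizer property.

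Next, $w_1$. Since $\tau(1-p_1)\leq\eta$, comparison gives a partial isometry $w_1$ with $w_1^*w_1=1-p_1$ and range under $e^{(1)}_{11}+\dots+e^{(1)}_{mm}$, where $m\approx \eta K+2$ so that $mt>\tau(1-p_1)$. In fact we would use $\lceil\eta K\rceil+1$ consecutive diagonal projections, all chosen under a single $\widetilde e^{(0)}_{j}$ when possible, or spanning few such. Then $\tau(\mathcal{S}(w_1w_1^*\mid\{e^{(1)}_{ii}\}))\leq mt\lesssim 2\eta\leq\varepsilon_1$, which is (P7). This is possible because $\widetilde e^{(0)}_1\ge e^{(0)}_{11}$ carries about $K/n_0$ of the pieces.

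The main obstacle is (P8): areas are measured against the old diagonal $\{e^{(0)}_{ii}\}$. For $v_1$, note that $e^{(1)}_{i,i+1}$ lives in $\widetilde e^{(1)}_iB\widetilde e^{(1)}_{i+1}$. Compressing by $p_0$, the term $e^{(0)}_{jj}v_1e^{(0)}_{kk}$ vanishes unless the pair $(\widetilde e^{(0)}_j,\widetilde e^{(0)}_k)$ contains consecutive indices $i,i+1$. By the consecutive ordering this happens only for $j=k$ or for $j,k$ adjacent in the induced order of blocks. The number of such pairs is at most $3n_0$, each contributing $n_0^{-2}$ since $e^{(0)}_{jj}$ have equal trace at most $1/n_0$. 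This gives area $\leq 3/n_0<\varepsilon_0$ when $\varepsilon_0\geq4/n_0$.

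For $w_1$, the range lies under one (or two adjacent) blocks $\widetilde e^{(0)}_j$, so $p_0w_1p_0=e^{(0)}_{jj}w_1p_0$ has nonzero pieces only in rows $j$. We bound the area using Lemma~\ref{lemma:areaprops}(\ref{lemma:areaprops_onesided}) by $\tau(\mathcal{S}(\text{range}))\cdot 1\leq 2/n_0$. Combined with the diagonal entries, this gives area $\leq 3/n_0\leq\varepsilon_0$. The delicate bookkeeping is matching the block ordering with the ordering used in $v_1$ and keeping $w_1$'s range inside few $\widetilde e^{(0)}_j$. I would check the counting constant carefully there.
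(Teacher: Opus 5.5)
Your proposal is correct and follows essentially the paper's route. You refine the common partition via Lemma~\ref{lemma:equalizing-projections} in block-consecutive order, choose $e^{(1)}_{11}\in C(X)$ under $e^{(0)}_{11}$ with trace just below the minimum, and set $e^{(1)}_{ii}=f_if_i^*$ using strict comparison (or dynamical comparison, where the normalizer property puts the ranges in $C(X)$). You then count same or adjacent blocks for $v_1$, and send $1-p_1$ into a few leading diagonal projections lying under one block to get (P4), (P7) and (P8). The differences are cosmetic: you place the range of $w_1$ under $\widetilde e^{(0)}_1$ rather than $e^{(0)}_{11}$, and you should fix $\eta$ small relative to $\tau(e^{(0)}_{11})$ rather than appealing to ``about $K/n_0$ pieces''. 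That choice of $\eta$ actually avoids the paper's tacit use of $\tau(e^{(0)}_{11})\geq 2\varepsilon_1$, which is not among the lemma's hypotheses.
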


Before continuing with the proof, we first make a couple of remarks clarifying the data constructed in the lemma. The main output is the copy of a matrix algebra $M_{n_1} = \Span\{ e^{(1)}_{ij}\}_{i,j=1}^{n_1}$ for which the diagonal matrix elements $e^{(1)}_{ii}$ behave nicely (conditions (P1) and (P2)) with respect to the input element $e^{(0)}_{11}$ and refine a given partition of unity $\{q_k\}_k$ up to a defect projection $1-p_1$ (as a consequence of condition (P3)). This defect projection has small trace (conditon (P6)) and is Murray-von Neumann subequivalent to $p_1$ (condition (P4)). The partial isometry $w_1$ implementing this subequivalence and the sum $v_1 = \sum_{i=1}^{n_1-1} (e^{(1)}_{i,i+1}+ e^{(1)}_{i+1,i})$ of off-diagonal elements have small area with respect to the input set of projections $\{e^{(0)}_{ii}\}_{i=1}^{n_0}$ (condition (P8)). Finally, we have a partition of unity $\{\widetilde e^{(1)}_i\}_{i=1}^{n_{1}}$ for $C(X)$ such that each $e^{(1)}_{ii}$ is below $\widetilde e^{(1)}_i$ (condition (P3)). This partition is not needed in the remainder of the proof, only to construct the next set of output data in the sequence. The input partition of unity $\{q_k\}$ will be one with respect to which a given generator has small area.

\begin{proof}[Proof of Lemma \ref{lemma:inductive-step-v5}]
Let $n_0$, $\varepsilon_0$, $\{e^{(0)}_{ii}\}_{i=1}^{n_0}$, $\{\widetilde e^{(0)}_{i}\}_{i=1}^{n_0}$ be as in the statement.
Fix $\varepsilon_1>0$ and a partition $\set{q_k}_{k=1}^N$ of the unit of $B$.

Note that the set of projections $\{e^{(0)}_{11}q_{j}\}_{j} \cup \{(1-e^{(0)}_{11})\widetilde e^{(0)}_iq_{j}\}_{i,j}$ is a partition of unity for $C(X)$.
Lemma \ref{lemma:equalizing-projections} provides $n_1\geq 4/\varepsilon_1$ -- giving condition (P5) -- and a partition of unity $\widetilde e^{(1)}_{1},\dots,\widetilde e^{(1)}_{n_1}\in C(X)$ for $C(X)$ refining $\{e_{11}^{(0)}q_{j}\}_{j}\cup\{(1-e_{11}^{(0)})\widetilde e^{(0)}_iq_{j}\}_{i,j}$ such that
\begin{equation} \label{eq:fihavesimilartrace}
    1-n_1\min_{i}\tau(\widetilde e^{(1)}_{i})\leq \frac{\varepsilon_1}{4}.
\end{equation}
By reindexing if necessary, we may assume that $(\widetilde e^{(1)}_i)_i$ is subordinate to $(\widetilde e^{(0)}_{i})_i$ and $\widetilde e^{(1)}_{1} \leq e^{(0)}_{11}$.
Using 0-dimensionality of $X$, find a projection $e^{(1)}_{11}\in C(X)$ such that 
\begin{equation} \label{eq:f11traceclosetofi}
    \min_{i}\tau(\widetilde e^{(1)}_{i})-\frac{\varepsilon_1}{4n_1}\leq \tau(e^{(1)}_{11})<\min_{i}\tau(\widetilde e^{(1)}_{i})
\end{equation}
and $e^{(1)}_{11}\leq \widetilde e^{(1)}_{1}$; this ensures (P1).
Applying strict comparison or dynamical comparison gives for each $1\leq i\leq n_1$ partial isometries $f_{i}\in B$ for such that $f_{i}^{*}f_{i}=e^{(1)}_{11}\in C(X)$ and $f_{i}f_{i}^{*}\leq \widetilde e^{(1)}_{i}$.
In the case of dynamical comparison, we furthermore have $f_{i}f_{i}^{*}\in C(X)$ for all $i$.
For each $i$ and $j$, write $e^{(1)}_{ij} := f_{i}f_{j}^{*}$. 
Then $\Span(e^{(1)}_{ij})$ is a copy of $M_{n_1}$ with unit $p_1=\sum e^{(1)}_{ii}$ such that each $e^{(1)}_{ii}$ is under $\widetilde e^{(1)}_i$, so (P3) holds. As the $\widetilde e^{(1)}_i$ are a refinement of $\{e^{(0)}_{11}, 1-e^{(0)}_{11}\}$, we have also arranged (P2).

To prove that condition (P8) for $a=v_1$ is satisfied, observe the following. Since $e^{(1)}_{ii}\leq \widetilde e^{(1)}_i$ and $(\widetilde e^{(1)}_i)_i$ is subordinate to $(\widetilde e^{(0)}_i)_i$, for each $1\leq k\leq n_1$ there is a unique $1\leq i_k\leq n_{0}$ such that $e^{(1)}_{kk}\leq \widetilde e^{(0)}_{i_k}$, and $i_{k+1} \in \{i_k, i_k+1\}$. Consequently, if for arbitrary $i$, $j$ and $k$ we have
\begin{equation}
    0 \neq e^{(0)}_{ii} e^{(1)}_{k,k+1} e^{(0)}_{jj} = e^{(0)}_{ii} \widetilde e^{(0)}_{i_k} e^{(1)}_{k,k+1} \widetilde e^{(0)}_{i_{k+1}} e^{(0)}_{jj},
\end{equation}
then also $0\neq e^{(0)}_{ii} \widetilde e^{(0)}_{i_k} \leq \widetilde e^{(0)}_{i} \widetilde e^{(0)}_{i_k}$ and $0 \neq \widetilde e^{(0)}_{i_{k+1}}e^{(0)}_{jj} \leq \widetilde e^{(0)}_{i_{k+1}}\widetilde e^{(0)}_{j}$. 
This is only possible for $i=i_k$ and $j=i_{k+1}\in \{i_k, i_k+1\}$, so $j=i$ or $j=i+1$. 
Therefore, the element $v_{1}=\sum_{i=1}^{n_{1}-1}e_{i,i+1}^{(1)}+e_{i+1,i}^{(1)}$ satisfies 
\begin{equation}
        \mathcal{I}(p_{0}v_{1}p_{0}\mid \{e_{ii}^{(0)}\})\leq 4n_{0}\tau(e_{11}^{(0)})^{2}.
\end{equation}
Since $\{e_{ii}^{(0)}\}_{i=1}^{n_0}$ are pairwise orthogonal and of equal trace, we have $\tau(e^{(0)}_{ii}) = \tau(e_{11}^{(0)})\leq 1/n_{0}$ for all $i$, so we can further bound the above expression as
\begin{equation}
        \mathcal{I}(p_{0}v_{1}p_{0}\mid \{e_{ii}^{(0)}\})\leq \frac{4}{n_{0}}\leq \varepsilon_{0},
\end{equation}
proving that (P8) holds.
        
To show (P6), we compute
\begin{equation}
\label{eq:trace-estimate-1}
\begin{aligned}
    \tau(1-p_1)&=1-\sum_{i=1}^{n_1}\tau(e^{(1)}_{ii})\\
    &=1 -n_1\tau(\widetilde e^{(1)}_{1}) + n_1(\tau(\widetilde e^{(1)}_{1})-\tau(e^{(1)}_{11}))\\
    &\leq \frac{\varepsilon_1}{4}+ n_1\frac{\varepsilon_1}{4n_1}=\frac{\varepsilon_1}{2}\,
\end{aligned}
\end{equation}
using \eqref{eq:fihavesimilartrace} and \eqref{eq:f11traceclosetofi}.
    
We will now construct the partial isometry $w_1$ such that conditions (P4), (P7), and (P8) for $a=w_1$ hold. 
By combining \eqref{eq:fihavesimilartrace} and \eqref{eq:f11traceclosetofi}, we find
\begin{equation}
    \frac{\varepsilon_1}{2n_1} < \frac{1}{n_1}\left(1-\frac{\varepsilon_1}{2}\right) \leq \tau(e^{(1)}_{11}) \leq \frac{1}{n_1}\leq \frac{\varepsilon_1}{2},
\end{equation}
so there exists an integer $m\leq n_1$ such that $\varepsilon_1/2 <m\tau(e^{(1)}_{11})\leq\varepsilon_1$. Denoting by $M$ the number of $e^{(1)}_{ii}$ which are under $e^{(0)}_{11}$, we now claim that $M\geq m$. To see this, recall that we have already shown condition (P2), so as $\{e^{(1)}_{ii}\}_i\cup\{1-p_1\}$ partitions $B$, we can observe that
\begin{equation}
    M\tau(e^{(1)}_{11}) + \tau(1-p_1) \geq \tau(e^{(0)}_{11}) \geq 2\varepsilon_1.
\end{equation}
Hence, we find
\begin{equation}
    M\tau(e^{(1)}_{11}) \geq \frac{3}{2}\varepsilon_1 \geq m \tau(e^{(1)}_{11})
\end{equation}
which proves our claim. 

Recalling that $(\widetilde e^{(1)}_i)_i$ is subordinate to $(\widetilde e^{(0)}_i)_i$, it follows that $e^{(1)}_{ii}\leq e^{(0)}_{11}$ for all $1\leq i\leq M$, so 
\begin{equation}
\label{eq:trace-estimate-2}
    \tau\left(\sum_{i=1}^M e^{(1)}_{ii} \right) =M\tau(e^{(1)}_{11}) \geq m\tau(e^{(1)}_{11}) >\frac{\varepsilon_1}{2}.
\end{equation}
When working with strict comparison, combining \eqref{eq:trace-estimate-1} and \eqref{eq:trace-estimate-2} immediately gives a $w_1\in B$ satisfying (P4).
In the setting of dynamical comparison, note that $1-p_1$ and $\sum_{j=1}^M e_{ii}^{(1)}$ are both in $C(X)$, so we can use the same trace estimates to obtain a $w_1 \in B$ satisfying (P4).

Moreover, we have
\begin{equation}
\begin{aligned}
    \tau(\mathcal{S}(w_1w_1^*\mid \{e^{(1)}_{ii}\}))
        &\leq \tau\left(\mathcal{S}\left(\sum_{i=1}^m e^{(1)}_{ii}\mid \{e^{(1)}_{ii}\}\right)\right) \\
        &= m \tau(e^{(1)}_{11}) \leq \varepsilon_1,
\end{aligned}
\end{equation}
so condition (P7) holds. Finally, we also find that 
\begin{equation}
    e^{(0)}_{jj}p_{0}w_1p_{0}=e^{(0)}_{jj}w_1p_{0}\neq0 \implies j=1
\end{equation}
for all $j$, so 
\begin{equation}
    \mathcal{I}(p_{0}w_1p_{0}\mid \{e^{(0)}_{ii}\})\leq \frac{1}{n_{0}} \leq \varepsilon_{0},
\end{equation}
which settles (P8) for $a=w_1$, finishing the proof.
\end{proof}

\begin{thm}
    \label{thm:singlegeneration}
    Let $C(X) \subseteq B \subseteq A$ be unital inclusions of unital $C^*$-algebras, with $A$ separable, $B$ simple and with a unique trace $\tau$, which is diffuse on $C(X)$, and $X$ the Cantor space.
    Suppose $B$ has strict comparison or $C(X)$ has dynamical comparison relative to $B$.
    If there is a countable set $\set{x_n}_n\subset A$ of elements generating $A$ which have small area w.r.t.\ $(C(X),\tau)$, then $A$ is singly generated.
\end{thm}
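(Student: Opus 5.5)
The plan follows the three-step outline announced at the start of this subsection.

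\emph{Step 1: the inductive construction.} Enumerate the generators $\{x_n\}_{n\geq1}$ and fix tolerances $\varepsilon_n\to0$, say $\varepsilon_n=2^{-2n}/16$. To start the induction, use Lemma~\ref{lemma:equalizing-projections} and comparison to produce $\{e^{(0)}_{ii}\}$, $\{\widetilde e^{(0)}_i\}$ and $n_0\geq 4/\varepsilon_0$. Then apply Lemma~\ref{lemma:inductive-step-v5} repeatedly. At stage $n$ the input partition $\{q_k\}$ is a partition of unity for $C(X)$ with $\mathcal{I}(x_n\mid\{q_k\})<\varepsilon_n$, which exists by the small-area hypothesis. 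This yields:
\begin{itemize}
\item matrix units $\{e^{(n)}_{ij}\}$ with units $p_n$;
\item partial isometries $w_n$ with $w_n^*w_n=1-p_n$;
\item the shift elements $v_n$ satisfying (P1)--(P8).
\end{itemize}
Because $\{e^{(n)}_{ii}\}$ is subordinate to a refinement of $\{q_k\}$, Lemma~\ref{lemma:areaprops}.\ref{lemma:areaprops_refine} gives $\mathcal{I}(p_nx_np_n\mid\{e^{(n)}_{ii}\})\lesssim\varepsilon_n$. The partial isometry $w_n$ lets us unitalize the matrix algebra. The $n_n\times n_n$ matrix units on $p_n$, together with $w_n$, sit inside a unital copy of $M_{n_n}$ (or a corner one can adjoin) whose diagonal has small area. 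The cross terms $x_n(1-p_n)$ and $(1-p_n)x_n$ are encoded through $w_n$. Lemma~\ref{lemma:areaprops}.\ref{lemma:areaprops_onesided} with (P6)--(P7) bounds their area by $\tau(\mathcal{S}(w_nw_n^*))\lesssim\varepsilon_n$.

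\emph{Step 2: Shen's theorem.} For each $n$, apply Theorem~\ref{thm:shen} to the elements
\[
\{x_n \text{ compressed/encoded via } w_n,\ w_n,\ v_{n+1}\}
\]
relative to the matrix units $\{e^{(n)}_{ij}\}$. Condition (P8) at the next stage controls the area of $v_{n+1}$ and $w_{n+1}$ relative to $\{e^{(n)}_{ii}\}$. This produces projections $Q_n$ such that
\[
C^*(Q_n,\{e^{(n)}_{ij}\})\ni x_n,w_n,v_{n+1},w_{n+1},
\]
with $\tau(\mathcal{S}(Q_n\mid\{e^{(n)}_{ii}\}))\leq 2\sqrt{\varepsilon_n}+2/n_n$, which is summable. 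Since $v_{n+1}$ and $e^{(n+1)}_{11}\in C(X)$ generate all matrix units $e^{(n+1)}_{ij}$, and $e^{(n+1)}_{11}\leq e^{(n)}_{11}$ by (P1), the algebras are chained. Hence everything is generated by
\[
\{e^{(0)}_{ij}\}\cup\{Q_n\}\cup\{e^{(n)}_{11}\}_n .
\]

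\emph{Step 3: single generation (the main obstacle).} We must compress the countably many $Q_n$ into finitely many elements. The idea is to use the small supports to move each $Q_n$ into the corner $e^{(0)}_{11}$ (or a fixed small corner) using partial isometries built from matrix units. These translated copies should be mutually commuting projections, arranged by placing them under pairwise orthogonal diagonal projections supplied by (P2) and the decreasing chain (P1). Lemma~\ref{lemma:single-generation-with-projections} then collapses the commuting projections, together with the commuting family $\{e^{(n)}_{11}\}\subset C(X)$, into a single self-adjoint element $h$. Combining $h$ with a self-adjoint generator of the initial matrix algebra $M_{n_0}$ yields two self-adjoint elements, hence a single generator. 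The hardest part is ensuring the relocated supports fit disjointly and that recovering $Q_n$ uses only already-generated matrix units, avoiding circularity. I would first attempt this via a Cantor-type nested choice $e^{(n)}_{11}\leq e^{(n-1)}_{11}$, with $Q_n$ conjugated into $e^{(n-1)}_{11}-e^{(n)}_{11}$-type slots.
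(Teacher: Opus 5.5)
\textbf{Verdict: genuine gap.} Your architecture is the paper's: Lemma~\ref{lemma:inductive-step-v5} level by level, Theorem~\ref{thm:shen} in each corner, then Lemma~\ref{lemma:single-generation-with-projections}. The placement you float at the very end, moving $Q_n$ under $e^{(n-1)}_{11}-e^{(n)}_{11}$, is also exactly the paper's. But Step~3 is where the proof actually happens, you leave it open, and your Step~1 makes it impossible.

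\textbf{The tolerance schedule.} You fix $\varepsilon_n=2^{-2n}/16$ in advance. The slot has trace at most $\tau(e^{(n-1)}_{11})\le 1/n_{n-1}\le\varepsilon_{n-1}/4$, which is of order $4^{-n}$. The only control on the support of $Q_n$ is Shen's bound $2\sqrt{12\varepsilon_n}+2/n_n$, which is of order $2^{-n}$, so the support need not fit; summability is irrelevant here. More fundamentally, $n_{n-1}$ is forced up by how fine the partition $\{q_k\}$ for $x_{n-1}$ must be. So $\tau(e^{(n-1)}_{11})$ has no lower bound in terms of $\varepsilon_{n-1}$, and no schedule fixed beforehand can work.

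The missing idea is to choose $\varepsilon_n$ \emph{after} level $n-1$ is built, small enough relative to $\tau(e^{(n-1)}_{11})$ that $\tau(\mathcal{S}(Q_n\mid\{e^{(n)}_{ii}\}))\le\tau(e^{(n-1)}_{11})/3$. This is what the paper does: $\varepsilon_l$ is fixed at the start of the $l$-th inductive step in terms of $\tau(e^{(l-1)}_{11})$. Given the $\sqrt{12\varepsilon_n}$ in Shen's bound, something of order $\tau(e^{(n-1)}_{11})^2$ is what the arithmetic requires.

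With that choice, (P1), (P2), (P5), (P6) give at least as many $e^{(n)}_{ii}\le e^{(n-1)}_{11}-e^{(n)}_{11}$ as there are diagonal projections in $\mathcal{S}(Q_n\mid\{e^{(n)}_{ii}\})$. Replacing $Q_n$ by $UQ_nU^*$ for a permutation unitary $U\in M_{n_n}$ then gives $Q_n\le e^{(n-1)}_{11}-e^{(n)}_{11}$ without changing $C^*(Q_n,M_{n_n})$. This also settles your circularity worry: $U\in M_{n_n}=C^*(v_n,e^{(n)}_{11})$, and $v_n$ was already recovered at level $n-1$. Since these slots are pairwise orthogonal and nested, the $Q_n$, the $e^{(m)}_{11}$ and $1-p_1$ all commute.

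\textbf{Shen's theorem lives in the corner.} Theorem~\ref{thm:shen} needs a unital matrix algebra, so it must be applied in $p_nAp_n$ to $p_nap_n$, $w_nap_n$, $p_naw_n^*$, $w_naw_n^*$. Hence $C^*(Q_n,\{e^{(n)}_{ij}\})\subseteq p_nAp_n$ contains neither $x_n$ nor $w_n\in p_nA(1-p_n)$. There is also no unital copy of $M_{n_n}$ to ``adjoin'', since $1-p_n$ is only subequivalent to a few $e^{(n)}_{ii}$. One recovers $x_n$ from the four compressions via $1-p_n=w_n^*w_n$ once $w_n$ is already available. That is why the paper feeds $x_n$, $v_{n+1}$ \emph{and} $w_{n+1}$ (not $w_n$) into Shen's theorem at level $n$, with (P8) controlling the areas of the last two.

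\textbf{Starting the chain.} The chain has to be started by an honest partial isometry. The paper takes a trivial level $n_0=1$, $e^{(0)}_{11}=1$, $\varepsilon_0=4$, with $x_1=0$. It then uses the self-adjoint element $y=w_1+w_1^*+\sum_i(e^{(1)}_{i,i+1}+e^{(1)}_{i+1,i})$ together with a generator of $C^*(S)$, where $S=\{e^{(l)}_{11},Q_l\}_l\cup\{w_1^*w_1\}$; the element $w_1^*w_1\in S$ extracts $w_1$ from $y$. In your final step, all proposed generators (the generator of $M_{n_0}$, the $Q_n$, the $e^{(n)}_{11}$) lie in $p_0Ap_0$. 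So when $p_0\neq 1$, which comparison does not prevent, they cannot generate $A$.
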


\begin{proof}
Without loss of generality, assume $x_1 = 0$. We first prove the following claim:
\begin{claim} \label{claim:inductive}
    For each $l\in \mathbb{N}$, there exist a matrix algebra $M_{n_l} = \Span\{e^{(l)}_{ij}\}_{i,j=1}^{n_l}$ in $B$ with unit $p_l = \sum_{i=1}^{n_l} e^{(l)}_{ii}$, a partition of unity $\{\widetilde e^{(l)}_{i}\}_{i=1}^{n_{l}}$ and a partial isometry $w_l \in B$ such that all conditions in Lemma \ref{lemma:inductive-step-v5} hold (replacing the index $0$ by $l-1$ and $1$ by $l$), as well as
    \begin{enumerate}
        \item[(P9)] $\mathcal{I}(p_{l}x_lp_{l}\mid \{e_{ii}^{(l)}\})\leq \varepsilon_{l}$.
    \end{enumerate}
    If $C(X)$ has dynamical comparison relative to $B$, then moreover all $e^{(l)}_{ii}$ ($l\in \mathbb{N}$, $1\leq i \leq n_l$) are in $C(X)$.
\end{claim}
\begin{proof}[Proof of claim]
We run an inductive argument. For the base case ($l=1$), it suffices to define the input data for Lemma \ref{lemma:inductive-step-v5} to start the inductive process. Letting $n_0 = 1$, $e^{(0)}_{11} = \widetilde e_1^{(0)}=1 \in C(X)$ and $\varepsilon_0 = 4$ does the trick, recalling that $x_1=0$.

Now for the inductive step, suppose we are given the data as in the claim for $l-1$. Fix $\varepsilon_l\leq \tau(e^{(l-1)}_{11})/2000$. By assumption, there exists a partition $\{q_k\}_{k=1}^N$ of the unit of $C(X)$ such that
\begin{equation}
\label{eq:xlsmallarea}
    \mathcal{I}(x_l \mid \{q_k\}_k) \leq \varepsilon_l.
\end{equation}
Applying Lemma~\ref{lemma:inductive-step-v5} in $B$ for $\varepsilon_l$ and $\{q_k\}_k$, there exists a matrix algebra $M_{n_l} = \Span\{e^{(l)}_{ij}\}_{i,j=1}^{n_l}$ in $A$ with unit $p_l = \sum_{i=1}^{n_l} e^{(l)}_{ii}$, and a partial isometry $w_l \in A$ satisfying conditions (P1)-(P8). Moreover, note that $\{e_{ii}^{(l)}\}_i \cup\{(1-p_l)q_k\}_k$ is a refinement of $\{q_k\}_k$, by (P3) and the fact that $\{\widetilde e_{i}^{(l)}\}_i$ refines $\{q_k\}_k$. Combining \eqref{eq:xlsmallarea} and Lemma \ref{lemma:areaprops}.\ref{lemma:areaprops_refine}, it then follows that (P9) also holds. As such, all conditions are satisfied, finishing the inductive step. This proves the claim.
\end{proof}

Fix $l\in \mathbb N$, and consider the corner $p_{l}Ap_{l}$ of $A$, in which the copy of $M_{n_{l}}$ is now unital. We wish to apply Theorem~\ref{thm:shen} to the elements $p_{l}ap_{l}$, $w_{l}ap_{l}$, $p_{l}aw_{l}^{*}$, and $w_{l}aw_{l}^{*}$ for $a\in\{x_{l}, v_{l+1}, w_{l+1}\}$. For all $a\in \{x_l,v_{l+1},w_{l+1}\}$, it follows from (P8) and (P9) that 
\begin{equation}
    \mathcal{I}(p_{l}ap_{l})\mid \{e_{ii}^{(l)}\})\leq \varepsilon_l,
\end{equation}
and using (P7) and Lemma~\ref{lemma:areaprops}.\ref{lemma:areaprops_onesided}, we see that 
\begin{equation}
\begin{aligned}
    \mathcal{I}(w_{l}ap_{l}\mid \{e_{ii}^{(l)}\})&\leq \varepsilon_{l},\\
    \mathcal{I}(p_{l}aw_{l}^{*}\mid \{e_{ii}^{(l)}\})&\leq \varepsilon_{l},\\
    \mathcal{I}(w_{l}aw_{l}^{*}\mid \{e_{ii}^{(l)}\})&\leq \varepsilon_{l}\, .
\end{aligned}
\end{equation}
Theorem~\ref{thm:shen} then applies with 
\begin{equation}
    c\leq \sqrt{12\varepsilon_l}\leq \frac{\tau(e_{11}^{(l-1)})}{12}
\end{equation}
and produces a projection $r_{l}\in p_{l}Ap_{l}$ such that 
\begin{equation}\label{eq:shenprojection}
    C^{*}(r_{l},M_{n_{l}})=C^{*}(p_{l}ap_{l},w_{l}ap_{l},p_{l}aw_{l}^{*},w_{l}aw_{l}^{*},M_{n_{l}}\mid a\in\{x_l, v_{l+1}, w_{l+1}\})
\end{equation}
and 
\begin{equation} \label{eq:support-of-r_lv4}
    \tau(\mathcal{S}(r_{l}\mid \{e_{ii}^{(l)}\}))\leq 2c+2/n_l \leq\frac{\tau(e_{11}^{(l-1)})}{3},
\end{equation}
since $2/n_l\leq \varepsilon_l/2\leq \frac{\tau(e_{11}^{(l-1)})}{4000}$ by (P5).
Let $N=\tau(\mathcal{S}(r_{l}\mid \{e_{ii}^{(l)}\}))/\tau(e_{11}^{(l)})$ be the number of projections $\{e_{ii}^{(l)}\}$ that partition $\mathcal{S}(r_{l}\mid \{e_{ii}^{(l)}\})$.
We claim that there are at least $N$ projections $e_{ii}^{(l)}$ such that $e_{ii}^{(l)}\leq e_{11}^{(l-1)}-e_{11}^{(l)}$. To see this, recall from (P2) that for each $1\leq i\leq n_l$ we either have $e_{ii}^{(l)}\perp e_{11}^{(l-1)}$ or $e_{ii}^{(l)}\leq e_{11}^{(l-1)}$.
Since the projections $\{e_{ii}^{(l)}\}_{i=1}^{n_l}\cup\{1-p_l\}$ partition the unit of $A$, it suffices to show that
\begin{equation}
    \tau(e_{11}^{(l-1)}-e_{11}^{(l)})-\tau(1-p_l)\geq N\tau(e_{11}^{(l)}).
\end{equation}
But this is immediate from (\ref{eq:support-of-r_lv4}), (P1) and (P5). 
By permuting the matrix units $e^{(l)}_{ij}$ appropriately, we may therefore assume that $\mathcal{S}(r_{l}\mid \{e_{ii}^{(l)}\})\leq e_{11}^{(l-1)}-e_{11}^{(l)}$. As $r_l \in p_l A p_l$, we have 
\begin{equation}
    r_l\leq \mathcal{S}(r_l\mid \{e_{ii}^{(l)}\})\leq e_{11}^{(l-1)}-e_{11}^{(l)},
\end{equation}
so $r_l$ commutes with $e_{11}^{(j)}$ for all $j\in\mathbb{N}$.
Note also that $r_l$ and $e_{11}^{(l)}$ commute with $w_1^*w_1=1-p_1$.
Indeed, by construction, $r_l\leq e_{11}^{(l-1)}\leq \dots \leq e_{11}^{(1)}\leq p_1$ for $l\geq 2$ and also $r_1\leq p_1$ by definition.

Hence, we obtain a sequence of projections $r_l \in p_lAp_l$ for all $l\in\mathbb N$, such that the set
\begin{equation}
    S = \{ e^{(l)}_{11}, r_l\}_{l\in\mathbb N} \cup \{w_1^*w_1\}
\end{equation}
consists of pairwise commuting projections. Let 
\begin{equation}
    y=w_{1}+w_{1}^{*}+\sum_{i=1}^{n_{1}-1}e_{i,i+1}^{(1)}+(e_{i,i+1}^{(1)})^* \in A
\end{equation}
and note that $y$ is self-adjoint.

\begin{claim} \label{claim:singlegen}
    $A=\ensuremath{C^*}(y, S)$.
\end{claim}
\begin{proof}[Proof of claim]
    The proof is a slight modification of \cite[Theorem~4.2]{Shen}.
    First notice that since $w_1^*w_1\in  S$,
    \begin{equation}
        w_1^*=w_1^*w_1 y\in \ensuremath{C^*}(y,S)
    \end{equation}
    and so 
    \begin{equation}
        \sum_{i=1}^{n_{1}-1}e_{i,i+1}^{(1)}+(e_{i,i+1}^{(1)})^* = y - w_1 - w_1^* \in \ensuremath{C^*}(y,S)
    \end{equation}
    as well.
    Since $e_{11}^{(1)}\in S$, it follows by a standard argument (see also the proof of \cite[Theorem~3.4]{Shen}) that 
    $M_{n_1}\subseteq \ensuremath{C^*}(y,S)$.
    Hence $\ensuremath{C^*}(r_1,w_1,M_{n_1})\subseteq \ensuremath{C^*}(y,S)$ and so 
    \begin{equation}
        x_1,v_2,w_2\in \ensuremath{C^*}(y,S)
    \end{equation}
    by (P4) and \eqref{eq:shenprojection}.

    Since $e_{11}^{(i)}\in S$, we can proceed inductively, repeatedly using (P4) and \eqref{eq:shenprojection}, to get $x_l\in \ensuremath{C^{*}}(y,S)$.
    Since $A=\ensuremath{C^{*}} (x_l\mid l\in \ensuremath{\mathbb{N}})$, this finishes the proof.
\end{proof}

Finally, Lemma~\ref{lemma:single-generation-with-projections} shows that $\ensuremath{C^*}(S)=\ensuremath{C^*}(z)$ for some self-adjoint $z\in \ensuremath{C^*}(S)\subseteq A$.
Hence, $A$ is generated by two self-adjoint elements $y$ and $z$ and is therefore generated by the element $y+iz$.
\end{proof}

\subsection{Arranging small area for normalizers}
In order to obtain concrete classes of $C^*$-algebras to apply Theorem~\ref{thm:singlegeneration} to, we will show in this section that any unital, separable $C^*$-algebra $A$ containing $C(X)$ with $X$ the Cantor space as a unital, regular $C^*$-subalgebra, has a countable set of generators with small area, provided $A$ admits a faithful trace $\tau$ which is diffuse on $C(X)$ (Corollary~\ref{cor:smallgenerators}).

Our strategy consists of showing that any partial isometry $v\in N_A(C(X))$ has small area w.r.t.\ $(C(X),\tau)$ (Proposition~\ref{prop:cnpi_smallarea}), which suffices as a consequence of Lemma~\ref{lemma:clopen-normalizers}.
To do so, we pass to the tracial completion $\mathcal{M} := \pi_\tau(A)''$ of $A$, which is a $\mathrm{II}_1$ factor, and extend $v$ to a unitary $u$ in $\mathcal{M}$ (see Lemma~\ref{lemma:npi_extends}). The weak closure of $\pi_\tau(C(X))$ is identified with $L^{\infty}(X, \mu_{\tau})$, so that $\pi_{\tau}|_{C(X)}$ is the natural embedding of $C(X)$ into $L^{\infty}(X,\mu_\tau)$.
Since $\tau$ is diffuse, the induced measure is as well, which allows us to subdivide projections in $L^\infty(X,\mu_\tau)$ into arbitrarily many pieces.
Using the Rokhlin lemma in the form of Lemma~\ref{lemma:rokhlin-lemma} then produces a partition of $1_{\mathcal{M}}$ into orthogonal projections approximately compatible with the conjugation action of $u$.
We then lift the partition approximately to $C(X)$ and maintain approximate compatibility with conjugation by $v$ by using Lemma~\ref{lemma:approximating-multiple-projections}.
Finally, we verify that the area of $v$ is small with respect to these projections.

The first lemma is a straightforward consequence of the Rokhlin lemma. We provide details for convenience.

\begin{lemma}
\label{lemma:rokhlin-lemma}
    Let $(X,\mu)$ be a standard Borel probability space and $T\in \Aut(X)$ an invertible measure preserving transformation.
    Let $\varepsilon>0$.
    Then there is $n\in \mathbb{N}$ and measurable sets $E_1,\dots E_n$ such that $E_k,T(E_k),\dots T^{k-1}(E_k)$ are disjoint for $1 \leq k \leq n$.
    Moreover, setting 
    \begin{equation}
        E_\varepsilon=X\setminus \bigcup_{\substack{1\leq i\leq n\\0\leq j\leq i-1}}T^j(E_i)
    \end{equation}
    we have $\mu(E_\varepsilon)<\varepsilon$  and
    $T^k(E_k)\subseteq E_k\cup E_\varepsilon$ for each $1\leq k\leq n$.
\end{lemma}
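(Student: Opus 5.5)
The plan is to split $X$ into its periodic and aperiodic parts and treat them separately. On the periodic part the towers will close up exactly. On the aperiodic part the tops of the towers will be forced to map into a set of small measure, which we then place inside $E_\varepsilon$.

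\emph{Step 1 (decomposition).} For $k\in\mathbb{N}$, let $P_k$ be the set of points of minimal period exactly $k$, and let $P_\infty$ be the set of aperiodic points. These sets are measurable, $T$-invariant, and partition $X$. Choose $n$ so large that $\mu\bigl(\bigcup_{k>n}P_k\bigr)<\varepsilon/4$. For each $k\le n$, use a Borel transversal (e.g.\ the minimal point of each finite orbit for a fixed Borel linear order on the standard Borel space $X$) to obtain a measurable fundamental domain $F_k\subseteq P_k$. Then $F_k,T(F_k),\dots,T^{k-1}(F_k)$ partition $P_k$, and $T^k(F_k)=F_k$.

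\emph{Step 2 (aperiodic part via a Kakutani skyscraper).} By the classical Rokhlin lemma applied to $T|_{P_\infty}$, pick a large $m$ and a base $F\subseteq P_\infty$ with $F,\dots,T^{m-1}(F)$ disjoint and covering all of $P_\infty$ up to measure $\varepsilon/4$. In particular $\mu(F)\le 1/m<\varepsilon/4$. The set $R=\bigcup_{j\ge0}T^j(F)$ then has $\mu(P_\infty\setminus R)<\varepsilon/4$. By Poincaré recurrence, the first return time $r\colon F\to\mathbb{N}$ is defined a.e. Write $F^{(j)}=\{x\in F: r(x)=j\}$. The columns $T^i(F^{(j)})$, $0\le i<j$, are pairwise disjoint and cover $R$. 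Enlarging $n$ if needed, we may also assume that the columns of height $>n+1$ have total measure $<\varepsilon/4$.

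\emph{Step 3 (definition of the $E_k$).} For $1\le k\le n$, set
\begin{equation*}
    E_k := F_k\cup T\bigl(F^{(k+1)}\bigr).
\end{equation*}
The levels $E_k,T(E_k),\dots,T^{k-1}(E_k)$ are disjoint: on $P_k$ this holds by Step 1, and on $P_\infty$ the levels are $T^{1}(F^{(k+1)}),\dots,T^{k}(F^{(k+1)})$, which are distinct levels of one Kakutani column. Towers for different $k$ are disjoint because they lie in different $P_k$ or in different columns. The required inclusion also holds. Indeed, $T^k(F_k)=F_k\subseteq E_k$. Moreover,
\begin{equation*}
    T^k\bigl(T(F^{(k+1)})\bigr)=T^{k+1}\bigl(F^{(k+1)}\bigr)\subseteq F,
\end{equation*}
and $F$ meets none of the towers (the base level of each column was deliberately left out). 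Hence $F\subseteq E_\varepsilon$. Finally, $E_\varepsilon$ is contained in
\begin{equation*}
    \bigcup_{k>n}P_k\;\cup\;(P_\infty\setminus R)\;\cup\;F\;\cup\;\{\text{columns of height}>n+1\},
\end{equation*}
which has measure $<\varepsilon$.

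\emph{Main obstacle.} The key point is the requirement $T^k(E_k)\subseteq E_k\cup E_\varepsilon$. A naive Rokhlin tower fails here, since its top level returns to arbitrary levels of the tower. The device of discarding the base $F$ of the skyscraper, which is small, is what makes every tower top land in the junk set. The remaining technical points are routine: the measurability of the transversals $F_k$ and of the return-time sets $F^{(j)}$, and the fact that all statements hold only up to null sets.
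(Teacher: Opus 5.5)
Your argument is correct, but it takes a more elaborate route than the paper, and the ``main obstacle'' you identify does not actually arise. If $E, T(E),\dots,T^{n-1}(E)$ are pairwise disjoint, then invertibility gives $T^n(E)\cap T^j(E)=T^j\bigl(T^{n-j}(E)\cap E\bigr)=\emptyset$ for $1\le j\le n-1$. So the top of an ordinary Rokhlin tower can only land in the base $E$ or outside the tower, and the lemma explicitly allows $T^k(E_k)$ to meet $E_k$.

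The paper uses exactly this observation. It takes fundamental domains $E_k\subseteq X_k$ for $k<n$, as in your Step 1. It puts the periodic points of period at least $n$ into $E_\varepsilon$. For the aperiodic part it uses a single Rokhlin tower $E_n,\dots,T^{n-1}(E_n)$ of height $n$. The inclusion $T^n(E_n)\subseteq E_n\cup E_\varepsilon$ is then the one-line remark above.

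Your Kakutani skyscraper construction is also valid. It uses towers $T(F^{(k+1)})$ of every height $\le n$ and discards the base $F$ into $E_\varepsilon$. This gives the stronger property that, on the aperiodic part, the tower tops land entirely in $E_\varepsilon$. The price is Poincar\'e recurrence, the return-time decomposition and an extra tail estimate on the columns. The stronger property is not needed in the application: Lemma~\ref{lemma:lifting-projections-v2} only uses $\Ad_u\widetilde p^{(i)}_{i-1}\le \widetilde p^{(i)}_0+\widetilde p_\varepsilon$.

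One minor bookkeeping point: you enlarge $n$ in Step 2, so the fundamental domains $F_k$ should be chosen after $n$ is fixed, or for all $k$ at once.
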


\begin{proof}
    For each $k\in \mathbb{N}$, let $X_k$ denote the set of points in $X$ with orbit of size exactly $k$ and $X_\infty$ the set of points with infinite orbit.
    Clearly, $T$ restricts to each $X_k$ for $1\leq k\leq\infty$.
    Pick $n$ large enough so that 
    \begin{equation}
        \mu\left(\bigcup_{n\leq k<\infty}X_k\right)<\varepsilon/2.
    \end{equation}

    For each $1\leq k< n$, find $E_k\subseteq X_k$ such that $X_k$ is the disjoint union of the orbit of $E_k$, see \cite[p.~70,~Lemma~1]{halmos56} for instance.
    Using the Rokhlin lemma, see \cite[p.~71,~Lemma~2]{halmos56} for example, find $E_n \subseteq X_\infty$ such that $E_n, T(E_n), \dots T^{n-1}(E_n)$ are disjoint and 
    \begin{equation}
        \mu\left(\bigcup_{0\leq j\leq n-1}T^j(E_n)\right)>\mu(X_\infty)-\varepsilon/2. 
    \end{equation}
    
    Notice that since $T$ is invertible, $T^k(E_k)$ is disjoint from
    \begin{equation}
        \bigcup_{1\leq j\leq k-1}T^j(E_k)
    \end{equation} 
    and from $X\setminus X_k$, so we must have 
    $T^k(E_k)\subseteq E_k\cup E_\varepsilon$,
    where 
    \begin{equation}
        E_\varepsilon=X\setminus \bigcup_{\substack{1\leq i\leq n\\0\leq j\leq i-1}}T^j(E_i).
    \end{equation}
    The measure estimate holds by construction.
\end{proof}

Lemma~\ref{lemma:lifting-projections-v2} serves as a topological version of Lemma \ref{lemma:rokhlin-lemma}.
If $v$ is invertible so that it is in fact a unitary, and furthermore if the homeomorphism induced by $\Ad_v$ has no periodic points, the analog has been proved in \cite{bdm05}. 
In general, the subspace of points with finite orbits need not be clopen in $X$ and so we cannot reduce to the aforementioned result by restriction.
Moreover, for the purposes of Theorem~\ref{thm:singlegeneration}, we are required to work with non-unitary partial isometries since there is no guarantee that the invertible normalizers of $C(X)$ generate $A$.

\begin{lemma}
    \label{lemma:lifting-projections-v2}
    Let $A$ be a unital $C^*$-algebra containing a unital copy of $C(X)$ with $X$ the Cantor space. Suppose $A$ has a faithful trace $\tau$ which is diffuse on $C(X)$.
    For any partial isometry $v\in N_A(C(X))$ and $\varepsilon>0$, there are pairwise orthogonal projections\footnote{Note that these may be 0.} 
    \begin{equation}
    \{p^{(i,j)}_k \mid i\in \{1,...,n\}, j\in \{1,...,\ell_i\}, k\in \{0, ..., i-1\}\}
    \end{equation} 
    and $p_{\varepsilon}$ in $C(X)$ such that
    \begin{enumerate}[label=(\roman*)]
        \item $p_\varepsilon + \sum_{i,j,k} p_{k}^{(i,j)}=1$;
        
        \item $\tau(p_\varepsilon)\leq \varepsilon$;

        \item $\tau(p_k^{(i,j)})^{2}\leq \frac{\varepsilon}{i \ell_i n}$ for all $1\leq i\leq n$ and all $j$ and $k$;
        
        \item $\Ad_{v} p^{(i,j)}_k \leq p^{(i,j)}_{k+1 \mod i} + p_\varepsilon$ for all $i$, $j$ and $k$.
    \end{enumerate}
\end{lemma}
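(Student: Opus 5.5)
We pass to the tracial von Neumann completion, apply the Rokhlin lemma (Lemma~\ref{lemma:rokhlin-lemma}) there, and transport the resulting towers back to $C(X)$ using Lemma~\ref{lemma:approximating-multiple-projections}.

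\textbf{Step 1: reduce to a measure-preserving transformation.} By Lemma~\ref{lemma:npi_extends}, $v$ extends to a unitary normalizer $u\in N_{\mathcal{M}}(L^\infty(X,\mu_\tau))$ with $vv^*uv^*v=v$. Conjugation by $u$ is a trace-preserving automorphism of $L^\infty(X,\mu_\tau)$. Since $(X,\mu_\tau)$ is a standard probability space, this automorphism is implemented by an invertible measure-preserving transformation $T$ of $X$. Moreover $T$ agrees, up to null sets, with the partial homeomorphism $\alpha_v^{-1}$ (or $\alpha_v$, depending on convention) on the clopen set where $v$ is defined. In projection language this gives $\Ad_v(p)=v p v^*=u(p\,v^*v)u^*$ for every projection $p\in C(X)$.

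\textbf{Step 2: build the towers in $L^\infty$.} Apply Lemma~\ref{lemma:rokhlin-lemma} to $T$ with a small parameter $\delta$. This yields bases $E_1,\dots,E_n$ such that, for each $i$, the sets $T^kE_i$ ($0\le k\le i-1$) are disjoint, the remainder $E_\delta$ has measure less than $\delta$, and $T^iE_i\subseteq E_i\cup E_\delta$. Using diffuseness of $\mu_\tau$, cut each base $E_i$ into $\ell_i$ pieces $E_{i,j}$, each of measure at most $\sqrt{\delta/(i\ell_i n)}$. The $\ell_i$ can be chosen with $\ell_i\gtrsim \mu(E_i)^2 i n/\delta$, so there is no circularity in choosing them. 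Set $P^{(i,j)}_k=T^kE_{i,j}$. These are pairwise orthogonal projections in $L^\infty$ satisfying (iii). They satisfy (iv) exactly relative to $E_\delta$, because $T^i E_{i,j}$ lands in $E_{i}\cup E_\delta$. This gives a problem: (iv) requires $\Ad_v P^{(i,j)}_{i-1}\le P^{(i,j)}_0+p_\varepsilon$, yet the top of the tower only returns to the whole base $E_i$, not to the piece $E_{i,j}$.

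To fix this, I would instead apply Lemma~\ref{lemma:rokhlin-lemma} on each set $X_k$ separately, where $T^k=\mathrm{id}$. On $X_k$ the return map $T^k$ is the identity, so it returns each piece to itself and refining the base is harmless. On the aperiodic part, $T^nE_n$ is essentially disjoint from the tower, so it lands in $E_\delta$ and refinement is again harmless. With this choice, (iv) holds exactly in $L^\infty$ with remainder $E_\delta$.

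\textbf{Step 3: lift to $C(X)$ and control the errors (main obstacle).} Use Lemma~\ref{lemma:approximating-multiple-projections} to choose pairwise orthogonal clopen projections $p^{(i,j)}_k$ that are $\eta$-close in $\|\cdot\|_\tau$ to $P^{(i,j)}_k$, with $\eta\ll\delta/(\text{total number of projections})$. Then define
\[
p_\varepsilon := 1-\sum_{i,j,k}p^{(i,j)}_k\vee\bigvee_{i,j,k}\bigl(\Ad_v p^{(i,j)}_k-\Ad_v p^{(i,j)}_k\, p^{(i,j)}_{k+1}\bigr).
\]
The second part needs care: this must be a genuine clopen projection in $C(X)$, and $\Ad_v p^{(i,j)}_k$ is itself a clopen projection because $v$ is a clopen normalizer. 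After setting $p_\varepsilon$, shrink each $p^{(i,j)}_k$ by $1-p_\varepsilon$ so that orthogonality and (i) hold. Condition (iv) then holds by construction. Condition (iii) survives because the traces only decrease.

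The real content is (ii), i.e.\ showing $\tau(p_\varepsilon)\le\varepsilon$. Since $\Ad_v$ preserves $\|\cdot\|_\tau$-distances on $C(X)$ (it is implemented by $u$ after cutting by $v^*v$), we have
\[
\|\Ad_v p^{(i,j)}_k - P^{(i,j)}_{k+1}\|_\tau\le \eta+\|\Ad_u P^{(i,j)}_k(1-v^*v)\|_\tau+(\text{mass in }E_\delta).
\]
The middle term is the obstacle: where $v$ is not unitary, $\Ad_v$ kills mass. That mass, however, lies in the clopen set $1-v^*v$, and $T$ maps this set to $1-vv^*$, which has the same trace. This set has to be absorbed into the tower structure. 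I would first try to arrange that $1-v^*v$ sits at the tops of towers, by running Step 2 for the partial map. Concretely, points in $1-v^*v$ are treated as tower tops, for which (iv) is vacuous: $\Ad_v$ of such a piece is $0$. So I would set up the towers for the partial homeomorphism $\alpha_v$ directly, as orbits ending at $1-v^*v$, rather than for $u$.

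With that arrangement, summing the squared-$2$-norm defects over finitely many projections gives $\tau(p_\varepsilon)\le\delta+O(N\eta)\le\varepsilon$ for suitable choices of $\delta$ and $\eta$.
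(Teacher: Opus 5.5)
Your overall route is the paper's: extend $v$ to $u$ by Lemma~\ref{lemma:npi_extends}, build Rokhlin towers for $\Ad_u$ in $L^\infty(X,\mu_\tau)$, subdivide the bases to get (iii), and lift with Lemma~\ref{lemma:approximating-multiple-projections}. You are right that subdividing a base is the delicate point. The paper asserts (3') right after subdividing, without separating cases. This is immediate for the periodic towers: in Lemma~\ref{lemma:rokhlin-lemma}, $E_k\subseteq X_k$ is a fundamental domain, so $T^k$ fixes $E_k$ pointwise.

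\textbf{Gap 1: the aperiodic tower.} Your justification here is false. $T^nE_n$ is disjoint from $T^jE_n$ for $1\le j\le n-1$ and from the periodic towers, so $T^nE_n\subseteq E_n\cup E_\delta$. Since $\mu(T^nE_n)=\mu(E_n)$, once $\mu(E_\delta)<\mu(E_n)$ most of $T^nE_n$ returns to the base, and it can land in a different piece. For example, take the dyadic odometer with $E_n$ the cylinder of sequences starting with $m$ zeros and $n=2^m$. Then $E_\delta=\emptyset$, and $T^n$ exchanges the two halves $[0^m0]$ and $[0^m1]$ of the base. The missing step is a truncation. Choose $n$ with $1/n$ small (you may always enlarge $n$) and move the top level $T^{n-1}E_n$ into the remainder. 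The new top level then maps entirely into the remainder, so (iv) holds at the top for every piece of the subdivided base. The cost is at most $\mu(E_n)\le 1/n$ of extra remainder.

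\textbf{Gap 2: the estimate for (ii).} The obstacle you single out in Step~3 does not exist, and because you try to route around it, your proof of (ii) is never completed. The two-sided bound you display cannot be made small, since $\norm{P(1-v^*v)}_\tau$ need not be small, and the towers for the partial map $\alpha_v$ are only announced, not built. Condition (iv) is only an upper bound. For a projection $p\in C(X)$ one has, in $\mathcal{M}$, $vpv^*=u(pv^*v)u^*\le upu^*$. This is what the paper means when it says (2') and (3') persist with $v$ in place of $u$. So mass killed on $1-v^*v$ only helps.

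Your one-shot clean-up is a valid and simpler substitute for the paper's inductive cutting by $1-\Ad_{v^*}(\cdot)$. That clean-up joins $1-\sum p$ with the defects $\Ad_v p_k(1-p_{k+1\bmod i})$, then cuts each $p_k$ by $1-p_\varepsilon$. Conditions (i) and (iv) follow since $C(X)$ is commutative and $\Ad_v$ maps its projections to projections. With the one-sided inequality the estimate closes:
\begin{itemize}
\item For levels with $\Ad_uP_k=P_{k+1}$ (all non-top levels, and the tops of periodic towers), $\tau(\Ad_vp_k(1-p_{k+1}))\le\norm{(1-p_{k+1})up_ku^*}_\tau^2\le(\norm{p_k-P_k}_\tau+\norm{P_{k+1}-p_{k+1}}_\tau)^2\le 4\eta^2$.
\item The truncated aperiodic tops contribute in total at most $1/n$ plus $\eta$ per piece.
\end{itemize}
Together these give $\tau(p_\varepsilon)\le\delta+2/n+O(N\eta)$.

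\textbf{A minor point on (iii).} Lifting can raise traces by up to $\eta$, so "traces only decrease" is not enough on its own. You need the slack from taking $\delta\ll\varepsilon$ and $\eta$ small.
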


\begin{proof}
    Fix $\varepsilon>0$.
    Apply Lemma~\ref{lemma:rokhlin-lemma} with $\varepsilon/2$ to the automorphism $\Ad_u$ of $L^\infty(X,\mu_\tau)$, where $u\in N_{\mathcal{M}}(L^\infty(X,\mu_\tau))$ is a unitary extending $v$ as in Lemma~\ref{lemma:npi_extends}.
    We obtain (by considering indicator functions) $n\in\mathbb{N}$ and pairwise orthogonal projections $\widetilde p_\varepsilon$, $\widetilde p_k^{(i)}\in L^\infty(X,\mu_\tau)$ for some $1\leq i\leq n$ and $0\leq k\leq i-1$.
    These satisfy the following:
    \begin{enumerate}[label=(\arabic*)]
        \item $\widetilde p_\varepsilon+\sum_{i,k} \widetilde p_k^{(i)}=1$,
        \item $\Ad_u \widetilde p_k^{(i)}=\widetilde p^{(i)}_{k+1}$ for all $i$ and $0\leq k\leq i-2$,
        \item $\Ad_u \widetilde p_{i-1}^{(i)}\leq \widetilde p_0^{(i)}+\widetilde p_\varepsilon$,
        \item $\tau(\widetilde p_\varepsilon)\leq \varepsilon/2$.
    \end{enumerate}

    Subdividing each $\widetilde p_0^{(i)}$ into $\widetilde p_0^{(i,j)}$ for $1\leq j\leq \ell_i$ of small enough trace (which is possible by diffuseness of $\tau$) and setting 
    \begin{equation}
        \widetilde p_k^{(i,j)}=\Ad_{u^k}p_0^{(i,j)},
    \end{equation}
    we get pairwise orthogonal projections satisfying

    \begin{enumerate}[label=(\arabic*')]
        \item $\widetilde p_\varepsilon+\sum_{i,j,k} \widetilde p_k^{(i,j)}=1$,
        \item $\Ad_u \widetilde p_k^{(i,j)}=\widetilde p^{(i,j)}_{k+1}$ for all $i$ and $0\leq k\leq i-2$,
        \item $\Ad_u \widetilde p_{i-1}^{(i,j)}\leq \widetilde p_0^{(i,j)}+\widetilde p_\varepsilon$,
        \item $\tau(\widetilde p_\varepsilon)\leq \varepsilon/2$,

        \item $\tau(\widetilde p_k^{(i,j)})^{2}\leq \frac{\varepsilon}{2i \ell_i n}$ for all $i$, $j$ and $k$.
    \end{enumerate}
    Since $v=uv*v$, items (2') and (3') also hold when $u$ is replaced by $v$.
    Fix $\delta>0$, and use Lemma~\ref{lemma:approximating-multiple-projections} to produce pairwise orthogonal projections $\{q^{(i,j)}_k\}_{i,j,k}$ in $C(X)$ such that for all $i$, $j$ and $k$, we have
    \begin{equation}
        \norm{ q_k^{(i,j)}-\tilde p_k^{(i,j)}}_\tau\leq \delta.
    \end{equation}
    Let
    \begin{equation}
    \begin{aligned}
        p_0^{(1,1)}&= q_0^{(1,1)}\\
        p_{0,\delta}^{(1,1)}&=(1-p^{(1,1)}_0)\Ad_v p_0^{(1,1)}        
    \end{aligned}
    \end{equation}
    so that $\Ad_v p_0^{(1,1)}\leq p_0^{(1,1)}+p_{0,\delta}^{(1,1)}$.

    We now wish to construct projections $p_0^{(1,2)}$ and $p_{0,\delta}^{(1,2)}$ such that $\Ad_v p_0^{(1,2)}\leq p_0^{(1,2)}+p_{0,\delta}^{(1,2)}$.
    Moreover we want these two projections to be mutually orthogonal and also orthogonal to $p_0^{(1,1)}$ and $p_{0,\delta}^{(1,1)}$.

    In particular, we need to ensure that $\Ad_v p_0^{(1,2)}$ does not intersect $p_0^{(1,1)}+p_{0,\delta}^{(1,1)}$.
    To do that, we cut by the complement of $\Ad_{v^*}(p_0^{(1,1)}+p_{0,\delta}^{(1,1)})$.
    Explicitly, we set
    \begin{equation}
    \begin{aligned}
        p_0^{(1,2)}&= q_0^{(1,2)}(1-\Ad_{v^*}(p_0^{(1,1)}+p_{0,\delta}^{(1,1)}))\\
        p_{0,\delta}^{(1,2)}&=(1-p_0^{(1,2)})\Ad_v p_0^{(1,2)}.
    \end{aligned}
    \end{equation}

    Proceeding inductively, we construct pairwise orthogonal projections $p_0^{(1,j)}$ and $p_{0,\delta}^{(1,j)}$ for $1\leq j\leq \ell_1$ such that
    \begin{equation}
        \Ad_v p_0^{(1,j)}\leq p_0^{(1,j)}+p_{0,\delta}^{(1,j)}.
    \end{equation}

    More generally, we employ the same strategy to inductively construct pairwise orthogonal projections $p_k^{(i,j)}$ and $p_{k,\delta}^{(i,j)}$ for $1\leq i\leq n$, $1\leq j \leq \ell_i$ and $0\leq k\leq i-1$ such that
    \begin{equation}
    \label{eq:pij-ad}
        \Ad_v p_k^{(i,j)}\leq p_{k+1 \mod i}^{(i,j)}+p_{j,\delta}^{(i,j)}.
    \end{equation}
    To this end, let $p$ be the sum of all projections defined up to this point and fix $i,j$.
    For $0\leq k\leq i-1$ let
    \begin{equation}
    \label{eq:pij-definition}
        p_k^{(i,j)}=q_k^{(i,j)}(1-p)(1-\Ad_{v^*}p)\prod_{k'\neq k +1\mod i}(1-\Ad_{v^*} q_{k'}^{(i,j)}),
    \end{equation}
    and afterwards set
    \begin{equation}
    \label{eq:pij-eps-definition}
                p_{k,\delta}^{(i,j)} = (1-p^{(i,j)}_{k+1 \mod i}) \Ad_v p^{(i,j)}_k.
    \end{equation}
    Note that $p_{k,\delta}^{(i,j)}$ is orthogonal to $p$, as $\Ad_v p_{k}^{(i,j)}$ is.
    Moreover, since $\Ad_v p_k^{(i,j)}$ is orthogonal to $q_{k'}^{(i,j)}$ for $k'\neq k\mod i$, we get that $p_{k,\delta}^{(i)}$ is orthogonal to all $p_{k}^{(i,j)}$.
    Finally, since the $q_k^{(i,j)}$ are pairwise orthogonal, so are the $p_k^{(i,j)}$ and so their images under $\Ad_v$ are as well.
    Hence, the $p_{k,\delta}^{(i,j)}$ are pairwise orthogonal. 
    The inequality \eqref{eq:pij-ad} is satisfied by construction.
    
    Set
    \begin{equation}
        p_\varepsilon=1-\sum_{i,j,k} p_k^{(i,j)},
    \end{equation}
    and note that by orthogonality, $p_{k,\delta}^{(i,j)}\leq p_\varepsilon$ for all $i$, $j$ and $k$.
    We have thus established conditions (i) and (iv) in the statement so it remains to show that (ii) and (iii) hold provided $\delta$ is chosen small enough.

    We claim that for each $i$, $j$ and $k$, there is a constant $C_{i,j,k}$ depending only on $n$ and the $\ell_i$, such that
    \begin{equation}
        \|p_k^{(i,j)}-\tilde p_k^{(i,j)}\|_\tau\leq C_{i,j,k}\delta.
    \end{equation}
    Indeed, using the fact that $\|q_k^{(i,j)}-\tilde p_k^{(i,j)}\|_\tau\leq \delta$ and that all elements we are working with have norm one, in any expression defining $p_k^{(i,j)}$ or $p_{k,\delta}^{(i,j)}$ one can (inductively) replace all occurrences of $q_k^{(i,j)}$ by $\tilde p_k^{(i,j)}$ at the cost of a finite number $C_{i,j,k}$ of deltas.
    After this replacement, the right hand sides of \eqref{eq:pij-definition} and \eqref{eq:pij-eps-definition} collapse to $\tilde p_k^{(i,j)}$ and 0 respectively. 
    This proves the claim.

    Letting $C=\sum_{i,j,k}^n C_{i,j,k}$, it follows from condition (1') and (4') that $\tau(p_\varepsilon)\leq C\delta+\varepsilon/2$. Choosing
    \begin{equation}
      \delta\leq \min_{1\leq i\leq n}\left\{\frac{\varepsilon}{4Ci\ell_i n},\frac{\varepsilon}{2C}\right\}
    \end{equation}
    guarantees that $\tau(p_\varepsilon)\leq \varepsilon$ and thus condition (ii) holds. 
    Moreover, we compute for all $i$, $j$ and $k$
    \begin{equation}
        |\tau((p_k^{(i,j)}))^2-\tau((\tilde p_k^{(i,j)}))^2|\leq 2|\tau(p_k^{(i,j)})-\tau(\tilde p_k^{(i,j)})|\leq 2||p_k^{(i,j)}-\tilde p_k^{(i,j)}||_\tau\leq \frac{\varepsilon}{2i\ell_i  n}.
    \end{equation}
    Combining the above estimate with condition (5'), verifies condition (iii) and finishes the proof.
\end{proof}

We now apply the previous lemma to obtain the desired partition of unity.

\begin{prop} \label{prop:cnpi_smallarea}
    Let $A$ be a unital $C^*$-algebra containing a unital copy of $C(X)$ with $X$ the Cantor space. Suppose $A$ admits a faithful trace $\tau$ which is diffuse on $C(X)$.
    Any normalizing partial isometry $v \in N_A(C(X))$ has small area w.r.t.\ $(C(X),\tau)$.
\end{prop}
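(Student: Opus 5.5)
Fix $\varepsilon>0$ and apply Lemma~\ref{lemma:lifting-projections-v2} to $v$ with a parameter $\varepsilon'$ chosen in terms of $\varepsilon$ at the end. This yields pairwise orthogonal projections $p_k^{(i,j)}$ and $p_\varepsilon$ in $C(X)$ that sum to $1$. Discard the zero ones, so that the remaining nonzero projections form a partition of unity $\mathcal{P}$ for $C(X)$. It then suffices to bound $\mathcal{I}(v\mid\mathcal{P})$ by a constant multiple of $\varepsilon'$ (or of $\sqrt{\varepsilon'}$).

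Split the sum defining the area according to whether the pair $(P,Q)\in\mathcal{P}^2$ with $PvQ\neq 0$ involves $p_\varepsilon$. The pairs with $P=p_\varepsilon$ or $Q=p_\varepsilon$ contribute at most $2\tau(p_\varepsilon)\sum_{R\in\mathcal{P}}\tau(R)=2\tau(p_\varepsilon)\leq 2\varepsilon'$. For the remaining pairs, write $Q=p_k^{(i,j)}$. Since $v$ is a normalizing partial isometry, $vQv^*=\Ad_v Q$ is a projection in $C(X)$, and condition (iv) gives $vQv^*\leq p^{(i,j)}_{k+1\bmod i}+p_\varepsilon$. Hence for any $P\in\mathcal{P}\setminus\{p_\varepsilon\}$,
\begin{equation}
PvQ\neq 0 \implies P vQv^* P\neq 0 \implies P=p^{(i,j)}_{k+1 \bmod i},
\end{equation}
using $(PvQ)(PvQ)^*=PvQv^*P$. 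So each nonzero $Q$ has at most one partner $P\neq p_\varepsilon$ with $PvQ\neq0$.

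The sum over these pairs is therefore at most $\sum_{i,j,k}\tau(p^{(i,j)}_{k+1})\tau(p^{(i,j)}_k)\leq\sum_{i,j,k}\max(\tau(p^{(i,j)}_{k+1})^2,\tau(p^{(i,j)}_k)^2)$. By condition (iii), each term is at most $\varepsilon'/(i\ell_i n)$, and the number of triples with fixed $i$ is $i\ell_i$, so the total is at most $\sum_{i=1}^n \varepsilon'/n=\varepsilon'$. Altogether $\mathcal{I}(v\mid\mathcal{P})\leq 3\varepsilon'$, and choosing $\varepsilon'=\varepsilon/4$ finishes the proof.

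The main point to get right is the implication $PvQ\neq0\Rightarrow P\cdot \Ad_v(Q)\neq 0$, which uses that $\Ad_v$ maps projections of $C(X)$ to projections of $C(X)$ lying below $vv^*$. Only in that form can condition (iv) be invoked. The rest is bookkeeping; the essential work was already done in Lemma~\ref{lemma:lifting-projections-v2}.
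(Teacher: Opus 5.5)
Your proposal is correct and is essentially the paper's proof: both apply Lemma~\ref{lemma:lifting-projections-v2}, use condition (iv) to see that the only possible left partners of $p_k^{(i,j)}$ are $p^{(i,j)}_{k+1 \bmod i}$ and $p_\varepsilon$, bound the contributions involving $p_\varepsilon$ by $2\tau(p_\varepsilon)$, and bound the remaining ones via (iii). Your extra care makes explicit what the paper leaves implicit: discarding zero projections to get a genuine partition of unity, justifying $PvQ\neq 0\Rightarrow P\,\Ad_v(Q)\,P\neq 0$ through $(PvQ)(PvQ)^*$, and securing a strict inequality.
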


\begin{proof}
    Fix $\varepsilon>0$. 
    Let $\{p^{(i,j)}_k\}_{i,j,k}$ and $p_\varepsilon$ be the projections in $C(X)$ obtained by applying Lemma~\ref{lemma:lifting-projections-v2} to $v$ and $\varepsilon/3$.
    Since $v$ is a normalizer of $C(X)$ we have 
    \begin{equation}
    \begin{aligned}
        qvp_k^{(i,j)}\neq 0\
            & \iff q\Ad_v p^{(i,j)}_k \neq 0 \\
            &\,\implies q=p_{k+1\mod i}^{(i,j)}, \text{ or }p_\varepsilon
    \end{aligned}
    \end{equation}
    for any $q\in \{p^{(i,j)}_k\}_{i,j,k}\cup \{p_\varepsilon\}$.
    This allows us to bound the area of $v$ as follows:
    \begin{equation}
    \begin{aligned}
    \mathcal{I}\left(v\mid \set{p_k^{(i,j)}}\cup \{p_\varepsilon\}\right)
        &=\tau(p_\varepsilon)\sum_{q:qvp_\varepsilon\neq 0}\tau(q)+\sum_{i,j,k}\tau(p_k^{(i,j)})\sum_{q:qvp_k^{(i,j)}\neq 0} \tau(q)\\
        &\leq\tau(p_\varepsilon)\cdot 1 + \sum_{i,j,k}\tau(p_k^{(i,j)}) \left(\tau(p_\varepsilon) + \tau(p_{k+1\mod i}^{(i,j)}) \right) \\
        &\leq 2\tau(p_\varepsilon) + \sum_{i,j,k} \frac{\varepsilon}{3i\ell_i n} \\
        &\leq 2\varepsilon/3 + \varepsilon/3 =\varepsilon.
    \end{aligned}
    \end{equation}
    This finishes the proof.
\end{proof}

\begin{cor} \label{cor:smallgenerators}
    Let $A$ be a unital, separable $C^*$-algebra with a faithful trace $\tau$, containing a unital copy of $C(X)$ with $X$ the Cantor space. If $C(X)$ is regular in $A$ and $\tau$ is diffuse on $C(X)$, then there exists a countable set $\set{x_n}_n \subset A$ generating $A$ consisting of elements with small area w.r.t.\ $(C(X),\tau)$.
\end{cor}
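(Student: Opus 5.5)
The plan is to produce a countable generating set of normalizing partial isometries and then invoke Proposition~\ref{prop:cnpi_smallarea}. Since $C(X)$ is regular in $A$, we have $A=C^*(N_A(C(X)))$. By Lemma~\ref{lemma:clopen-normalizers}, $N_A^c(C(X))$ is dense in $N_A(C(X))$. Hence $A$ is the norm closure of the $*$-algebra generated by $N_A^c(C(X))$, which means $A=C^*(N_A^c(C(X)))$. The second part of Lemma~\ref{lemma:clopen-normalizers} writes each $a\in N_A^c(C(X))$ as $a=v\abs{a}$, with $\abs{a}\in C(X)$ and $v\in N_A^c(C(X))$ a partial isometry. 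Therefore $A$ is generated by $C(X)$ together with the set $V$ of normalizing partial isometries.

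Next we reduce to a countable subfamily. Because $A$ is separable, the subset $V\cup C(X)$ is separable as a metric space. So there is a countable subset $V_0\subseteq V$ dense in $V$, and then $C^*(V_0\cup C(X))=A$. As a further countable piece, note that $C(X)$ is generated by its countably many projections, which correspond to the countably many clopen subsets of the Cantor space.

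Each projection $p\in C(X)$ is itself a normalizing partial isometry: it normalizes $C(X)$ because $C(X)$ is commutative. Alternatively, $p$ commutes with $C(X)$, so Corollary~\ref{cor:commuting_smallarea} applies directly. Either way, every element of the countable set $\{x_n\}_n:=V_0\cup\{\text{projections of } C(X)\}$ has small area with respect to $(C(X),\tau)$ by Proposition~\ref{prop:cnpi_smallarea}, and this set generates $A$.

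The only point needing care is the density step. Approximating $a$ in norm by $ap$ with $p=\mathbbm{1}_K$ keeps us inside the normalizers, and a norm-dense subset of a generating set still generates the same $C^*$-algebra, since $C^*$-algebras are norm closed. Beyond this the argument is bookkeeping, because all the analytic work is contained in Proposition~\ref{prop:cnpi_smallarea}.
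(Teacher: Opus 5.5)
Your proof is correct and follows essentially the same route as the paper: regularity and the density of clopen normalizers (Lemma~\ref{lemma:clopen-normalizers}) give a countable generating family, and the polar decomposition splits each element into a normalizing partial isometry, which has small area by Proposition~\ref{prop:cnpi_smallarea}, and a part in $C(X)$, which has small area because it commutes with $C(X)$ (Corollary~\ref{cor:commuting_smallarea}). The only differences are cosmetic: you add all (countably many) projections of $C(X)$ rather than just the moduli $\abs{b_n}$ of a countable generating set of clopen normalizers, and you spell out the separability and density step that the paper states in one line.
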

\begin{proof}
By separability of $A$, regularity of $C(X)$ and Lemma \ref{lemma:clopen-normalizers}, there exists a countable set $\{b_n\}_n \subset N_A^c(C(X))$ of clopen normalizers of $C(X)$ generating $A$. Fix $n\in \mathbb N$. By Lemma \ref{lemma:clopen-normalizers}, we can write $b_n = v_n |b_n|$ for some partial isometry $v_n \in N_A^c(C(X))$. As $|b_n| \in C(X)$, it commutes with $C(X)$, so it has small area w.r.t.\ $(C(X),\tau)$ by Corollary~\ref{cor:commuting_smallarea}. The previous proposition implies that $v_n$ also has small area, so we can conclude that the countable set
\begin{equation}
    \set{v_n}_n \cup \set{|b_n|}_n
\end{equation}
generates $A$ and consists of elements of small area in $C(X)$.
\end{proof}

\subsection{Examples}

With Corollary~\ref{cor:smallgenerators} in hand, we are now ready to use Theorem~\ref{thm:singlegeneration} to provide a positive answer to the generator problem for several classes of $C^*$-algebras. An obvious class of $C^*$-algebras which contain the continuous functions on the Cantor space as a regular subalgebra, are crossed products of actions on the Cantor space, resulting in Corollary \ref{cor:sg_crossedprod} below. Note that the diffuseness assumption is no longer necessary as it is automatic due to Corollary~\ref{cor:diffuse}. We also want to highlight that the result can be generalized straightforwardly to the setting of groupoids, but we state it in the language of crossed products because that is where our main concrete examples lie.

\begin{cor} \label{cor:sg_crossedprod}
    Let $G$ be a countable discrete group with a subgroup $F$, and let $\alpha\colon G\curvearrowright X$ be an action of $G$ on the Cantor space $X$ such that $C(X)\rtimes_\alpha G$ has a faithful trace. Suppose the restricted action $\beta :=\alpha|_F\colon F \curvearrowright X$ induces a monotracial, simple crossed product $C(X)\rtimes_{\beta} F$ such that $C(X)\rtimes _r F$ has strict comparison or $C(X)$ has dynamical comparison relative to $C(X)\rtimes_\beta F$.
    Then $C(X)\rtimes_{\alpha} G$ is singly generated.
\end{cor}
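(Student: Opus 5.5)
The plan is to apply Theorem~\ref{thm:singlegeneration} to the chain of unital inclusions
\begin{equation}
    C(X)\subseteq B:=C(X)\rtimes_\beta F\subseteq A:=C(X)\rtimes_\alpha G ,
\end{equation}
where the inclusion $B\subseteq A$ comes from the inclusion of the reduced crossed product for the subgroup $F$ into that for $G$. This is isometric, since the canonical conditional expectation $A\to C(X)$ restricts to the one on $B$, and both are faithful. $A$ is separable because $G$ is countable and $X$ is metrizable. By hypothesis $B$ is simple with a unique trace $\tau_B$, and either has strict comparison or $C(X)$ has dynamical comparison relative to $B$. To invoke the theorem we therefore need two things: that $\tau_B$ is diffuse on $C(X)$, and that $A$ has a countable generating set of elements with small area with respect to $(C(X),\tau)$.

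\textbf{Diffuseness.} $C(X)$ is regular in $B$, since the unitaries $u_g$ for $g\in F$ normalize $C(X)$ and together with $C(X)$ generate $B$. As $X$ is infinite and $B$ is simple, Corollary~\ref{cor:diffuse} shows that $\tau_B$ is diffuse on $C(X)$.

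\textbf{Small-area generators.} Here there is a subtlety: Theorem~\ref{thm:singlegeneration} measures area with respect to the trace of $B$ restricted to $C(X)$, whereas Corollary~\ref{cor:smallgenerators} requires a faithful trace on $A$ that is diffuse on $C(X)$. Let $\tau_A$ be the faithful trace on $A$ given by hypothesis. Its restriction to $B$ is a trace on $B$, hence equals $\tau_B$ by uniqueness. So $\tau_A$ and $\tau_B$ agree on $C(X)$, and $\tau_A$ is diffuse on $C(X)$. Since $C(X)$ is regular in $A$ (the $u_g$ for $g\in G$ normalize it), Corollary~\ref{cor:smallgenerators} applied to $(A,\tau_A)$ gives a countable generating set of elements with small area with respect to $(C(X),\tau_A|_{C(X)})=(C(X),\tau_B|_{C(X)})$. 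Theorem~\ref{thm:singlegeneration} then yields that $A$ is singly generated.

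\textbf{Main obstacle.} The one genuinely non-formal step is the trace-compatibility just described. It works because area only depends on the trace through its values on projections in $C(X)$, and uniqueness of the trace on $B$ forces any trace on $A$ to restrict correctly there. Everything else is a direct verification of hypotheses. One should also confirm that the faithful trace assumed on $A$ is what makes Corollary~\ref{cor:smallgenerators} applicable: Lemma~\ref{lemma:npi_extends} requires faithfulness, and one checks that nonzero elements of the relevant normalizer sets are detected by $\tau_A$.
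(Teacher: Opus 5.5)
Your proposal is correct and follows the paper's proof: you apply Theorem~\ref{thm:singlegeneration} to $C(X)\subseteq C(X)\rtimes_\beta F\subseteq C(X)\rtimes_\alpha G$, get diffuseness from Corollary~\ref{cor:diffuse} via regularity of $C(X)$ in the simple algebra $B$, and get small-area generators from Corollary~\ref{cor:smallgenerators}. The paper leaves implicit two points you make explicit, namely that the faithful trace on $A$ restricts to the unique trace on $B$ (so both corollaries use the same measure on $X$) and that $B$ embeds in $A$, and your handling of both is sound.
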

\begin{proof}
First note that Corollary~\ref{cor:smallgenerators} holds for $A$ as $C(X)$ is regular in $B$ so $\tau$ is diffuse on $C(X)$ by Corollary~\ref{cor:diffuse}. The statement now follows immediately from Theorem~\ref{thm:singlegeneration} for $A = C(X) \rtimes G$ and $B = C(X) \rtimes F$.
\end{proof}

We want to highlight the following classes of $C^*$-algebras for which the above corollary applies with $F=G$, and therefore are singly generated. 
\begin{enumerate}
    \item Crossed products of minimal, uniquely ergodic actions with dynamical comparison\footnote{Until very recently, it was not known whether every minimal action on the Cantor space has dynamical comparison. The first examples without dynamical comparison have been obtained in \cite{paolo}.} (see remark below Definition~\ref{def:dynamicalcomp}) of countable, discrete, $C^*$-simple groups on the Cantor space. These crossed products are simple by \cite[Theorem~1.8]{BKKO17} and have a unique trace by \cite[Corollary~1.4]{bryderkennedy}.
    
    \item \label{sg_profinite} Crossed products of free and minimal profinite actions (as defined in Remark \ref{rmk:profinite} below) of countable, discrete groups. Minimal profinite actions have dynamical comparison and are uniquely ergodic, see \cite[Lemma~2.4, Lemma~3.1]{joseph}.
    The crossed product is thus simple (\cite[Corollary on p.~122]{archboldspielberg}) and has a unique trace (\cite{zeller-meier}). 
    
    \item \label{sg_buncededdens} As a special case of the above, the non-nuclear Bunce--Deddens algebras recently introduced by Bell in \cite{bell}. 
    In this paper, it is shown that these crossed products arise as profinite actions on the Cantor space.
    
    \item Selfless crossed products, see \cite{bell-selfless}.
    For instance, actions of groups $G$ with property PHP on Cantor space $X$ preserving a Borel probability measure such that some infinite amenable subgroup acts minimally and uniquely ergodically on $X$.
\end{enumerate}

Crossed products in class (\ref{sg_profinite}) above are not \ensuremath{\mathcal{Z}}-stable whenever the acting group is not inner amenable, i.e.\ when $G$ admits an atomless probability measure which is invariant under conjugation. This is a consequence of the fact that any tracial GNS completion of a $\mathcal{Z}$-stable \ensuremath{C^{*}}-algebra is McDuff whereas by \cite{choda} (see also \cite[Theorem~5.4]{OzawaPopa}), the GNS completions of the crossed products in (3) are full whenever the acting group is not inner amenable, and therefore not McDuff. The same argument is given in \cite[Proposition~4.7]{bell} to show that the algebras in class (\ref{sg_buncededdens}) are never $\mathcal{Z}$-stable, as the acting groups are not inner amenable.

\begin{rmk}[Profinite actions] \label{rmk:profinite}
    Let $\alpha\colon G\curvearrowright X$ be an action of a discrete countable group $G$ on the Cantor space $X$. The action $\alpha$ is said to be \emph{profinite} if it is induced by actions $\{\alpha_{i}\colon G\curvearrowright X_i\}_{i\in\mathbb{N}}$ on an inverse system of topological spaces
    \begin{equation}
        X\xrightarrow{} \dots\xrightarrow{} X_{3}\xrightarrow{\pi_{2}} X_{2}\xrightarrow{\pi_{1}} X_{1}
    \end{equation}
    with $X$ as its limit, and with surjective connecting maps.
    
    One can easily obtain free and minimal profinite actions of infinite, residually finite groups as follows.
    Given an infinite, residually finite group $G$, choose a decreasing sequence of normal subgroups 
    \begin{equation}
        G\trianglerighteq N_1 \trianglerighteq N_2\trianglerighteq\dots
    \end{equation}
    with trivial intersection.
    One obtains the inverse system 
    \begin{equation}
        X\rightarrow\dots \rightarrow G/N_2\rightarrow G/N_1
    \end{equation}
    equipped with the natural action of $G$ by coset shifting and with inverse limit homeomorphic to the Cantor space.
    The action of $G$ on $X$ is then free and minimal (see \cite[p.~3 and p.~4]{cortezpetite}).
\end{rmk}

The following corollary provides a family of amalgamated free products which are singly generated.

\begin{cor}
\label{cor:amalgams}
    Let $\{A_i\}_{i\in I}$ be an at most countable family of unital separable \ensuremath{C^*}-algebras with faithful traces, all containing a regular abelian subalgebra $C(X)$ with spectrum homeomorphic to the Cantor space.
    Suppose that $A_1$ is simple with a unique trace and that $A_1$ either has strict comparison or $C(X)$ has dynamical comparison w.r.t. $A_1$.
    Then the full amalgamated product $A_1*_{C(X)}A_2*_{C(X)}\dots$ is singly generated.
    Consequently, any completion of the algebraic amalgamated free product is singly generated.
\end{cor}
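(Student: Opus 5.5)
We will apply Theorem~\ref{thm:singlegeneration} with $B = A_1$ and $A$ the full amalgamated free product $A := A_1*_{C(X)}A_2*_{C(X)}\cdots$. Separability of $A$ is clear, since it is generated by the images of countably many separable algebras. The canonical maps $A_i\to A$ need not be injective a priori for the full amalgamated product. We therefore first check that $A_1\to A$ is injective, so that $C(X)\subseteq A_1\subseteq A$ are genuine unital inclusions. For this, use the faithful traces: the canonical conditional expectations $E_i\colon A_i\to C(X)$ are needed, but in general we only have traces. The cleaner route is to use that each $A_i$ has a faithful trace $\tau_i$. The simplicity of $A_1$ gives injectivity of $A_1\to A$ as soon as the map is nonzero, which holds since it is unital. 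So $A_1$ (and hence $C(X)$) embeds unitally in $A$, and $\tau$, the unique trace of $A_1$, is diffuse on $C(X)$ by Corollary~\ref{cor:diffuse}, because $C(X)$ is regular in the simple algebra $A_1$ and $X$ is infinite.

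Next we produce a countable generating set of small area. Note that Definition~\ref{def:smallarea} only uses the faithful trace on the subalgebra $D=C(X)$, namely $\tau|_{C(X)}$; the trace on $A$ plays no role in the definition. For each $i$, regularity of $C(X)$ in $A_i$, separability of $A_i$ and Lemma~\ref{lemma:clopen-normalizers} provide countably many partial isometries $v^{(i)}_n\in N_{A_i}(C(X))$ and elements $|b^{(i)}_n|\in C(X)$ generating $A_i$, exactly as in the proof of Corollary~\ref{cor:smallgenerators}. The key point is that the small-area property of a normalizing partial isometry $v\in N_{A_i}(C(X))$ is proved in Proposition~\ref{prop:cnpi_smallarea} inside $A_i$, with respect to a faithful trace $\tau_i$ on $A_i$ that is diffuse on $C(X)$. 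Since area only involves vanishing of products $p v q$ (a condition preserved by the injective map $A_i\to A$, if injective, or at least implied, since $pvq=0$ in $A_i$ forces $pvq=0$ in $A$) and traces of projections in $C(X)$, smallness passes to $A$ provided $\tau_i|_{C(X)} = \tau|_{C(X)}$.

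This identification of measures is the main obstacle. We plan to handle it as follows. Since $C(X)$ is regular in the simple, monotracial $A_1$, every normalizer of $C(X)$ in $A_1$ preserves $\mu_\tau$, and the measure $\mu_\tau$ is the one we must use. For $i\neq 1$, we only have some faithful $\tau_i$, so we argue directly: we replace $\tau_i$ by any trace whose restriction to $C(X)$ is $\mu_\tau$. If the paper's standing convention is that the traces agree on the amalgam (which is needed anyway for the reduced completion to carry a trace), we use that directly. Otherwise, we prove small area of $v\in N_{A_i}(C(X))$ with respect to $\mu_\tau$ by rerunning Lemma~\ref{lemma:lifting-projections-v2}, which only requires the partial homeomorphism $\alpha_v$ to be $\mu_\tau$-nonsingular/preserving; this we would verify from the existence of the faithful trace $\tau_i$ and the regularity assumptions. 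Elements of $C(X)$ have small area by Corollary~\ref{cor:commuting_smallarea}, since only $\tau|_{C(X)}$ diffuse is used there.

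Finally, the union over $i$ of these countable generating sets generates $A$ and consists of elements of small area with respect to $(C(X),\tau)$. Theorem~\ref{thm:singlegeneration}, with $B=A_1$ having strict comparison or $C(X)$ having dynamical comparison relative to $A_1$, gives single generation of $A$. Any completion of the algebraic amalgamated free product is a quotient of the full one, and quotients of singly generated algebras are singly generated, which gives the final assertion.
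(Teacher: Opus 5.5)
Your route is the paper's: Theorem~\ref{thm:singlegeneration} with $B=A_1$, generators of each $A_i$ taken as in Corollary~\ref{cor:smallgenerators}, and the quotient argument for other completions. Two of your additions are correct and fill points the paper leaves implicit. First, $A_1\to A$ is injective by simplicity (when $A\neq 0$). Second, $1_{P'}x1_P=0$ in $A_i$ implies the same in $A$, so area only decreases under $A_i\to A$ and only $\tau|_{C(X)}$ matters.

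The gap is the step you yourself flag, and neither branch closes it. The hypotheses do not give $\tau_i|_{C(X)}=\tau|_{C(X)}$. The paper's two-line proof also passes over this: it applies Corollary~\ref{cor:smallgenerators} to $C(X)\subseteq A_i$ with $\tau_i$, while Theorem~\ref{thm:singlegeneration} needs small area for the trace of $A_1$. Your fallback cannot work, because a faithful trace on $A_i$ only makes $\alpha_v$ preserve $\mu_{\tau_i}$, not $\mu_\tau$. For example, take $A_1=C(X)\rtimes_S\mathbb{Z}$ with $S$ the $2^\infty$-odometer, a Bunce--Deddens algebra whose unique invariant measure $\mu$ takes values in $\mathbb{Z}[1/2]$ on clopens. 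Take $A_2=C(X)\rtimes_T\mathbb{Z}$ with $T$ the $3^\infty$-odometer transported to $X$ by a homeomorphism. Its unique invariant measure takes values in $\mathbb{Z}[1/3]$ on clopens, so $T$ does not preserve $\mu$. Hence no trace on $A_2$ induces $\mu$, and neither Lemma~\ref{lemma:npi_extends} nor the measure-preserving Rokhlin lemma is available for $\mu$. Moreover, for $i\neq 1$, $\tau_i$ need not be diffuse on $C(X)$, which Corollary~\ref{cor:smallgenerators} requires.

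The missing idea is that a normalizing partial isometry has small area for every faithful diffuse measure on $X$, by an elementary argument with no invariance. Let $v\in N_{A_i}(C(X))$ be a partial isometry with $v^*v=1_U$, $vv^*=1_V$ and $\alpha=\alpha_v\colon U\to V$, so that $v1_Ev^*=1_{\alpha(E\cap U)}$ for clopen $E$.
\begin{itemize}
\item Given $\varepsilon>0$, choose a clopen partition $\mathcal{Q}$ of $X$ with $\mu_\tau(Q)<\varepsilon$ for all $Q\in\mathcal{Q}$, which diffuseness allows.
\item Let $\mathcal{P}$ consist of the nonempty sets among $Q\cap\alpha^{-1}(Q'\cap V)$ and $Q\setminus U$, for $Q,Q'\in\mathcal{Q}$. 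Each piece of $\mathcal{P}$ lies in a single member of $\mathcal{Q}$.
\item We have $1_{P'}v1_P\neq 0$ if and only if $P'\cap\alpha(P\cap U)\neq\emptyset$.
\item Since $\alpha\bigl(Q\cap\alpha^{-1}(Q'\cap V)\bigr)\subseteq Q'$, every such $P'$ lies in $Q'$. Pieces $P\subseteq X\setminus U$ contribute nothing.
\end{itemize}
Hence $\mathcal{I}(v\mid\mathcal{P})\leq\sum_P\mu_\tau(P)\,\varepsilon\leq\varepsilon$. The elements $|b^{(i)}_n|\in C(X)$ are handled by Corollary~\ref{cor:commuting_smallarea}, which only uses diffuseness of $\mu_\tau$; this holds by Corollary~\ref{cor:diffuse} applied to $A_1$. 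With this, all your generators have small area with respect to $(C(X),\tau)$. Your argument then goes through without any compatibility between the traces, and in fact without using $\tau_i$ for $i\neq 1$ at all.
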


\begin{proof}
    Apply Theorem~\ref{thm:singlegeneration} with $B=A_1$ and Corollary~\ref{cor:smallgenerators} for each of the inclusions $C(X)\subseteq A_i$; note that the unique trace on $A_1$ is diffuse on $C(X)$ by Corollary~\ref{cor:diffuse}.
    Since the full amalgamated product surjects onto any other completion, the result follows.
\end{proof}

As commutation automatically ensures small area due to Corollary~\ref{cor:commuting_smallarea}, any tensor product can be added on to these examples preserving single generation. 

\begin{cor} \label{cor:sg_tensorprod}
    Let $A$ be a $C^*$-algebra satisfying the conditions of Theorem \ref{thm:singlegeneration}. For any unital, separable $C^*$-algebra $D$ and any $C^*$-norm $\alpha$ on the algebraic tensor product $A \odot D$, the $C^*$-algebra $A\otimes_\alpha D$ is singly generated.
\end{cor}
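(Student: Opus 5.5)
The plan is to apply Theorem~\ref{thm:singlegeneration} to the chain of unital inclusions $C(X)\subseteq B\subseteq A\otimes_\alpha D$, embedding $A$ as $A\otimes 1_D$. Since $A$ satisfies the hypotheses of Theorem~\ref{thm:singlegeneration}, we are given $C(X)\subseteq B\subseteq A$ with $B$ simple and monotracial, the trace $\tau$ diffuse on $C(X)$, strict or dynamical comparison, and a countable generating set $\{x_n\}_n\subset A$ of elements with small area w.r.t.\ $(C(X),\tau)$. Every hypothesis of the theorem that concerns only $C(X)\subseteq B$ is unaffected when we pass to $C(X)\otimes 1\subseteq B\otimes 1\subseteq A\otimes_\alpha D$. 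The algebra $A\otimes_\alpha D$ is separable, since both $A$ and $D$ are separable.

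Next I would produce a countable generating set of $A\otimes_\alpha D$ consisting of elements of small area. Fix a countable dense subset $\{d_m\}_m$ of $D$. Then the set
\begin{equation}
\{x_n\otimes 1_D\}_n\cup\{1_A\otimes d_m\}_m
\end{equation}
generates $A\otimes_\alpha D$, because the algebraic tensor product $A\odot D$ is dense in any $C^*$-completion and its elementary tensors are generated by these elements.

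For the small-area property of $x_n\otimes 1$, take a partition of unity $\{p_i\}$ of $C(X)$ with $\mathcal{I}(x_n\mid\{p_i\})<\varepsilon$. We have $(p_i\otimes 1)(x_n\otimes 1)(p_j\otimes 1)=p_ix_np_j\otimes 1$. Because the map $a\mapsto a\otimes 1$ is injective for any $C^*$-norm, this element is nonzero exactly when $p_ix_np_j\neq 0$. The traces $\tau(p_i)$ are computed on $C(X)$ and so do not change, hence the area is identical.

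For $1\otimes d_m$, observe that it commutes with $C(X)\otimes 1$. I would then invoke the argument of Corollary~\ref{cor:commuting_smallarea}. Here a small subtlety arises, since that corollary is phrased for a faithful trace on the ambient algebra. However, area in Definition~\ref{def:area} only uses $\tau$ on the subalgebra $C(X)$, where it is faithful and diffuse. The proof of Corollary~\ref{cor:commuting_smallarea} only uses Lemma~\ref{lemma:approximating-multiple-projections} inside $C(X)$ together with Lemma~\ref{lemma:areaprops}.\ref{lemma:areaprops_commute}, so it applies verbatim. Likewise, Theorem~\ref{thm:singlegeneration} only requires the trace on $B$, not a trace on the ambient algebra.

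The main thing to double-check is exactly this point: that no step in the proof of Theorem~\ref{thm:singlegeneration} uses a trace or faithfulness on the ambient algebra. Inspecting that proof, the matrix units, the $w_l$ and the comparison all live in $B$. Theorem~\ref{thm:shen} applies with its $B$ taken to be $C(X)$, which carries the faithful trace, and its $A$ arbitrary. Granting this, Theorem~\ref{thm:singlegeneration} yields single generation of $A\otimes_\alpha D$.
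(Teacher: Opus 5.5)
Your proposal is correct and is essentially the paper's proof. Like the paper, you apply Theorem~\ref{thm:singlegeneration} to $C(X)\otimes 1_D\subseteq B\otimes 1_D\subseteq A\otimes_\alpha D$ with the generating set $\{x_n\otimes 1_D\}_n\cup\{1_A\otimes d_m\}_m$, and invoke Corollary~\ref{cor:commuting_smallarea} for the elements $1_A\otimes d_m$. Your remark that Corollary~\ref{cor:commuting_smallarea} only uses $\tau$ on $C(X)$, so no trace on $A\otimes_\alpha D$ is needed, is a useful precision the paper leaves implicit. Re-inspecting the proof of Theorem~\ref{thm:singlegeneration} is unnecessary, since its statement assumes no trace on the ambient algebra. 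Your aside there is also slightly off: under strict comparison the $e^{(l)}_{ii}$ lie in $B$, not in $C(X)$, so Theorem~\ref{thm:shen} is applied with the simple algebra $B$ (not $C(X)$) as its traced subalgebra.
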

\begin{proof}
By assumption, $A$ contains $C(X)$ with $X$ the Cantor space, has a unique trace $\tau$, diffuse on $C(X)$, and is generated by a countable set $\set{a_n}_n$ of elements of small area w.r.t\ $(C(X),\tau)$. Fixing a countable set $\set{d_n}_n \subset D$ generating $D$, the elements $1_B\otimes d_n$ ($n\in\mathbb N$) have small area w.r.t.\ $(C(X) \otimes 1_D,\tau)$ by Corollary~\ref{cor:commuting_smallarea}, as they commute with $C(X)\otimes 1_D$. Hence, we can apply Theorem \ref{thm:singlegeneration} for $C(X) \otimes 1_D \subseteq B \otimes 1_D \subseteq A\otimes_\alpha D$ and the countable set
\begin{equation}
    \set{a_n\otimes 1_D}_n \cup \set{1\otimes d_n}_n 
\end{equation}
to find that $A$ is singly generated.
\end{proof}

\end{document}